\tikzstyle{dashed}=[dash pattern=on 3pt off 3pt]
\theoremstyle{plain}
\newtheorem{thm}{Theorem}[section]
\newtheorem{prop}[thm]{Proposition}
\newtheorem{coro}[thm]{Corollary}
\newtheorem{lem}[thm]{Lemma}
\theoremstyle{definition}
\newtheorem{dfn}[thm]{Definition}
\newtheorem{rmk}[thm]{Remark}
\providecommand{\lan}{\mathcal{L}}
\providecommand{\M}{\mathcal{M}}
\providecommand{\Mlin}{\mathcal{M}^{<}}
\providecommand{\cl}{\mathcal{C}}
\providecommand{\rat}{\mathbb{Q}}
\def\Ind#1#2{#1\setbox0=\hbox{$#1x$}\kern\wd0\hbox to 0pt{\hss$#1\mid$\hss}
\lower.9\ht0\hbox to 0pt{\hss$#1\smile$\hss}\kern\wd0}
\def\Notind#1#2{#1\setbox0=\hbox{$#1x$}\kern\wd0\hbox to 0pt{\mathchardef
\nn="3236\hss$#1\nn$\kern1.4\wd0\hss}\hbox to 0pt{\hss$#1\mid$\hss}\lower.9\ht0
\hbox to 0pt{\hss$#1\smile$\hss}\kern\wd0}
\def\ind{\mathop{\mathpalette\Ind{}}}
\newcommand{\indst}{\ind^{\star}}
\begin{document}

\title{Automorphism groups of linearly ordered homogeneous structures}

\author{Yibei Li}

\newcommand{\Addresses}{{
  \bigskip
  \footnotesize

  Yibei Li, \textsc{Department of Mathematics, Imperial College London,
    Huxley Building, 180 Queens Gate, London SW7 2AZ}\par\nopagebreak
  \textit{E-mail address}: \texttt{yibei.li16@imperial.ac.uk}

}}

\date{}

\maketitle

\section*{Abstract}
We apply results proved in \cite{li2019automorphism}, which is generalised from arguments in \cite{tentziegler2013isometry}, to the linear order expansions of non-trivial free homogeneous structures and the universal $n$-linear order for $n \geq 2$ and prove the simplicity of their automorphism groups.

\section{Introduction}

Given a relational language $\lan$, a countable $\lan$-structure $\M$ is \emph{homogeneous} if every partial isomorphism between finite substructures of $\M$ extends to an automorphism of $\M$. Fra{\"\i}ss{\'e}'s Theorem \cite{fraisse1953theorem} provides one way of constructing homogeneous structures by establishing a one-to-one correspondence between such structures and amalgamation classes. We call the homogeneous structure the \emph{Fra{\"\i}ss{\'e} limit} of the corresponding amalgamation class.


A special type of amalgamation classes is the free amalgamation class and we say a homogeneous structure is \emph{free} if it is the Fra{\"\i}ss{\'e} limit of a free amalgamation class. Examples include the random graph, the universal $K_n$-free graphs, etc. In \cite{macphersontent2011simplicity}, Macpherson and Tent proved the following theorem about free homogeneous structures using ideas and results from model theory and topological groups:

\begin{thm}\label{tztheorem}(\cite{macphersontent2011simplicity})
Let $\M$ be a countable free homogeneous relational structure. Suppose $Aut(\M) \neq Sym(\M)$ and $Aut(\M)$ is transitive on $\M$. Then $Aut(\M)$ is simple.
\end{thm}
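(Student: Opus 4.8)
The plan is to put on $\M$ a \emph{stationary independence relation} coming from free amalgamation and then run the Tent--Ziegler style commutator argument — in the form of the abstract results of \cite{li2019automorphism}, which generalise \cite{tentziegler2013isometry} — to show that the normal closure in $Aut(\M)$ of any nontrivial element is all of $Aut(\M)$. First I would define, for finite $A,B,C\subseteq\M$, the relation $A\ind_C B$ to hold exactly when the substructure $\langle ABC\rangle$ is the free amalgam of $\langle AC\rangle$ and $\langle BC\rangle$ over $\langle C\rangle$ — concretely, $A\cap B\subseteq\langle C\rangle$ and no basic relation of $\M$ holds on a tuple meeting both $A\setminus C$ and $B\setminus C$ — and then extend $\ind$ to arbitrary small sets by finite character. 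Using that the age of $\M$ is a free amalgamation class and that $\M$ is homogeneous, I would check the axioms of a stationary independence relation: invariance, monotonicity and symmetry are immediate from the definition; existence holds because the free amalgam of $\langle AC\rangle$ and $\langle BC\rangle$ over $\langle C\rangle$ lies in the age, so by homogeneity a copy of $A$ lying independently over $C$ from $B$ can be found inside $\M$; transitivity is associativity of free amalgamation; and stationarity — if $A\equiv_C A'$ with $A\ind_C B$ and $A'\ind_C B$ then $AB\equiv_C A'B$ — holds because freeness forces the identity on $\langle BC\rangle$ together with a fixed $C$-isomorphism $\langle AC\rangle\cong\langle A'C\rangle$ to assemble into an isomorphism $\langle ABC\rangle\cong\langle A'BC\rangle$, which homogeneity then extends to an automorphism of $\M$.

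Next I would turn the two hypotheses into the inputs that the abstract simplicity criterion needs. Transitivity of $Aut(\M)$ on $\M$ is assumed outright; combined with homogeneity it says that all elements of $\M$ realise the same $1$-type, so in particular no unary relation of $\M$ is non-trivial. The condition $Aut(\M)\neq Sym(\M)$ then forces some relation of arity at least $2$ to be non-trivial, and since $\M$ is a free Fra{\"\i}ss{\'e} limit this means $\ind$ is \emph{nondegenerate}: there are distinct $a,b\in\M$ that are dependent over $\emptyset$ (necessarily of the same type). Were this to fail, $\ind$ would be the trivial ``intersection'' relation of a pure set and $Sym(\M)$ would carry its normal subgroup of finitary permutations; so this is precisely the step at which the hypothesis $Aut(\M)\neq Sym(\M)$ is consumed.

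Then comes the core of the argument. Fix $1\neq g\in Aut(\M)$ and pick $a\in\M$ with $g(a)\neq a$. Call $t\in Aut(\M)$ \emph{generic} if $t(D)\ind_D D$ for every finite $D\subseteq\M$; such automorphisms exist by a back-and-forth using the existence axiom, and any two of them are conjugate by a back-and-forth using stationarity. The key reduction — again a back-and-forth with existence — is that an arbitrary $h\in Aut(\M)$ is a product of boundedly many generic automorphisms. It therefore remains to express one fixed generic automorphism $s$ as a bounded product of conjugates of $g^{\pm1}$. For this I would use nondegeneracy to build, from $g$ and a couple of well-chosen conjugates of $g$ with mutually independent supports, a commutator that agrees with $s$ on a prescribed independent configuration and hence, by stationarity and homogeneity, is conjugate to $s$ (or to a piece of it); concatenating a bounded number of such pieces writes $s$, and therefore every $h$, as a product of an absolute constant many conjugates of $g^{\pm1}$. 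Thus the normal closure of $g$ is $Aut(\M)$, so $Aut(\M)$ is simple — in fact uniformly simple.

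I expect the two genuine obstacles to be the stationarity clause in the first step and the commutator bookkeeping in the third. Free amalgamation \emph{a priori} only asserts that \emph{an} amalgam exists in the class, so some care is needed — using homogeneity and the fact that a free amalgam introduces no new algebraic closure — to see that the free amalgam is canonical over its two factors, which is what stationarity really demands. In the commutator step the difficulty is organisational: keeping the supports of the conjugates of $g$ mutually independent, reading off what the commutators do on a chosen independent configuration, and bounding the resulting word length. By contrast the nondegeneracy translation and the ``products of generics'' reduction are short once the independence relation is in hand.
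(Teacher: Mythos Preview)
This theorem is not proved in the paper at all: it is quoted, with attribution, as the Macpherson--Tent result from \cite{macphersontent2011simplicity}, and appears only as background motivation before the paper moves on to its own contributions on linearly ordered expansions. There is therefore no proof in the paper to compare your proposal against.

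That said, your outline is broadly the right shape for the original Macpherson--Tent / Tent--Ziegler argument: free amalgamation does give a stationary independence relation on $\M$, and the hypotheses $Aut(\M)\neq Sym(\M)$ and transitivity do supply the nondegeneracy input. Two cautions on the details, though. First, your definition of a ``generic'' automorphism as one satisfying $t(D)\ind_D D$ for every finite $D$ is vacuous as written, since $D\setminus D=\emptyset$ makes any $A\ind_D D$ hold trivially for the free-amalgamation relation; the notion you actually want is Tent--Ziegler's \emph{moves maximally}: for every finite $X$ and every type $p$ over $X$ there is a realisation $\bar a$ with $\bar a\ind_X t(\bar a)$. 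Second, the commutator paragraph is more of a wish than a plan --- the actual argument does not build $s$ directly from conjugates of $g$ with ``mutually independent supports'', but rather shows via a back-and-forth that for any nontrivial $g$ one can find $h$ so that a short word in conjugates of $g^{\pm1}$ (e.g.\ a commutator $[g,h]$ or iterated commutator) moves maximally, and only then invokes the abstract theorem. Your identification of stationarity and the commutator bookkeeping as the real work is accurate.
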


This is then generalised by Tent and Ziegler \cite{tentziegler2013isometry} to a homogeneous structure with a \emph{stationary independence relation} (see Definition \ref{swir}), which is weaker than a free homogeneous structure. They applied their result to the Urysohn space, which has a local stationary independence relation, but is not free. The author \cite{li2018simplicity} applied their result to some undirected graphs constructed by Cherlin \cite{cherlin1998classification}. However, we cannot apply the result to directed graphs as they do not satisfy the symmetry axiom of the stationary independence relation. Hence, the author \cite{li2019automorphism} then generalised the notion of a stationary indpendence relation to one without the symmetry axiom as the following.

\begin{dfn}\label{swir}
Let $\M$ be a homogeneous structure and suppose $A\ind _B C$ is a ternary relation between finite substructures $A,B,C$ of $\M$. We say that $\ind$ is a \emph{stationary weak independence relation} ($\mathbf{SWIR}$) if the following axioms are satisfied:
\begin{enumerate}[(i)]
\item Invariance: for any $g \in Aut(\M)$, if $A \ind _B C$, then $ gA \ind _{gB} gC$

\item Monotonicity: $A \ind _B CD \Rightarrow A \ind _B C$, $A \ind _{BC} D$

\hspace{2.4cm} $AD \ind _B C \Rightarrow A \ind _B C$, $D \ind _{AB} D$

\item Transitivity: $A \ind _B C$, $A \ind _{BC} D \Rightarrow A \ind _B CD $

\hspace{2.1cm} $A \ind _B C$, $D \ind _{AB} C \Rightarrow AD \ind _B C $

\item Existence: If $p$ is an $n$-type over $B$ and $C$ is a finite set, then $p$ has realisation $\bar{a}$, $\bar{a}'$ such that $\bar{a} \ind _B C$ and $C \ind _B \bar{a}'$.

\item Stationarity: If $\bar{a}$ and $\bar{a'}$ are $n$-tuples that have the same type over $B$ and $\bar{a} \ind _B C$, $\bar{a'} \ind _B C$, then $\bar{a}$ and $\bar{a'}$ have the same type over $BC$.

\hspace{2.3cm} If $\bar{a}$ and $\bar{a'}$ are $n$-tuples that have the same type over $B$ and $C \ind _B \bar{a}$, $C \ind _B \bar{a'}$, then $\bar{a}$ and $\bar{a'}$ have the same type over $BC$.

\end{enumerate}

If in addition, $\M$ satisfies symmetry, i.e. $A\ind _B C  \Rightarrow C \ind _B A$, then we say $\ind$ is a stationary independence relation.

\end{dfn}

In \cite{li2019automorphism}, the author generalised Tent and Ziegler's result to the following theorem using the following notion and applied it to some directed graphs constructed by Cherlin \cite{cherlin1998classification}. 

\begin{dfn}
We say that $g \in Aut(\M)$ \emph{moves almost $R$-maximally} if for any finite set $X$ and $n$-type $p$ over $X$, there is a realisation $\bar{a}$ of $p$ such that $\bar{a} \ind _{X} g\bar{a}$.

We say $g \in Aut(\M)$ \emph{moves almost $L$-maximally} if for any finite set $X$ and $n$-type $p$ over $X$, there is a realisation $\bar{a}$ of $p$ such that $g\bar{a}  \ind _{X} \bar{a}$.
\end{dfn}

\begin{thm}\label{swirthm}
Suppose $\M$ is a countable structure with a $\mathbf{SWIR}$ and $g \in Aut (\M)$ is such that $g$ moves almost $R$-maximally and $g^{-1}$ moves almost $L$-maximally. Then any element of $G$ is a product of conjugates of $g$.
\end{thm}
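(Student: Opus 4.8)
The plan is to adapt the back-and-forth machinery of Tent and Ziegler \cite{tentziegler2013isometry} to the asymmetric setting of Definition~\ref{swir}; write $G=Aut(\M)$. The absence of a symmetry axiom means every independence statement must be read with its two sides in a fixed order, and the hypothesis on $g$ and the separate hypothesis on $g^{-1}$ will be spent in complementary roles — one driving the ``forth'' steps of the construction, the other the ``back'' steps.

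First I would record a supply of generic automorphisms: for every finite $A\subseteq\M$ there is $f\in G$ fixing $A$ pointwise such that $f$ moves almost $R$-maximally and $f^{-1}$ moves almost $L$-maximally (indeed a strengthened local version of these conditions relative to $A$). Such an $f$ is produced by the standard back-and-forth — enumerate $\M$, and at stage $n$ extend a finite partial isomorphism so as to realise the independence $\bar a\ind_X g\bar a$ (dually $g^{-1}\bar a\ind_X\bar a$) demanded by the $n$-th requirement, using Existence to find the point and Stationarity to see that the eventual global bijection is well defined; Monotonicity and Transitivity are applied to amalgamate the independence facts carried over from earlier stages, now in their one-sided forms. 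Next I would show that an arbitrary $h\in G$ is a product of a bounded number of automorphisms each enjoying the same two-sided almost-maximality as $g$: the prototype is $h=h_1h_2$ with $h_2$ generic over a suitable finite set and $h_1=hh_2^{-1}$ then forced to be generic as well, obtained by running the two requirements in a single interleaved construction that alternates forth-steps governed by an $R$-type condition with back-steps governed by the dual $L$-type condition.

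The crux is the absorption lemma: if $f$ moves almost $R$-maximally and $f^{-1}$ moves almost $L$-maximally, then $f$ is a product of two conjugates of $g$; equivalently, there is $g_1$ conjugate to $g$ with $g_1^{-1}f$ conjugate to $g$. I would prove this by a single back-and-forth that builds, in tandem, a partial isomorphism $\sigma$ (with $g_1:=\sigma g\sigma^{-1}$ in the limit) together with a finite approximation to $g_1$, and a second partial isomorphism $\tau$, maintaining throughout that $g_1^{-1}f$ agrees with $\tau g\tau^{-1}$ on the part of $\M$ so far treated. At a typical stage one extends $\sigma$ or $\tau$ by one point, keeping it a partial $\lan$-isomorphism, respecting the intertwining relations as far as they are defined, and preserving an independence invariant of the shape ``the image built so far is independent, over the relevant base, from the remainder'' — of $R$-type on one side of the construction and of $L$-type on the other — so that the almost-$R$-maximality of $g$ and of $f$ can be invoked on a forth step and the almost-$L$-maximality of $g^{-1}$ and of $f^{-1}$ on a back step. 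Composing this with the preceding paragraph expresses every $h\in G$ as a product of conjugates of $g$, which is the assertion of Theorem~\ref{swirthm}.

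I expect the main obstacle to be exactly this asymmetry in the crux. Since $\ind$ need not be symmetric, the invariant maintained on the forth side is not interchangeable with the one maintained on the back side, and reconciling them at the close of each round — so that the global $\sigma$ (and $\tau$) is at once an $\lan$-isomorphism and an intertwiner of the relevant dynamics — is what forces the precise accounting of which of the two hypotheses is consumed where. Checking that the independence facts accumulated during the construction can always be re-amalgamated through the one-sided Monotonicity and Transitivity axioms of Definition~\ref{swir}, and that Stationarity then pins down the type of each newly placed point so the construction never blocks, is where essentially all of the technical work resides.
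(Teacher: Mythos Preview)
This paper does not contain a proof of Theorem~\ref{swirthm}. The theorem is imported wholesale from the author's earlier work \cite{li2019automorphism}; in the present paper it functions as a black box, invoked at the end of Sections~3 and~4 once an automorphism with the required almost-$R$- and almost-$L$-maximality has been exhibited. There is therefore nothing in this paper against which to compare your proposal.

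That said, your sketch is the expected shape for how the result is established in \cite{li2019automorphism}: the argument there is precisely an asymmetric reworking of the Tent--Ziegler machinery from \cite{tentziegler2013isometry}, tracking which side of $\ind$ each statement lives on and spending the $R$-maximality of $g$ and the $L$-maximality of $g^{-1}$ on opposite halves of the back-and-forth. Your identification of the crux --- that the forth and back invariants are no longer interchangeable and must be reconciled through the one-sided Monotonicity/Transitivity axioms --- is accurate. If you want to verify the details you should consult \cite{li2019automorphism} directly rather than this paper; here the theorem is only quoted.
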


In this paper, we will show that this theorem can be applied to the linear order expansions (see Definition \ref{linearorderexp}) of some structures and prove the following theorem:

\begin{thm}\label{main}
Let $\M$ be one of the following structures:
\begin{enumerate}[(i)]
\item the linear order expansion of a non-trivial free homogeneous structure
\item the universal $n$-linear order for $n\geq 2$.
\end{enumerate}
Then $Aut(\M)$ is simple.
\end{thm}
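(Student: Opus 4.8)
The plan is to equip each structure $\M$ with a stationary weak independence relation $\indod$ in the sense of Definition~\ref{swir}, and then to deduce simplicity of $G:=Aut(\M)$ from Theorem~\ref{swirthm} by the standard normal-closure argument: once we know that the normal closure $\langle g^{G}\rangle$ of every $1\neq g\in G$ contains an automorphism $h$ that moves almost $R$-maximally and with $h^{-1}$ moving almost $L$-maximally, Theorem~\ref{swirthm} gives $G=\langle h^{G}\rangle\subseteq\langle g^{G}\rangle$, so $G$ has no proper non-trivial normal subgroup.

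\emph{The independence relation.} Write the language of $\M$ as $\lan\cup\{<_{1},\dots,<_{m}\}$, where in case~(i) $m=1$ and $\lan$ is the relational language of the underlying non-trivial free structure $\N$, and in case~(ii) $\lan=\emptyset$ and $m=n$; in both cases the relevant class of finite structures is a Fra{\"\i}ss{\'e} class, with free amalgamation in $\lan$ and free interleaving of the orders. For finite $A,B,C\subseteq\M$ declare $A\indod_{B}C$ to hold iff (a) the $\lan$-reduct of $A\cup B\cup C$ is the free amalgam of $A\cup B$ and $B\cup C$ over $B$, and (b) for every $i\le m$ and every $<_{i}$-interval $I$ cut out by $B$, every element of $(A\setminus B)\cap I$ is $<_{i}$-below every element of $(C\setminus B)\cap I$. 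Thus $\indod$ records free $\lan$-amalgamation together with the canonical \emph{asymmetric} amalgamation of the orders, ``put $A$ entirely before $C$ inside each $B$-gap''; it is genuinely non-symmetric, which is exactly why the $\mathbf{SWIR}$ framework, rather than the symmetric one, is needed. Invariance is immediate. Monotonicity and transitivity reduce to the corresponding facts for free $\lan$-amalgamation plus an elementary check that refining $B$ to $B\cup C$ merely subdivides the $<_{i}$-intervals and that clause~(b) is inherited by sub-intervals. Existence follows by realising the $\lan$-reduct of the given type freely over $B$ and away from $C$ and then placing that realisation immediately $<_{i}$-before (resp.\ $<_{i}$-after) $C$ in the relevant intervals; the resulting finite configuration lies in the age of $\M$ because that age imposes no interaction between $\lan$ and the orders. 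Stationarity holds because the $\lan$-type over $B\cup C$ is pinned down by stationarity of free amalgamation, while each $<_{i}$-order on $A\cup B\cup C$ is pinned down by the orders on $A\cup B$ and $B\cup C$ together with clause~(b).

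\emph{Maximal movers and normal closures.} A back-and-forth construction using Existence and Stationarity produces a ``generic forward shift'' $h_{0}\in G$ that moves almost $R$-maximally and with $h_{0}^{-1}$ moving almost $L$-maximally: whenever a requirement $(X,p)$ is handled one feeds a fresh realisation $\bar a$ of $p$ into the domain and extends the partial map so that $\bar a\indod_{X}h_{0}\bar a$ (possible by Existence), and dually for $h_{0}^{-1}$. In particular such automorphisms exist, so by Theorem~\ref{swirthm} it suffices to show $\langle g^{G}\rangle$ contains one for \emph{every} $1\neq g\in G$. For this I would argue along the lines of \cite{macphersontent2011simplicity} and \cite{tentziegler2013isometry}: fix a finite tuple $\bar b$ with $g\bar b\neq\bar b$, use homogeneity of $\M$ to place many mutually $\indod$-independent conjugate copies of $(\bar b,g\bar b)$ throughout $\M$, and assemble, from commutators $[g,f]=g^{-1}(f^{-1}gf)$ of $g$ with suitably chosen $f\in G$ (each a product of two conjugates of $g^{\pm1}$, and of small effect when $f$ moves $\mathrm{supp}(g)$ off itself), an element that meets the finitely many $\indod$-requirements witnessing ``moves almost $R$-maximally'' on any prescribed finite part of $\M$; a bookkeeping back-and-forth then yields $h\in\langle g^{G}\rangle$ with the required property, and symmetrically for the $L$-side.

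\emph{The main obstacle.} The last step is the crux, and it is exactly where the hypotheses enter. For $\M=(\mathbb{Q},<)$ the step must fail, since $Aut(\mathbb{Q},<)$ is not simple: the normal closure of a non-identity element of bounded support again consists of bounded-support elements, and such an element never moves almost $R$-maximally (a type forcing the realisation into the fixed region of $g$ cannot be satisfied together with $\bar a\indod_{X}g\bar a$). What rescues cases~(i) and~(ii) is the absence of any ``box'' that conjugation could respect: for the universal $n$-linear order with $n\ge2$ the orders are mutually generic, so no set convex for one order is convex for another; and for the linear order expansion of a non-trivial free structure the generic $\lan$-relations crossing every $<$-interval rigidify it. Either way, no non-identity automorphism has a support that conjugation can keep confined, so copies of the configuration $(\bar b,g\bar b)$ may be pushed past every finite configuration. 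Turning this qualitative fact into the quantitative statement that every finite set of $\indod$-requirements can be met by a product of conjugates of $g^{\pm1}$---rather than merely ruling out the obvious normal subgroups---is the technical heart.
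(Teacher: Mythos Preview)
Your definition of $\indod$ is exactly the paper's (Theorem~\ref{swirfromswir} for case~(i), and its iteration for case~(ii)), and the global architecture---find inside the normal closure of each $g\neq 1$ an $h$ that moves almost $R$-maximally with $h^{-1}$ moving almost $L$-maximally, then invoke Theorem~\ref{swirthm}---is also the paper's.

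The gap is in how you propose to build $h$. You suggest importing the Macpherson--Tent/Tent--Ziegler commutator technique: push $\mathrm{supp}(g)$ off itself by some $f$ and use that $[g,f]$ then has ``small effect''. In a linearly ordered structure this heuristic has no traction: there is no localised support to push around. Indeed, the paper's Lemma~\ref{nofixedset} (and its analogue for $n\geq 2$, viewing $\M_n$ as the $<_n$-expansion of the non-trivial structure $\M_{n-1}$) shows that a non-trivial automorphism fixes no open $<$-interval, so $\mathrm{supp}(g)$ is dense everywhere and cannot be moved off itself. Your final paragraph names precisely this obstacle but offers only the qualitative observation that ``no box'' is respected by conjugation; that is motivation, not a mechanism, and you correctly flag that the step is left undone.

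The paper's route is different and order-specific. Rather than seeking a maximal mover directly, it first manufactures from any non-trivial $g$ a product of conjugates of $g^{\pm1}$ of a very concrete shape: an element with a \emph{single $+$-orbital}, i.e.\ $hx>x$ for all $x$. This is done by explicit back-and-forth constructions on the order (Propositions~\ref{oneorbitalcase1}, \ref{2orbitals}, \ref{oneorbitalcase2}, assembled in Theorem~\ref{oneorbitalgeneral}); in case~(ii) the argument is iterated over the $n$ orders (Theorem~\ref{universallinearorder}) to obtain $hx>_i x$ for all $i$ simultaneously. Only then is maximal movement addressed: in case~(i) a further commutator $[k,h]$ is shown, via another back-and-forth using Lemmas~\ref{move} and~\ref{deletesupport}, to move almost $R$- and $L$-maximally; in case~(ii) an element with $hx>_i x$ for every $i$ is shown to move maximally outright, by an induction on the coordinates of the realising tuple. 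So the technical heart is not an abstract commutator argument but the production of a fixed-point-free ``shift'' in the order(s); once that is in hand, the $\indod$-requirements are checked directly.
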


In Section 2, we prove some results about general linearly ordered structures, which will be used in the proofs later. In Section 3, we apply Theorem \ref{main} to the linear order expansions of free homogeneous structures. The same result was shown in \cite{ckt2019simplicity} recently using a somewhat similar approach. We then apply the theorem to the universal $n$-linear orders in Section 4. Note that when $n$=1, the structure is the dense linear order $(\rat, \leq)$ and its automorphism group was studied in \cite{holland1963lattice} and \cite{lloyd1964lattice}. We only provide an alternative approach in this case.

Throughout this paper, we let $\lan$ be a relational language and we will only consider countable homogeneous structure $\M$ whose age $\cl$ satisfies the strong amalgamation property. Equivalently, the algebraic closure in $\M$ is trivial, i.e. $acl(A)=A$ for all finite substructure $A$ of $\M$. A countable homogeneous structure $\M$ with age $\cl$ satisfies the \emph{Extension Property}, i.e. if $A \subseteq \M$ is finite and $f : A \rightarrow B$ is an embedding and $B\in \cl$, then there is an embedding $g : B \rightarrow \M$ such that $g(f(a)) = a$ for all $a \in A$.

\section{General linearly ordered structures}
We first define the \emph{linear order expansion} of a structure.

\begin{dfn}\label{linearorderexp}
Let $\cl$ be a strong amalgamation class and $\M$ be its Fra{\"\i}ss{\'e} limit. Let $\lan^{<}=\lan \cup \{<\}$, where the new relational symbol $<$ is interpreted as a (strict) linear order. For any $A \in \cl$, we can put a linear order on $A$. Then $A$ is an $\lan^{<}$-structures. We call it the \emph{linear order expansion} of $A$. Let $\cl^{<}$ be the class of all isomorphism types of such expansions. Then $\cl^{<}$ is an amalgamation class. We call its Fra{\"\i}ss{\'e} limit, denoted by $\M ^{<}$, the \emph{linear order expansion} of $\M$.
\end{dfn}

\begin{rmk}\label{expansionrmk}
\begin{enumerate}[(i)]
\item We can check that $\cl^{<}$ is an amalgamation class. It satisfies the amalgamation property because for any finite $B \subseteq A,C \subseteq \Mlin$, let $A^{\lan},B^{\lan},C^{\lan}$ be the $\lan$-reducts of $A,B,C$. Since $\cl$ is a strong amalgamation class, we can find $D^{\lan} \in \cl$ such that $A^{\lan},C^{\lan}$ can be embedded into $D^{\lan}$ such that $A^{\lan}\cap C^{\lan}=B^{\lan}$. Since $\cl^{\lan}$ contains all isomorphism types of the linear order expansion of $D^{\lan}$, we can find $D \in \cl^{\lan}$ satisfying that for $a \in A \setminus B, c \in C \setminus B$, $a<c$ if and only if there exists $b\in B$ such that $a<b<c$. Then $D$ is the amalgamation of $A,C$ over $B$.
\item Also note that $\Mlin$ satisfies the theory of the dense linear order. Hence, we can identify elements of $\Mlin$ as elements of $\rat$ and it makes sense to have intervals of $\Mlin$.
\item \cite{ckt2019simplicity} defined a more general notion. Given disjoint relational languages $\lan_1, \lan_2$, suppose $\M_i$ is a homogeneous $\lan_i$-structure for $i=1,2$ and $\M_1,\M_2$ have the same underlying set $\M$. Let $\lan=\lan_1\cup \lan_2$. Then an $\lan$-structure $\M^{\star}$ on $\M$ is called a \emph{free fusion} of $\M_1$ and $\M_2$ if
\begin{enumerate}[(a)]
\item the $\lan_i$-reduct of $\M^{\star}$ is $\M_i$ for $i=1,2$ and 
\item for every non-algebraic $\lan_i$-type $p_i$ for $i = 1, 2$, their union $p_1 \cup p_2$ is realised in $\M^{\star}$.
\end{enumerate}
Then, by definition and part (iv) of this remark, the linear order expansion of $\M$ is the free fusion of $\M$ and the dense linear order. In \cite{lp2014permutations}, $\Mlin$ is called a \emph{superposition} of $\M$ and the dense linear order.
\end{enumerate}
\end{rmk}

In order to prove the next theorem, we define the following notations.

\begin{dfn}
Let $\lan$ be a relational language and $\lan^{<}=\lan \cup \{<\}$. Let $\M$ be an $\lan$-structure and $\Mlin$ its linear order expansion. For $X \subseteq \Mlin$, let $tp(\bar{x}/X)$ be a $n$-type over $X$. We define $tp^{\lan}(\bar{x}/X)$ to be the set of all $\lan$-formulas satisfied by $\bar{x}$ with parameters in $X$ and $tp^{<}(\bar{x}/X)$ to be the set of all $\{<\}$-formulas satisfied by $\bar{x}$ with parameters in $X$. We call them \emph{$\lan$-type} and \emph{$\{<\}$-type} respectively.
\end{dfn}

\begin{rmk}\label{septypermk}
It is straightforward to see $tp(\bar{a}/X)=tp(\bar{a}'/X)$ if and only if $tp^{<}(\bar{a}/X)=tp^{<}(\bar{a}'/X)$ and $tp^{\lan}(\bar{a}/X)=tp^{\lan}(\bar{a}'/X)$. Also note that for any $n$-$\lan$-type $p(\bar{x})$ and any $n$-$\{<\}$-type $q(\bar{x})$, $p(x) \cup q(x)$ is consistent.
\end{rmk}

\begin{thm}\label{swirfromswir}
Let $\lan$ be a relational language and $\lan^{<}=\lan \cup \{<\}$. Let $\M$ be a countable homogeneous structure with a $\mathbf{SWIR}$ $\ind^{\star}$. For any finite subsets $A,B,C$ of its linear order expansion $\M ^{<}$, we define $A \ind _B C$ if $A \ind^{\star}_B C$ and for any $a \in A \setminus B, c\in C \setminus B$, there exists $b\in B$ such that $a<b<c$. Then $\ind$ is a $\mathbf{SWIR}$ on $\M ^{<}$.
\end{thm}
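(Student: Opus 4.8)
The plan is to verify the five $\mathbf{SWIR}$ axioms for $\ind$ one at a time, in each case splitting the condition ``$A\ind_B C$'' into its two conjuncts — the $\ind^\star$ part and the order-separation part ``for all $a\in A\setminus B$, $c\in C\setminus B$ there is $b\in B$ with $a<b<c$'' — and checking that each conjunct is inherited. Throughout I will use Remark \ref{septypermk}: a type over $X$ in $\M^<$ is determined by its $\lan$-part and its $<$-part, and any $\lan$-type is consistent with any $<$-type. I will also use that $\M^<$ is itself homogeneous with the strong amalgamation property (Remark \ref{expansionrmk}(i)), so the background conventions from the end of Section 1 apply to $\M^<$.

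For Invariance, note that automorphisms of $\M^<$ are in particular automorphisms of the $\lan$-reduct $\M$, so they preserve $\ind^\star$ by its Invariance; and they preserve $<$, so they preserve the order-separation clause. For Monotonicity and Transitivity, the $\ind^\star$-coordinate is handled directly by the corresponding axiom for $\ind^\star$, so the work is purely combinatorial bookkeeping about the order-separation clause: e.g. for $A\ind_B CD\Rightarrow A\ind_{BC}D$ I must check that if every $a\in A\setminus B$ is separated by some element of $B$ from every element of $(C\cup D)\setminus B$, then in particular every $a\in A\setminus(B\cup C)$ is separated by an element of $B\cup C$ from every element of $D\setminus(B\cup C)$ — which is immediate since $B\subseteq B\cup C$. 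Transitivity is the one place requiring a genuine (if short) argument: from ``$a<b<c$ for some $b\in B$'' and ``$a<b'<d$ for some $b'\in B\cup C$'' I need ``$a<b''<x$ for some $b''\in B$ and all $x\in(C\cup D)\setminus B$'', and for $x\in D$ this needs the density of the linear order together with the fact that the witness $b'$ lies in $B\cup C$: if $b'\in C$ then $a<b'$ and $a$ is separated from $b'$ by some $b''\in B$, and $b''<b'<d$ gives $b''<d$; if $b'\in B$ we are done. (Here is where Remark \ref{expansionrmk}(ii), that $\M^<$ is a dense linear order, is used.)

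The two substantive axioms are Existence and Stationarity, and I expect Existence to be the main obstacle. For Stationarity: suppose $\bar a,\bar a'$ have the same type over $B$ in $\M^<$ and $\bar a\ind_B C$, $\bar a'\ind_B C$. Then they have the same $\lan$-type over $B$, so by Stationarity of $\ind^\star$ they have the same $\lan$-type over $BC$; and they have the same $<$-type over $B$, and the order-separation clauses (which are the same for $\bar a$ and $\bar a'$ since their $<$-types over $B$ agree) pin down the $<$-type over $BC$ — because where each $a_i$ sits relative to each $c\in C\setminus B$ is forced by which $b\in B$ separates them, which in turn is determined by the $<$-type of $\bar a$ over $B$ together with the $<$-type of $C$ over $B$. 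Combining via Remark \ref{septypermk} gives equality of the full type over $BC$. The $L$-sided version is symmetric.

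For Existence: given an $n$-type $p$ over $B$ in $\M^<$ and a finite $C$, I need a realisation $\bar a$ with $\bar a\ind_B C$, i.e. simultaneously $\bar a\ind^\star_B C$ in the $\lan$-sense and each $a_i\notin B$ separated from each $c\in C\setminus B$ by some element of $B$. The strategy is: apply Existence for $\ind^\star$ to the $\lan$-reduct $p^\lan$ of $p$ and the set $C^\lan$ to get a realisation $\bar a$ of $p^\lan$ with $\bar a\ind^\star_B C$; this determines $tp^\lan(\bar a/BC)$. Independently, build a $<$-type $q$ for $\bar a$ over $BC$ that extends the $<$-type $p^<$ of $p$ over $B$ and places each new $a_i$ in the same $B$-interval as dictated by $p^<$ but chosen (using density, Remark \ref{expansionrmk}(ii)) so close to a suitable endpoint $b\in B$ that $a_i$ lies on the correct side of every $c\in C\setminus B$ in that interval, thereby satisfying order-separation; the point is that $p^<$ already commits each $a_i$ to a half-open or open $B$-interval, and within that interval the elements of $C\setminus B$ only finitely subdivide it, so there is room. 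Then $tp^\lan(\bar a/BC)\cup q$ is consistent by Remark \ref{septypermk}, is realised in $\M^<$ by homogeneity/Extension Property, and the realisation is the desired $\bar a$; since $q\restriction B = p^<$ and $tp^\lan(\bar a/B)=p^\lan$, this realisation indeed realises $p$. The $C\ind_B\bar a'$ half is obtained the same way using the other clause of Existence for $\ind^\star$ and putting the new elements on the opposite side. The delicate point — and the one I'd want to write out carefully — is checking that the order-separation requirement never conflicts with the interval constraints already imposed by $p^<$, i.e. that when $p^<$ forces some $a_i$ to be, say, the maximum of $\bar a\cup B$, there is still a point of $\M^<$ below the relevant elements of $C$ and above all of $B$; this is exactly where density of the order is essential and where one must treat the ``boundary'' cases (an $a_i$ below all of $B$, or above all of $B$, or equal in $<$-type to some $b\in B$ — though the last cannot happen as $<$ is a strict linear order on finite sets, so $a_i\ne b$) with a little care.
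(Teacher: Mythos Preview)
Your proposal is correct and follows essentially the same axiom-by-axiom verification as the paper, splitting each condition into its $\indst$-part (inherited from $\M$) and its order-separation part (checked combinatorially). Two minor remarks. First, your argument for Transitivity does not in fact use density of $<$: the case analysis you give (if the witness $b'\in B\cup C$ lies in $C\setminus B$, apply $A\ind_B C$ to find $b''\in B$ with $a<b''<b'<d$) is precisely the paper's argument and is purely order-theoretic, so the parenthetical citing Remark~\ref{expansionrmk}(ii) should be dropped. Second, your treatment of Existence is actually more explicit than the paper's: you invoke Existence for $\indst$ to determine $tp^{\lan}(\bar a/BC)$, construct the $<$-type separately using density, and recombine via Remark~\ref{septypermk}; the paper instead appeals in one step to the specific amalgamation of Remark~\ref{expansionrmk}(i) together with the Extension Property, leaving the $\indst$-step implicit. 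Both routes reach the same conclusion.
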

\begin{proof}

\underline{Monotonicity:} Suppose $A\ind _B CD$. Then $A \ind^{\star}_B CD $. By Monotonicy of $\ind^{\star}$, we have $A \indst _B C$ and $A \indst_{BC} D$. We also have that for $a\in A\setminus B, c \in C \setminus B$, there exists $b\in B$ such that $a<b<c$. For any $a \in A \setminus BC, d \in D\setminus BC$, there exists $b\in B$ such that $a <b<d$. Therefore, we have $A \ind _B C$ and $A \ind_{BC} D$.

\underline{Transitivity:} Suppose $A \ind _B C$ and $A \ind_{BC} D$. By Transitivity of $\indst$, we have $A\indst _B CD$. For any $a\in A \setminus B$, $d \in D\setminus B$ such that $a<d$, there exists $b \in B$ such that $a<b<d$ or there exists $c \in C\setminus B$ such that $a<c<d$. In the latter case, there exists $b\in B$ such that $a<b<c<d$. Hence, in both cases, there exists $b\in B$ such that $a <b<d$. Therefore, we have $A\ind _B CD$.

\underline{Stationarity:} Suppose $tp(\bar{a}/B)=tp(\bar{a}'/B)$ and $\bar{a} \ind _B C$, $\bar{a}' \ind_B C$. Then $tp^{\lan}(\bar{a}/B)=tp^{\lan}(\bar{a}'/B) $ and $\bar{a} \ind^{\star}_B C$, $\bar{a}' \ind^{\star}_B C$. By Stationarity of $\ind^{\star}$, we have $tp^{\lan}(\bar{a}/BC)=tp^{\lan}(\bar{a}'/BC)$. To see that $tp^{<}(\bar{a}/BC)=tp^{<}(\bar{a}'/BC)$, suppose otherwise. Then, there exists $a_i \in \bar{a}, a'_i \in \bar{a}'$  and $c \in C \setminus B$ such that $a_i<c<a'_i$. Since $\bar{a} \ind_B C$, there exists $b\in B$ such that $a_i<b<c<a'_i$, which contradicts $tp(\bar{a}/B)=tp(\bar{a}'/B)$. Therefore, we have $tp^{<}(\bar{a}/BC)=tp^{<}(\bar{a}'/BC)$ and thus, $tp(\bar{a}/BC)=tp(\bar{a}'/BC)$.

\underline{Existence:} Let $p=tp(x/B)$ be an $n$-type. Since by part (i) of Remark \ref{expansionrmk}, we can find an amalgamation of $\bar{a}'B$ and $BC$ over $B$ where $\bar{a}'$ realises $p$ and for $a_i \in \bar{a}' \setminus B, c \in C \setminus B$, we have $a<c$ if and only if there exists $b\in B$ such that $a<b<c$. We can embed $BC$ into this amalgamation. Then by the Extension Property, we can embed this amalgamation back to $\Mlin$. Hence, we can find $\bar{a} \in \Mlin$ realising $p$ such that $\bar{a} \ind _B C$.

Invariance is straightforward to see. We can prove the remaining by swapping the sides of $\ind$.
\end{proof}

In order to prove our main theorem, we define the following notions. 

\begin{dfn}
We say an automorphism $g$ of $\Mlin$ is \emph{right-bounded} if there exist $a \in \Mlin$ such that $gb =b$ for any $b > a$. It is \emph{left-bounded} if there exist $a \in \Mlin$ such that $gb =b$ for any $b < a$. It is \emph{unbounded} if it is neither left-bounded nor right-bounded. 
\end{dfn}


\begin{dfn}
Let $S$ be a subset of $\Mlin$. We define the \emph{convexification} of $S$ to be the set $\{ a \in \Mlin| \exists s_1,s_2 \in S: s_1 \leq a \leq s_2 \}$. Let $g$ be an automorphism of $\Mlin$. An \emph{orbital} of $g$ containing some $a \in \Mlin$ is the convexification of $\{g^n a| n \in \mathbb{Z} \}$. We say an orbital $I$ is \emph{unbounded above} if for any $a \in \Mlin$, there exists $b >a$ such that $b \in I$ and we say it is \emph{unbounded below} if for any $a \in \Mlin$, there exists $b <a$ such that $b \in I$.
\end{dfn}
Note that if $ga >a$, then $gb>b$ for all $b$ in the orbital of $g$ containing $a$. Similarly, if $ga <a$, then $gb<b$ for all $b$ in the orbital of $g$ containing $a$. So we can define the following:
\begin{dfn}
\begin{enumerate}[(i)]
\item We say an orbital $I$ is a \emph{$+$-orbital} of $g$ if $ga>a$ for all $a\in I$ and is a \emph{$-$-orbital} of $g$ if $ga<a$ for all $a\in I$.
\item For $g \in Aut(\Mlin)$, we say $g$ \emph{has a single orbital} if there exists $a \in \Mlin$ such that the sequence $(g^ia)_{i\in \mathbb{Z}}$ is unbounded above and below.  
\end{enumerate}
\end{dfn}

\begin{rmk}\label{orbitalrmk}
\begin{enumerate}[(i)]
\item If $g \in Aut(\Mlin)$ has $+$-orbital unbounded above (or below), then for any $b$ in the orbital, $(g^ib)_{i\in \mathbb{Z}}$ is a sequence unbounded above (or below). 
\item Similarly if $g \in Aut(\Mlin)$ has $-$-orbital unbounded above (or below), then for any $b$ in the orbital, $(g^ib)_{i\in \mathbb{Z}}$ is a sequence unbounded above (or below). 
\item Thus, if $g \in Aut(\Mlin)$ has a single $+$-orbital or a single $-$-orbital, then for any $b \in Aut(\Mlin)$, $(g^ib)_{i\in \mathbb{Z}}$ is a sequence unbounded above and below.
\end{enumerate}
\end{rmk}

\begin{lem}\label{epininterval}
Let $\M$ be a countable homogeneous $\lan$-structure and $\Mlin$ be its linear order expansion. Let $p(x)$ be a 1-tpye over some finite set $X\subseteq \Mlin$ and $b,c \in \Mlin$. Suppose $p^{<} (x)  \cup \{ b<x<c \}$ is consistent. Then there exists $a\in (b,c)$ realising $p(x)$.
\end{lem}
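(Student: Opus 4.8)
The plan is to produce a single finite $\lan^{<}$-structure lying in $\cl^{<}$ that witnesses the conclusion and then to transport it into $\Mlin$ by the Extension Property. First I would dispose of the degenerate case: if $p^{<}(x)$ contains a formula $x=x_0$ with $x_0\in X$, then every realisation of $p$ equals $x_0$, the consistency hypothesis forces $b<x_0<c$, and $a=x_0$ works. So assume from now on that $p$ is non-algebraic. Then $p^{<}(x)$ partitions $X$ into $X_{<}=\{x_0\in X:(x<x_0)\in p\}$ and $X_{>}=\{x_0\in X:(x_0<x)\in p\}$, and $p^{\lan}(x)$ is a non-algebraic $1$-$\lan$-type over $X$, hence has infinitely many realisations in $\M$ (the $\lan$-reduct of $\Mlin$).

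Next I would pick a realisation $a\in\M$ of $p^{\lan}(x)$ over $X$ with $a\notin\{b,c\}$, and let $D$ be the $\lan$-structure on $X\cup\{b,c,a\}$ induced from $\M$. Turn $D$ into an $\lan^{<}$-structure by keeping the order induced from $\Mlin$ on $X\cup\{b,c\}$ and re-positioning $a$: declare $y<a$ for each $y\in X_{>}\cup\{b\}$ and $a<y$ for each $y\in X_{<}\cup\{c\}$. This is a linear order on $D$ precisely when every element of $X_{>}\cup\{b\}$ lies below every element of $X_{<}\cup\{c\}$ in $\Mlin$; consistency of $p^{<}(x)$ already gives $x_0<x_1$ for $x_0\in X_{>}$, $x_1\in X_{<}$, and the hypothesis that $p^{<}(x)\cup\{b<x<c\}$ is consistent supplies exactly the remaining inequalities $b<c$, $b<x_1$ for $x_1\in X_{<}$, and $x_0<c$ for $x_0\in X_{>}$. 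Since the $\lan$-reduct of $D$ is a finite substructure of $\M$, this ordered $D$ is isomorphic to a member of $\cl^{<}$, and the inclusion $X\cup\{b,c\}\hookrightarrow D$ is an embedding of $\lan^{<}$-structures, as $D$ and $\Mlin$ induce the same order and the same $\lan$-structure on $X\cup\{b,c\}$.

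Finally, the Extension Property for $\Mlin$ (whose age is $\cl^{<}$) yields an embedding $g\colon D\to\Mlin$ fixing $X\cup\{b,c\}$ pointwise. Then $b<g(a)<c$, and since $g$ is an embedding of $\lan^{<}$-structures fixing $X$, the point $g(a)$ realises $p^{\lan}(x)$ over $X$ (because $a$ does in $D$) and realises $p^{<}(x)$ over $X$ (by the way $a$ was positioned in $D$). By Remark~\ref{septypermk} it follows that $g(a)$ realises $p(x)$, which is what we want.

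I expect the only real content to be the order-consistency check in the second paragraph, and even that reduces, once one unwinds what it means for $p^{<}(x)\cup\{b<x<c\}$ to be consistent in a dense linear order, to the conjunction of the finitely many strict inequalities listed there; the rest is routine use of the realisability of types in $\M$ and of the Extension Property for $\Mlin$. A slightly slicker variant would avoid naming $D$: extend $p^{<}(x)$ to a complete $\{<\}$-type $q(x)$ over $X\cup\{b,c\}$ with $(b<x<c)\in q$, using the hypothesis; extend $p^{\lan}(x)$ to an $\lan$-type $r(x)$ over $X\cup\{b,c\}$; and invoke the last sentence of Remark~\ref{septypermk} to see that $q(x)\cup r(x)$ is consistent, hence realised in $\Mlin$ by a point of $(b,c)$ realising $p$.
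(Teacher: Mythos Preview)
Your proposal is correct and follows essentially the same approach as the paper: build a finite $\lan^{<}$-structure in $\cl^{<}$ extending $bcX$ in which some $a$ realises $p(x)$ with $b<a<c$, then apply the Extension Property of $\Mlin$. The paper's proof is a terse three-line version of exactly this; you have simply unpacked the construction of the witnessing structure $D$ (the choice of an $\lan$-realisation of $p^{\lan}$ and the verification that the prescribed order on $D$ is consistent) in more detail than the paper bothers to give.
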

\begin{proof}
The lemma follows from the Extension Property. Since $p^{<}(x) \cup \{ b<x<c \}$ is consistent, we can embed $bcX$ into $abcX$ such that $a$ satisfies $p(x)$ and $b<a<c$. Then by the Extension Property, we can embed $abcX$ into $\Mlin$. Hence, there exists $a\in (b,c)$ realising $p(x)$.
\end{proof}

\begin{coro}\label{exofreal}
Let $\M$ be a countable homogeneous $\lan$-structure and $\Mlin$ be its linear order expansion. Let $p(x)$ be a 1-tpye over some finite set $X\subseteq \Mlin$ and $b \in \Mlin$. Let $g\in Aut(\Mlin)$.
\begin{enumerate}[(i)]
\item If for all $y \in \Mlin$, there exists $z>y$ such that $gz>z$. Suppose $p^{<} (x) \vdash \{ x>x' :x' \in X\}$. Then for any $y \in \Mlin$, there exists $a>y$ such that $a$ realises $p(x)$ and $ga>a$. 
\item If for all $y \in \Mlin$, there exists $z>y$ such that $gz<z$. Suppose $p^{<} (x) \vdash \{ x>x' :x' \in X\}$. Then for any $y \in \Mlin$, there exists $a>y$ such that $a$ realises $p(x)$ and $ga<a$. 
\item If for all $y \in \Mlin$, there exists $z<y$ such that $gz>z$. Suppose $p^{<} (x) \vdash \{ x<x' :x' \in X\}$, then for any $y \in \Mlin$, there exists $a<y$ such that $a$ realises $p(x)$ and $ga>a$. 
\item If for all $y \in \Mlin$, there exists $z<y$ such that $gz<z$. Suppose $p^{<} (x) \vdash \{ x<x' :x' \in X\}$, then for any $y \in \Mlin$, there exists $a<y$ such that $a$ realises $p(x)$ and $ga<a$. 
\end{enumerate}
\end{coro}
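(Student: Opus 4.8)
The plan is to reduce each part to Lemma~\ref{epininterval} by locating an open interval of $\Mlin$ that lies entirely inside a single orbital of $g$ of the appropriate sign and on the correct side of both $y$ and $X$; I would prove (i) in full and then indicate the routine modifications for (ii)--(iv).

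So fix $y \in \Mlin$ and assume the hypothesis of (i). Since $\{y\}\cup X$ is finite, put $y_0 = \max(\{y\}\cup X)$ and apply the hypothesis to $y_0$ to obtain $z > y_0$ with $gz > z$. The key point is that the whole interval $(z,gz)$ lies inside a single $+$-orbital of $g$: the orbital of $g$ through $z$ is the convexification of $\{g^n z : n \in \mathbb{Z}\}$, and this set contains $z = g^0 z$ and $gz = g^1 z$, so every $a$ with $z \le a \le gz$ belongs to it; since $gz > z$, that orbital is a $+$-orbital (by the Note preceding the definition of $+$-orbital), so $ga > a$ for every such $a$. Next, because $p^{<}(x) \vdash \{x > x' : x' \in X\}$ and $X$ is linearly ordered, $p^{<}(x)$ is just the $\{<\}$-type asserting that $x$ lies above every element of $X$; as $z > y_0 \ge x'$ for all $x' \in X$ and $z < gz$, the set $p^{<}(x) \cup \{z < x < gz\}$ is consistent, so Lemma~\ref{epininterval} (with parameter set $X$ and $b = z$, $c = gz$) produces $a \in (z,gz)$ realising $p(x)$. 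By the previous sentence $ga > a$, and $a > z > y_0 \ge y$ gives $a > y$, which proves (i).

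Parts (ii)--(iv) run the same way, with the obvious reflections; the only extra wrinkle is that sometimes the natural interval must be trimmed so as not to spill past $y_0$. For (ii) one again sets $y_0 = \max(\{y\}\cup X)$, takes $z > y_0$ with $gz < z$, and uses the interval $(\max(gz,y_0),\,z)$: it is nonempty, it lies above $X$ and to the right of $y$, and it is contained in $[gz,z]$, hence in the $-$-orbital of $g$ through $z$. For (iii) and (iv) one instead sets $y_0 = \min(\{y\}\cup X)$, uses $z < y_0$ and $p^{<}(x) \vdash \{x < x' : x' \in X\}$, and takes the interval $(z,\min(gz,y_0))$ (a $+$-orbital, case (iii)) or $(gz,z)$ (a $-$-orbital, case (iv)), which lies below $X$ and to the left of $y$. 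In each case Lemma~\ref{epininterval} then drops a realisation of $p$ into the interval, and that realisation is moved in the required direction because the interval meets only one orbital of $g$.

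I do not expect a genuine obstacle here: the content is entirely the combination of Lemma~\ref{epininterval} with the convexity of orbitals, and the only thing to be careful about is the order bookkeeping that simultaneously keeps each chosen interval nonempty, on the correct side of $y$, on the correct side of $X$, and inside $[z,gz]$ or $[gz,z]$.
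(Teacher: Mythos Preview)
Your proof is correct and follows essentially the same route as the paper: pick a point $z$ beyond both $y$ and $X$ that $g$ moves in the required direction, then apply Lemma~\ref{epininterval} to the interval between $z$ and $gz$ (trimmed where necessary so as to stay on the correct side of $y$). The paper's version is marginally more direct in that, for (i), it concludes $ga>a$ from the single chain $ga>gz>a$ rather than via the orbital language, but the content is identical.
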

\begin{proof}
To prove (i), by the assumption on $g$, we can find $b > max\{ y, x' : x'\in X\}$ such that $gb>b$. Then $p^{<} (x)  \cup \{ b<x<gb \}$ is consistent. So, by the previous lemma, we have $a \in (b,gb)$ realising $p(x)$. We also have that $ga>gb>a$. We can prove (ii)-(iv) in the same way.
\end{proof}

\begin{prop}\label{oneorbitalcase1}
Let $g \in Aut(\Mlin)$ be such that for any $x\in \Mlin$, there exists $y<x$ and $z>x$ such that $gy>y$ and $gz>z$. Then there exist $h \in Aut(\Mlin)$ such that $gh^{-1}gh$ has a single $+$-orbital.
\end{prop}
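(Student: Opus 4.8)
The plan is to build $h$ as the union of an increasing chain of finite partial $\lan^{<}$-isomorphisms of $\Mlin$, arranging that $f:=gh^{-1}gh$ moves every point of $\Mlin$ strictly upward and that some point has an $f$-orbit cofinal both above and below; by Remark \ref{orbitalrmk} the second condition then upgrades ``$f$ is positive everywhere'' to ``$f$ has a single $+$-orbital''. (Equivalently: writing $g_{1}=hgh^{-1}$ one has $hfh^{-1}=(hgh^{-1})g=g_{1}g$, so $f$ is conjugate to the product of $g$ with a conjugate of itself, and one is really choosing a conjugate of $g$ whose product with $g$ has a single $+$-orbital; but it is just as convenient to track $f$ itself.) The content of the hypothesis is that the set of points $g$ moves up is cofinal in $\Mlin$ in both directions, and this is precisely what Corollary \ref{exofreal} turns into: over a finite parameter set, any $1$-type asserting ``$x$ lies above (resp.\ below) all the parameters'' has, beyond any prescribed bound, a realisation that $g$ moves up; Lemma \ref{epininterval} gives the companion statement for realising a prescribed $1$-type inside a prescribed open interval.

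Fix an enumeration $\Mlin=\{x_{n}:n\in\mathbb{N}\}$ and interleave three families of demands. (a) At stage $n$ throw $x_{n}$ into both the domain and the range of the current partial map, so that the limit $h$ is a bijection and hence an automorphism. (b) Commit, over the course of the construction, to points $\cdots<c_{-1}<c_{0}<c_{1}<\cdots$ that are cofinal both ways, together with the finitely many extra values of $h$ needed to force $f(c_{i})=c_{i+1}$ for every $i$ (unwinding $f$, this means choosing $h(c_{i})$, then taking $g$ of it, then choosing a point mapped by $h$ to that, then arranging $g$ of the latter to be $c_{i+1}$); every freely chosen point here is chosen, by Corollary \ref{exofreal} and Lemma \ref{epininterval}, to be moved up by $g$ and placed far enough out that no later demand is compromised. (c) At stage $n$ force $f(x_{n})>x_{n}$: extend $h$ at $x_{n}$ and at the single further point $v$ needed to evaluate $f(x_{n})=g(v)$ (so $h(v)=g(h(x_{n}))$), taking $v$ to be a point that $g$ moves up and that lies above $x_{n}$, which is possible by Corollary \ref{exofreal}(i) because such points are cofinal above; then $f(x_{n})=g(v)>v>x_{n}$.

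The routine part is to check that the resulting $h$ is an automorphism of $\Mlin$ and that $f=gh^{-1}gh$ is positive at every point with $c_{0}$ having a cofinal orbit, hence has a single $+$-orbital. The step I expect to be the real obstacle is the scheduling of the back-and-forth so that demand (c) can always be met: at the stage where $f(x_{n})>x_{n}$ is forced, $h$ has already been committed on a finite set (including parts of the sequence $(c_{i})$ and the auxiliary points from earlier stages), and one must guarantee that the new point $v$ can still be slotted in above $x_{n}$ inside a part of $\Mlin$ that contains points $g$ moves up; this constrains where the $c_{i}$ and the earlier auxiliary points may lie relative to $x_{n}$, and when $g$ has intervals of fixed points or bounded $-$-orbitals one must also ensure $v$ is not trapped in such a region. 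Balancing ``enough of $h$ already committed'' (for cofinality and surjectivity) against ``enough freedom left'' (for the next positivity step) is the crux; by contrast the purely $\lan$-theoretic part causes no trouble, since by Remark \ref{septypermk} a type over a finite set is just an $\lan$-type together with an independent $\{<\}$-type, and Lemma \ref{epininterval} lets us realise such types wherever the order allows.
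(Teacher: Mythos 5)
Your plan coincides with the paper's proof in its essential part: build $h$ by back-and-forth so that the limit is a bijection (your demand (a)) and so that a bi-cofinal sequence $\cdots<a_{-1}<a_0<a_1<\cdots$ satisfies $gh^{-1}gh\,a_i=a_{i+1}$ (your demand (b)), with every point that must be moved up by $g$ chosen via Corollary \ref{exofreal}. The step you single out as the unresolved crux, however, is your demand (c), and there the worry you raise is real: to force $f(x_n)>x_n$ directly you need the auxiliary point $v$ with $h(v)=g(h(x_n))$ to realise a type whose $\{<\}$-part confines it to a specific slot of the current finite configuration, and since the hypothesis only makes the set $\{a: ga>a\}$ cofinal in both directions (not dense), that slot may contain no point moved up by $g$. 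Corollary \ref{exofreal} only applies to types asserting that $x$ lies above (or below) all the parameters, which is why the paper is careful to place every $g$-constrained point ($b_n$, $c_n$, $b_{-n-1}$, $a_{-n-1}$) at the top or bottom of the configuration built so far.

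The resolution is that demand (c) is redundant and should simply be dropped. Once the sequence $(a_i)$ is cofinal both ways with $fa_i=a_{i+1}>a_i$, the orbital of $f$ containing $a_0$ is the convexification of $\{f^n a_0\}$, hence all of $\Mlin$, and since $fa_0>a_0$ it is a $+$-orbital; positivity of $f$ at every point (in particular at every $x_n$) is then automatic from the observation preceding the definition of $+$-orbital, rather than something to be scheduled. The points $x_n$ need only be absorbed into the domain and range of $h$, and for that the images $y_n$ and preimages $z_n$ carry no condition involving $g$, so Lemma \ref{epininterval} (equivalently the Extension Property) realises the transported types wherever the order dictates. With (c) removed, your balancing problem between ``committed'' and ``free'' disappears, and the construction closes exactly as in the paper.
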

\begin{proof}
List all elements of $\Mlin$ as $x_0,x_1,...$. We construct $h$ using the back-and-forth method and find a sequence $(a_n)_{n \in \mathbb{Z}}$ such that $gh^{-1}gha_i=a_{i+1}>a_i$ for all $i \in \mathbb{Z}$ and for any $x\in \Mlin$, there exist $m,n \in \mathbb{Z}$ such that $a_m<x<a_n$, i.e. the $+$-orbital containing $a_0$ is unbounded below and above. Hence, $gh^{-1}gh$ has a single $+$-orbital.

\begin{figure}[h]
\centering
\begin{tikzpicture}
  \draw (0,0) node (1a){}
++(0.5,0) node[draw,circle,fill=black,minimum size=4pt,inner sep=0pt] (1) {}
++(0,0) node[above] (1n)  {$a_{-n-1}$}
     ++(1.2,0) node[draw,circle,fill=black,minimum size=4pt,inner sep=0pt] (2){} 
++(0,0) node[above] (2n)  {$g^{-1}a_{-n}$}
   ++(0.8,0) node[draw,circle,fill=black,minimum size=4pt,inner sep=0pt] (3)  {}
++(0.2,0) node[above] (3n)  {$a_{-n}$}
  ++(0.5,0) node[draw,circle,fill=black,minimum size=2pt,inner sep=0pt] (7) {}
     ++(0,0) node[below] (3x)  {$x_n$}
   ++(0.5,0) node[draw,circle,fill=black,minimum size=4pt,inner sep=0pt] (4) {}
   ++(0,0) node[above] (4n)  {$a_{-1}$}
     ++(1.1,0) node[draw,circle,fill=black,minimum size=4pt,inner sep=0pt] (5) {}
     ++(0,0) node[above] (5n)  {$g^{-1}a_0$}
  ++(0.8,0) node[draw,circle,fill=black,minimum size=4pt,inner sep=0pt] (6) {}
     ++(0,0) node[above] (6n)  {$a_{0}$}
  ++(0.8,0) node[draw,circle,fill=black,minimum size=2pt,inner sep=0pt] (7) {}
     ++(0,0) node[below] (7n)  {$x_0$}
 ++(0.8,0) node[draw,circle,fill=black,minimum size=4pt,inner sep=0pt] (8)  {}
    ++(0,0) node[above] (8n)  {$c_{0}$}
 ++(1.2,0) node[draw,circle,fill=black,minimum size=4pt,inner sep=0pt] (9)  {}
    ++(0,0) node[above] (9n)  {$a_{1}$}
   ++(0,0) node[above] (9n)  {$=gc_0$}
 ++(1.8,0) node[draw,circle,fill=black,minimum size=4pt,inner sep=0pt] (10)  {}
    ++(0,0) node[above] (10n)  {$a_{n}$}
++(1.2,0) node[draw,circle,fill=black,minimum size=4pt,inner sep=0pt] (11)  {}
    ++(0,0) node[above] (11n)  {$c_{n}$}
 ++(0.8,0) node[draw,circle,fill=black,minimum size=4pt,inner sep=0pt] (11x)  {}
    ++(0,0) node[above] (11xn)  {$a_{n+1}$}
        ++(0.5,0) node (11a)  {}
++(0,-2) node (12a)  {}
++(-1,0) node[draw,circle,fill=black,minimum size=4pt,inner sep=0pt] (12){} 
++(0,0) node[below] (12n)  {$gb_n$}
++(-1,0) node[draw,circle,fill=black,minimum size=4pt,inner sep=0pt] (13){} 
++(0,0) node[below] (13n)  {$b_n$}
++(-2,0) node[draw,circle,fill=black,minimum size=4pt,inner sep=0pt] (14){} 
++(0,0) node[below] (14n)  {$gb_{0}$}
++(-1,0) node[draw,circle,fill=black,minimum size=4pt,inner sep=0pt] (15){} 
++(0,0) node[below] (15n)  {$b_0$}
++(-1.3,0) node[draw,circle,fill=black,minimum size=2pt,inner sep=0pt] (16){} 
++(0,0) node[below] (16n)  {$x_0$}
++(-1,0) node[draw,circle,fill=black,minimum size=4pt,inner sep=0pt] (17){} 
++(0,0) node[below] (17n)  {$b_{-1}$}
++(-1,0) node[draw,circle,fill=black,minimum size=4pt,inner sep=0pt] (18){} 
++(0,0) node[below] (18n)  {$g^{-1}b_{-1}$}
  ++(-1.2,0) node[draw,circle,fill=black,minimum size=2pt,inner sep=0pt] (7) {}
     ++(0,0) node[below] (18x)  {$x_n$}
++(-0.7,0) node[draw,circle,fill=black,minimum size=4pt,inner sep=0pt] (19){} 
++(0,0) node[below] (19n)  {$b_{-n-1}$}
++(-1.6,0) node[draw,circle,fill=black,minimum size=4pt,inner sep=0pt] (20){} 
++(0,0) node[below] (20n)  {$g^{-1}b_{-n-1}$}
++(-0.9,0) node (19a){};

\draw[->] (1) -- (20) node[draw=none,fill=none,font=\scriptsize,midway,right]{};
\draw[->] (2) -- (19) node[draw=none,fill=none,font=\scriptsize,midway,right]{};
\draw[->] (5) -- (17) node[draw=none,fill=none,font=\scriptsize,midway,right]{};
\draw[->] (4) -- (18) node[draw=none,fill=none,font=\scriptsize,midway,right]{};
\draw[->] (8) -- (14) node[draw=none,fill=none,font=\scriptsize,midway,right]{};
\draw[->] (6) -- (15) node[draw=none,fill=none,font=\scriptsize,midway,right]{};
\draw[->] (10) -- (13) node[draw=none,fill=none,font=\scriptsize,midway,right]{};
\draw[->] (11) -- (12) node[draw=none,fill=none,font=\scriptsize,midway,right]{};
\draw[-] (1a) -- (11a) node[draw=none,fill=none,font=\scriptsize,midway,right]{};
\draw[-] (19a) -- (12a) node[draw=none,fill=none,font=\scriptsize,midway,right]{};
\end{tikzpicture}
\end{figure}

We start with $\tilde{h}$ as the empty map. Choose $a_0<x_0<b_0$ such that $ga_0>a_0, gb_0>b_0$. Extend $\tilde{h}$ by sending $a_0$ to $b_0$. By the hypothesis and Corollary \ref{exofreal}, we can find $c_0 >x_0,a_0$ such that $c_0$ realises $\tilde{h}^{-1} \cdot tp(gb_0/b_0)$ and $gc_0 >c_0$ since $\tilde{h}^{-1}\cdot tp(gb_0/b_0) \vdash \{x>a_0 \}$. Extend $\tilde{h}$ by sending $c_0$ to $gb_0$. Let $a_1:=gc_0$. Then $g\tilde{h}^{-1}g\tilde{h} a_0=a_1>a_0$ since $gc_0>c_0>a_0$. 

Now since $\tilde{h}\cdot tp(g^{-1}a_0/a_0c_0) \vdash \{ x<b_0,gb_0\}$, by Corollary \ref{exofreal}, we can find $b_{-1}<x_0$ such that $b_{-1}$ realises $\tilde{h}\cdot tp(g^{-1}a_0/a_0c_0)$ and $gb_{-1}>b_{-1}$. Extend $\tilde{h}$ by sending $g^{-1}a_0$ to $b_{-1}$. Similarly, we can choose $a_{-1}$ realising $\tilde{h}^{-1} \cdot tp(g^{-1}b_{-1}/b_0b_{-1}gb_0)$ such that $ga_{-1}>a_{-1}$ and extend $\tilde{h}$ by sending $a_{-1}$ to $g^{-1}b_{-1}$. Then $g\tilde{h}^{-1}g\tilde{h}a_{-1}=a_0>g^{-1}a_0>a_{-1}$. 

Let $A_0=\{a_{-1},g^{-1}a_0,a_0,c_0\}$ and $B_0=\{g^{-1}b_{-1},b_{-1},b_0,gb_0\}$ be the domain and image of $\tilde{h}$. Then, $x_0$ is in both $(\min A_0, \max A_0)$ and $(\min B_0,\max B_0)$. Choose $y_0$ realising $\tilde{h}\cdot tp(x_0/A_0)$ and extend $\tilde{h}$ by sending $x_0$ to $y_0$. Choose $z_0$ realising $\tilde{h}^{-1}\cdot tp(x_0/y_0B_0)$ and extend $\tilde{h}$ by sending $z_0$ to $x_0$. Extend $A_0$,$B_0$ to include $x_0,z_0$ and $y_0,x_0$ respectively.

Suppose at the $n$-th step, we have a partial isomorphism, $\tilde{h}: A_{n-1} \rightarrow B_{n-1}$ satisfying 
\begin{enumerate}[(i)]
\item $A_{n-1} =\{a_{-n},g^{-1}a_{-n+1},...,a_{0},c_0,...,a_{n-1},c_{n-1},x_0,...,x_{n-1},z_0,...,z_{n-1}\}$, $B_{n-1}=\{g^{-1}b_{-n},b_{-n},....,b_{-1},b_0,...,b_{n-1},gb_{n-1},x_0,...,x_{n-1},y_0,...,y_{n-1}\} $,
\item $\tilde{h}$ maps $a_i$ to $b_i$, $c_i$ to $gb_i$, $x_i$ to $y_i$ and $z_i$ to $x_i$ for all $i=0,...,n-1$ and $\tilde{h}$ maps $a_j$ to $g^{-1}b_j$ and $g^{-1}a_{j+1}$ to $b_j$ for all $j=-1,...,-n$,
\item $g\tilde{h}^{-1}g$ $\tilde{h} a_{i}=a_{i+1}>a_{i}$ for all $i=-n,...,n-1$, and
\item  $\min A_{n-1}=a_{-n}$, $\max A_{n-1}=c_{n-1}$, $\min B_{n-1}=g^{-1}b_{-n}$, $\max B_{n-1}=gb_{n-1}$.
\end{enumerate}

Since $\tilde{h}\cdot tp(a_n/A_{n-1}) \vdash \{ x>b |b \in B_{n-1}\}$, by Corollary \ref{exofreal}, we can choose $b_n >x_n$ realising $\tilde{h} \cdot tp(a_n/A_{n-1})$ such that $gb_n>b_n$ and extend $\tilde{h}$ by sending $a_n$ to $b_n$. We can find $c_{n} >x_n$ realising $\tilde{h}^{-1}\cdot tp(gb_n/b_nB_{n-1})$ such that $gc_{n}>c_{n}$ and extend $\tilde{h}$ by sending $c_n$ to $gb_n$. Let $a_{n+1}:=gc_{n}$. Then $g\tilde{h}^{-1}g\tilde{h} a_n=a_{n+1}>a_n$. Similarly, since $\tilde{h}\cdot tp(g^{-1}a_{-n}/a_nc_nA_{n-1}) \vdash \{ x<b |b\in b_ngb_nB_{n-1}\}$, we can choose $b_{-n-1} <x_n$ realising $\tilde{h}\cdot tp(g^{-1}a_{-n}/a_n c_{n}A_{n-1})$ such that $gb_{-n-1}>b_{-n-1}$. Extend $\tilde{h}$ by sending $g^{-1}a_{-n}$ to $b_{-n-1}$. Choose $a_{-n-1} <x_n$ such that $a_{-n-1}$ realises $\tilde{h}^{-1}\cdot tp(g^{-1}b_{-n-1}/b_nb_{-n-1}gb_nB_{n-1})$ and $ga_{-n-1}>a_{-n-1}$. Extend $\tilde{h}$ by sending $a_{-n-1}$ to $g^{-1}b_{-n-1}$. Then $g\tilde{h}^{-1}g\tilde{h}a_{-n-1}=a_{-n}>a_{-n-1}$.

Let $A_n=A_{n-1}\cup \{a_{-n-1},g^{-1}a_{-n},a_n,c_n\}$ and $B_n=B_{n-1}\cup\{g^{-1}b_{-n-1},\\ b_{-n-1},b_n,gb_n\}$. Then $x_n$ is in both $( \min A_n, \max A_n)$ and $(\min B_n, \max B_n)$. Find $y_n$ realising $\tilde{h} \cdot tp(x_n/A_n)$ and extend $\tilde{h}$ by sending $x_n$ to $y_n$. Find $z_n$ realising $\tilde{h}^{-1}\cdot tp(x_n/y_nB_n)$. Extend $A_n$,$B_n$ to include $x_n,z_n$ and $y_n,x_n$ respectively. Then, $\min A_n=a_{-n-1}$, $\max A_n=c_n$, $\min B_n=g^{-1}b_{-n_1}$, $\max B_n=gb_{n}$. Hence $\tilde{h}: A_n\rightarrow B_n$ satisfies the hypothesis (i)-(iv).

Let $h$ be the union of $\tilde{h}$ over all steps. Then $gh^{-1}gh$ maps $a_i$ to $a_{i+1}>a_i$ for all $i \in \mathbb{Z}$ and for any $x\in \Mlin$, there exist $n,m  \in \mathbb{Z}$ such that $x<x_n<c_n<a_{n+1}$ and $x>x_m>a_{-m}$. Therefore, $gh^{-1}gh$ has a single $+$-orbital.
\end{proof}

We prove the following two propositions using a similar approach as the previous one.

\begin{prop}\label{2orbitals}
Let $g \in Aut(\Mlin)$ be such that for all $x \in \Mlin$, there exists $y>x$ and $z<x$ such that $gy>y$ and $gz<z$. Then there exists $h \in Aut(\Mlin)$ such that $gh^{-1}gh$ has a $+$-orbital unbounded above and a $-$-orbital unbounded below.
\end{prop}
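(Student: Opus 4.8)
The plan is to run the same back-and-forth construction as in the proof of Proposition~\ref{oneorbitalcase1}, but instead of pushing a single orbital out to both $+\infty$ and $-\infty$ we push one orbital up to $+\infty$ and a second, independent orbital down to $-\infty$. Enumerate $\Mlin$ as $x_0,x_1,\dots$ and build $h$ as the union of an increasing chain of finite partial isomorphisms $\tilde h$, producing along the way a strictly increasing sequence $a_0<a_1<\cdots$ unbounded above and a strictly decreasing sequence $d_0>d_1>\cdots$ unbounded below, arranged so that at every stage $g\tilde h^{-1}g\tilde h(a_i)=a_{i+1}$ and $g\tilde h^{-1}g\tilde h(d_i)=d_{i+1}$. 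Once the construction is complete, $h$ is an automorphism of $\Mlin$; the orbital of $gh^{-1}gh$ through $a_0$ contains all the $a_i$, hence is unbounded above, and since $gh^{-1}gh(a_0)=a_1>a_0$ it is a $+$-orbital; symmetrically the orbital through $d_0$ is a $-$-orbital unbounded below. Being respectively a $+$- and a $-$-orbital these two orbitals are automatically distinct, which is everything the statement asks for.

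For the positive chain I maintain the invariant that, at the start of step $n$, $a_n$ lies strictly above the current domain of $\tilde h$. I then apply Corollary~\ref{exofreal}(i) --- its hypothesis ``for all $y$ there is $z>y$ with $gz>z$'' is part of our assumption, and its side condition $p^{<}\vdash\{x>x'\}$ holds because $a_n$ is above $\mathrm{dom}\,\tilde h$ --- to choose $b_n>x_n$ realising $\tilde h\cdot tp(a_n/\mathrm{dom}\,\tilde h)$ with $gb_n>b_n$, and extend $\tilde h$ by $a_n\mapsto b_n$. Now $b_n$ is the maximum of the range, so Corollary~\ref{exofreal}(i) again gives $c_n>x_n$ realising $\tilde h^{-1}\cdot tp(gb_n/\mathrm{im}\,\tilde h)$ with $gc_n>c_n$; I extend $\tilde h$ by $c_n\mapsto gb_n$ and set $a_{n+1}:=gc_n$, so that $g\tilde h^{-1}g\tilde h(a_n)=a_{n+1}$ and $a_{n+1}=gc_n>c_n>a_n$. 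The negative chain is treated by the mirror argument with Corollary~\ref{exofreal}(iv): with $d_n$ below $\mathrm{dom}\,\tilde h$, choose $e_n<x_n$ realising $\tilde h\cdot tp(d_n/\mathrm{dom}\,\tilde h)$ with $ge_n<e_n$, extend by $d_n\mapsto e_n$; then choose $f_n<x_n$ realising $\tilde h^{-1}\cdot tp(ge_n/\mathrm{im}\,\tilde h)$ with $gf_n<f_n$, extend by $f_n\mapsto ge_n$, and set $d_{n+1}:=gf_n<f_n<d_n$. Finally, exactly as in Proposition~\ref{oneorbitalcase1}, absorb $x_n$ into the domain and range of $\tilde h$ by realising $\tilde h\cdot tp(x_n/\mathrm{dom}\,\tilde h)$ and $\tilde h^{-1}\cdot tp(x_n/\mathrm{im}\,\tilde h)$ in the appropriate open intervals (the choices $e_n,f_n<x_n<b_n,c_n$ put $x_n$ strictly between the chain elements just produced, so Lemma~\ref{epininterval} applies); this keeps $\tilde h$ a partial isomorphism, makes the union total, and carries the invariants ($a_{n+1}$ above the domain, $d_{n+1}$ below it) into step $n+1$. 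The construction is started from $\tilde h=\emptyset$ with any $d_0<a_0$ for which $ga_0>a_0$ and $gd_0<d_0$, which exist by hypothesis.

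The only delicate point is to order the moves within each step so that $a_n$ (resp.\ $d_n$) is still above (resp.\ below) $\mathrm{dom}\,\tilde h$ at the moment Corollary~\ref{exofreal} is invoked and so that the enumeration elements $x_n$ never become extremal in $\mathrm{dom}\,\tilde h$ or $\mathrm{im}\,\tilde h$; but this is handled precisely as in the proof of Proposition~\ref{oneorbitalcase1}, and if anything the present construction is easier, since each of the two orbitals need only be driven to infinity in a single direction rather than in both. I therefore do not expect any genuine obstacle beyond transcribing that bookkeeping.
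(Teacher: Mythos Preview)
Your proposal is correct and follows essentially the same approach as the paper's own proof: a back-and-forth building two independent chains, one driven to $+\infty$ via Corollary~\ref{exofreal}(i) and one to $-\infty$ via Corollary~\ref{exofreal}(iv), with the enumeration $x_n$ absorbed at each stage exactly as in Proposition~\ref{oneorbitalcase1}. The paper uses the names $a'_n,b'_n,c'_n$ for your $d_n,e_n,f_n$ and does not impose $ga_0>a_0$ or $gd_0<d_0$ at the start (only the images $b_0,b'_0$ need to sit in the right kind of orbital), but this extra condition is harmless and the bookkeeping you defer to Proposition~\ref{oneorbitalcase1} is indeed identical.
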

\begin{proof}
List all elements of $\Mlin$ as $x_0,x_1,...$. We construct $h$ using the back-and-forth method and find sequences $(a_n)_{n \in \mathbb{Z}}, (a'_n)_{n \in \mathbb{Z}}$ such that $gh^{-1}gha_i=a_{i+1}>a_i$, $gh^{-1}gha'_i=a'_{i+1}<a'_i$ for all $i \in \mathbb{Z}$ and for any $x\in \Mlin$, there exists $m,n \in \mathbb{Z}$ such that $a'_m<x<a_n$. Then the $+$-orbital containing $a_0$ is unbounded above and the $-$-orbital containing $a'_0$ is unbounded below

\begin{figure}[h]
\centering
\begin{tikzpicture}
  \draw (0,0) node (1a){}
++(0.5,0) node[draw,circle,fill=black,minimum size=4pt,inner sep=0pt] (1) {}
++(0,0) node[above] (1n)  {$c'_{n}$}
     ++(1.2,0) node[draw,circle,fill=black,minimum size=4pt,inner sep=0pt] (2){} 
++(0,0) node[above] (2n)  {$a'_{n}$}
  ++(1.0,0) node[draw,circle,fill=black,minimum size=2pt,inner sep=0pt] (7) {}
     ++(0,0) node[below] (3x)  {$x_n$}
   ++(0.5,0) node[draw,circle,fill=black,minimum size=4pt,inner sep=0pt] (3)  {}
++(0,0) node[above] (3n)  {$a'_{1}$}
   ++(0.7,0) node[draw,circle,fill=black,minimum size=4pt,inner sep=0pt] (4) {}
   ++(0,0) node[above] (4n)  {$c'_{0}$}
     ++(0.9,0) node[draw,circle,fill=black,minimum size=4pt,inner sep=0pt] (5) {}
     ++(0,0) node[above] (5n)  {$a'_0$}
  ++(1.1,0) node[draw,circle,fill=black,minimum size=2pt,inner sep=0pt] (7) {}
     ++(0,0) node[below] (7n)  {$x_0$}
  ++(0.5,0) node[draw,circle,fill=black,minimum size=4pt,inner sep=0pt] (6) {}
     ++(0,0) node[above] (6n)  {$a_{0}$}
 ++(0.8,0) node[draw,circle,fill=black,minimum size=4pt,inner sep=0pt] (8)  {}
    ++(0,0) node[above] (8n)  {$c_{0}$}
 ++(1.2,0) node[draw,circle,fill=black,minimum size=4pt,inner sep=0pt] (9)  {}
    ++(0,0) node[above] (9n)  {$a_{1}$}
    ++(0,0) node[below] {$=gc_0$}
 ++(1.4,0) node[draw,circle,fill=black,minimum size=4pt,inner sep=0pt] (10)  {}
    ++(0,0) node[above] (10n)  {$a_{n}$}
++(1.2,0) node[draw,circle,fill=black,minimum size=4pt,inner sep=0pt] (11)  {}
    ++(0,0) node[above] (11n)  {$c_{n}$}
        ++(0.8,0) node (11a)  {}
++(0,-2) node (12a)  {}
++(-0.5,0) node[draw,circle,fill=black,minimum size=4pt,inner sep=0pt] (12){} 
++(0,0) node[below] (12n)  {$gb_n$}
++(-1,0) node[draw,circle,fill=black,minimum size=4pt,inner sep=0pt] (13){} 
++(0,0) node[below] (13n)  {$b_n$}
++(-1.6,0) node[draw,circle,fill=black,minimum size=4pt,inner sep=0pt] (14){} 
++(0,0) node[below] (14n)  {$gb_{0}$}
++(-1,0) node[draw,circle,fill=black,minimum size=4pt,inner sep=0pt] (15){} 
++(0,0) node[below] (15n)  {$b_0$}
++(-1.8,0) node[draw,circle,fill=black,minimum size=2pt,inner sep=0pt] (16){} 
++(0,0) node[below] (16n)  {$x_0$}
++(-0.5,0) node[draw,circle,fill=black,minimum size=4pt,inner sep=0pt] (17){} 
++(0,0) node[below] (17n)  {$b'_{0}$}
++(-1,0) node[draw,circle,fill=black,minimum size=4pt,inner sep=0pt] (18){} 
++(0,0) node[below] (18n)  {$gb'_{0}$}
  ++(-1.7,0) node[draw,circle,fill=black,minimum size=2pt,inner sep=0pt] (7) {}
     ++(0,0) node[below] (18x)  {$x_n$}
++(-1.2,0) node[draw,circle,fill=black,minimum size=4pt,inner sep=0pt] (19){} 
++(0,0) node[below] (19n)  {$b'_{n}$}
++(-1,0) node[draw,circle,fill=black,minimum size=4pt,inner sep=0pt] (20){} 
++(0,0) node[below] (20n)  {$gb'_{n}$}
++(-0.5,0) node (19a){};

\draw[->] (1) -- (20) node[draw=none,fill=none,font=\scriptsize,midway,right]{};
\draw[->] (2) -- (19) node[draw=none,fill=none,font=\scriptsize,midway,right]{};
\draw[->] (5) -- (17) node[draw=none,fill=none,font=\scriptsize,midway,right]{};
\draw[->] (4) -- (18) node[draw=none,fill=none,font=\scriptsize,midway,right]{};
\draw[->] (8) -- (14) node[draw=none,fill=none,font=\scriptsize,midway,right]{};
\draw[->] (6) -- (15) node[draw=none,fill=none,font=\scriptsize,midway,right]{};
\draw[->] (10) -- (13) node[draw=none,fill=none,font=\scriptsize,midway,right]{};
\draw[->] (11) -- (12) node[draw=none,fill=none,font=\scriptsize,midway,right]{};
\draw[-] (1a) -- (11a) node[draw=none,fill=none,font=\scriptsize,midway,right]{};
\draw[-] (19a) -- (12a) node[draw=none,fill=none,font=\scriptsize,midway,right]{};
\end{tikzpicture}
\end{figure}

We start with $\tilde{h}$ as the empty map. Choose $a_0,b_0>x_0$ such that $gb_0>b_0$. Extend $\tilde{h}$ by sending $a_0$ to $b_0$. By the hypothesis and Corollary \ref{exofreal}, we can find $c_0 >x_0$ such that $c_0$ realises $\tilde{h}  \cdot tp(gb_0/b_0)$ and $gc_0 >c_0$ since $\tilde{h}^{-1}\cdot tp(gb_0/b_0) \vdash \{x>a_0 \}$. Extend $\tilde{h}$ by sending $c_0$ to $gb_0$. Let $a_1:=gc_0$. Then $g\tilde{h}^{-1}g\tilde{h} a_0=a_1>a_0$ since $gc_0>c_0>a_0$. 

Similarly, by Corollary \ref{exofreal}, we can find $a'_0,b'_0<x_0$ such that $b'_{0}$ realises $\tilde{h}\cdot tp(a'_0/a_0c_0)$ and $gb'_{0}<b'_{0}$. Extend $\tilde{h}$ by sending $a'_0$ to $b'_{0}$. We can choose $c'_{0}$ realising $\tilde{h}^{-1}\cdot tp(gb'_{0}/b_0b'_{0}gb_0)$ such that $gc'_{0}<c'_{0}$ since $\tilde{h}^{-1}\cdot tp(gb'_{0}/b_0b'_{0}gb_0) \vdash \{ x<a'_0,a_0,c_0 \}$. Extend $\tilde{h}$ by sending $c'_{0}$ to $gb'_{0}$. Let $a'_1:=gc'_0$. Then $g\tilde{h}^{-1}g\tilde{h}a'_{0}=a'_1<a'_0$. 

Let $A_0=\{c'_0,a'_0,a_0,a_0,c_0\}$ and $B_0=\{gb'_{0},b'_{0},b_0,gb_0\}$ be the domain and image of $\tilde{h}$. Then, $x_0$ is in both $(\min A_0, \max A_0)$ and $(\min B_0,\max B_0)$. Choose $y_0$ realising $\tilde{h}\cdot tp(x_0/A_0)$ and extend $\tilde{h}$ by sending $x_0$ to $y_0$. Choose $z_0$ realising $\tilde{h}^{-1}\cdot tp(x_0/y_0B_0)$ and extend $\tilde{h}$ by sending $z_0$ to $x_0$. Extend $A_0$,$B_0$ to include $x_0,z_0$ and $y_0,x_0$ respectively.

Now suppose at the $n$-th step, we have a partial isomorphism of $\M$, $\tilde{h}: A_{n-1} \rightarrow B_{n-1}$ satisfying
\begin{enumerate}[(i)]
\item $A_{n-1} =\{c'_{n-1},a'_{n-1},...,a'_0,a_{0},...,a_{n-1},c_{n-1},x_0,...,x_{n-1},$ $z_0,...,z_{n-1}\}$, \\ $B_{n-1}=\{gb'_{n-1},b'_{n-1},....,b'_0,b_0,...,b_{n-1},gb_{n-1},x_0,...,x_{n-1},$ $y_0,...,y_{n-1}\} $,
\item $\tilde{h}$ maps $a_i$ to $b_i$, $c_i$ to $gb_i$, $a'_i$ to $b'_i$, $c'_i$ to $gb'_i$, $x_i$ to $y_i$ and $z_i$ to $x_i$ for all $i=0,...,n-1$,
\item $g\tilde{h}^{-1}g\tilde{h} a_{i}=a_{i+1}>a_{i}$ and $g\tilde{h}^{-1}g\tilde{h} a'_{i}=a'_{i+1}<a'_{i}$ for all $i=0,...,n-1$, and
\item  $\min A_{n-1}=c'_{n-1}$, $\max A_{n-1}=c_{n-1}$, $\min B_{n-1}=gb'_{n-1}$, $\max B_{n-1}=gb_{n-1}$.
\end{enumerate}

Since $\tilde{h}\cdot tp(a_n/A_{n-1}) \vdash \{ x>b | b \in B_{n-1}\}$, we can choose $b_n >x_n$ realising $\tilde{h}\cdot tp(a_n/A_{n-1})$ such that $gb_n>b_n$ and extend $\tilde{h}$ by sending $a_n$ to $b_n$. We can find $c_{n} >x_n$ realising $\tilde{h}^{-1}\cdot tp(gb_n/b_nB_{n-1})$ such that $gc_{n}>c_{n}$ and extend $\tilde{h}$ by sending $c_n$ to $gb_n$. Let $a_{n+1}:=gc_{n}$. Then $g\tilde{h}^{-1}g\tilde{h} a_n=a_{n+1}>a_n$. Similarly, since $\tilde{h}\cdot tp(a'_{n}/a_nc_nA_{n-1}) \vdash \{ x<b |b\in b_ngb_nB_{n-1}\}$, we can choose $b'_{n} <x_n$ realising $\tilde{h}\cdot tp(a'_n/a_n c_{n}A_{n-1})$ such that $gb'_{n}<b'_{n}$. Extend $\tilde{h}$ by sending $a'_{n}$ to $b'_{n}$. Choose $c'_{n} <x_n$ such that $a'_{n}$ realises $\tilde{h}^{-1}\cdot tp(gb'_{n}/b_nb'_{n}gb_nB_{n-1})$ and $gc'_{n}<c'_{n}$. Extend $\tilde{h}$ by sending $c'_n$ to $gb'_{n}$. Let $a'_{n+1} =gc'_n$. Then $g\tilde{h}^{-1}g\tilde{h}a'_{n-1}=a'_{-n}<a'_{n-1}$.

Let $A_n=A_{n-1} \cup\{a'_{n},c'_n,a_n,c_n\}$ and $B_n=B_{n-1} \cup \{b_{-n-1},g^{-1}b_{-n-1},b_n,\\ gb_n\}$. Then $x_n$ is in both $( \min A_n, \max A_n)$ and $(\min B_n,\max B_n)$. Find $y_n$ realising $\tilde{h}\cdot tp(x_n/A_n)$ and extend $\tilde{h}$ by sending $x_n$ to $y_n$. Find $z_n$ realising $\tilde{h}^{-1}\cdot tp(x_n/y_nB_n)$. Extend $A_n$,$B_n$ to include $x_n,z_n$ and $y_n,x_n$ respectively. Then, $\min A_n=c'_n$, $\max A_n=c_n$, $\min B_n=gb'_{n}$, $\max A_n=gb_{n}$. Hence $\tilde{h}: A_n\rightarrow B_n$ satisfies the hypothesis (i)-(iv).

Let $h$ be the union of $\tilde{h}$ over all steps. Then $gh^{-1}gha_i=a_{i+1}>a_i$, $gh^{-1}gha'_i=a'_{i+1}<a'_i$ for all $i \in \mathbb{Z}$ and for any $x\in \Mlin$, there exists $m,n \in \mathbb{Z}$ such that $a'_{m+1}<c'_m<x_m<x<x_n<c_n<a_{n+1}$.
\end{proof}

\begin{prop}\label{oneorbitalcase2}
Let $g \in Aut(\Mlin)$ be such that $g$ has a $+$-orbital unbounded above and a $-$-orbital unbounded below. Then there exists $h \in Aut(\Mlin)$ such that $[g,h]$ has a single $+$-orbital.
\end{prop}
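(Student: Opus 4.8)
The plan is to construct $h$ by a back-and-forth, exactly in the spirit of the proofs of Propositions~\ref{oneorbitalcase1} and \ref{2orbitals}. Fix an enumeration $\Mlin=\{x_0,x_1,\dots\}$. I will build $h=\bigcup_n\tilde h$ together with a bi-infinite sequence $(a_i)_{i\in\mathbb Z}$ so that $[g,h]\,a_i=a_{i+1}>a_i$ for every $i$ and so that for each $x\in\Mlin$ there are $m,n$ with $a_{-m}<x<a_n$. Granting this, the orbital of $[g,h]$ through $a_0$ is the convexification of $\{a_i\}$, hence all of $\Mlin$, and since $[g,h]\,a_0>a_0$ the remark noted above gives $[g,h]\,z>z$ for every $z$ in that orbital; thus $[g,h]$ has a single $+$-orbital. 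Write $g^h:=h^{-1}gh$, so $[g,h]=g^{-1}g^h=g^{-1}h^{-1}gh$. Let $I_+$ be the $+$-orbital of $g$ that is unbounded above and $I_-$ the $-$-orbital unbounded below. Since distinct orbitals are disjoint convex sets, at most one is unbounded above and at most one is unbounded below, so $I_\pm$ are uniquely determined, and being convex and cofinal (resp.\ coinitial) they are a final (resp.\ initial) segment: $I_+=(\inf I_+,\infty)$ and $I_-=(-\infty,\sup I_-)$. Hence every sufficiently large point lies in $I_+$ (where $g$ moves points up) and every sufficiently small point lies in $I_-$ (where $g$ moves points down); this is the only place the orbital hypothesis on $g$ enters.

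Unwinding the commutator shows the shape of the recursion. Following the images $a_i$, $b_i:=h(a_i)$, $g b_i$, $h^{-1}(g b_i)$, $g^{-1}(h^{-1}(g b_i))=a_{i+1}$, the equation $[g,h]\,a_i=a_{i+1}$ is equivalent to the pair of instructions
\[
h(a_i)=b_i,\qquad h\big(g a_{i+1}\big)=g b_i .
\]
At a forward stage I choose $b_i=h(a_i)$ in $I_+$ above all image points so far; then $g b_i>b_i$ is the largest image point, I pick $c_i:=g a_{i+1}$ above all domain points so far and above $g(x_i)$ realising over the current domain the type that $g b_i$ realises over the current image (possible by Lemma~\ref{epininterval} and Remark~\ref{septypermk}, as $g b_i$ dominates the current image), and set $a_{i+1}:=g^{-1}c_i$, which is $>x_i$ and $>a_i$; this makes $(a_i)_{i\ge0}$ cofinal. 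At a backward stage I push the sequence down through $I_-$: reading $[g,h]^{-1}=h^{-1}g^{-1}hg$, producing $a_{-n-1}$ from $a_{-n}$ amounts to choosing the image $\beta_n:=h(g a_{-n})$ of the determined point $g a_{-n}$ to lie in $I_-$ far below all image points so far, and then realising the type of $g^{-1}\beta_n\ (\in I_-)$ at a point $a_{-n-1}$ below all domain points so far and below $x_n$, so that $a_{-n-1}<x_n$ and $a_{-n-1}<a_{-n}$; this makes $(a_{-n})_{n\ge0}$ coinitial. At each stage I also put $x_n$ into the domain and the image of $\tilde h$ order-consistently, so $h$ is a total automorphism. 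Note $a_0$ may be taken arbitrary (say $a_0=x_0$): the forward orbit jumps straight up into $I_+$ and the backward orbit straight down into $I_-$, and in the middle region (between $\sup I_-$ and $\inf I_+$, where $g$ may be trivial) $h$ merely transports points, the commutator still moving them up because the $g^h$-factor carries them into $I_+$.

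The conclusion is then immediate from the stated properties of $(a_i)$. The hard part, just as in the proofs of Propositions~\ref{oneorbitalcase1} and \ref{2orbitals}, is the bookkeeping that keeps the back-and-forth honest: one must carry through the recursion an explicit list of invariants — the analogues of conditions (i)–(iv) there — recording which point maps to which, that the domain and image of $\tilde h$ are exactly the points introduced, and that their extreme elements are the expected $a_i$'s and their $g$-images; this is what guarantees that the relevant types imply ``$x$ lies above (resp.\ below) all current parameters'', so the required realisations with the prescribed order-behaviour exist. The second delicate point is that the forward part must be genuinely cofinal and the backward part genuinely coinitial, not merely monotone: for the forward part this is clear since $a_{i+1}=g^{-1}c_i$ with $c_i$ chosen past $g(x_i)$; for the backward part one uses the freedom in the choice of $\beta_n$ to make $a_{-n-1}$ lie arbitrarily close above $g a_{-n}$, so that the successive descents add up like the $g$-orbital steps inside $I_-$, which sum to infinity because $g^n a\to-\infty$ for $a\in I_-$.
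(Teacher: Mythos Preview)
Your plan is the same as the paper's, and the forward half of the construction is fine. Two points in the backward half need repair.

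First, you cannot take $a_0$ arbitrary. The backward recursion needs $ga_{-n}$ to be strictly below every domain point built so far, so that the image $\beta_n=h(ga_{-n})$ is free to be placed below all image points; this forces the $a_{-n}$ to live in $I_-$, and if $a_0\in I_+$ then already $ga_0>a_0$ and the first backward step fails. The paper simply starts with $a_0<z$, i.e.\ $a_0\in I_-$, and $b_0>y$, i.e.\ $b_0\in I_+$.

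Second, you cannot in general force $a_{-n-1}<x_n$. Once $\beta_n$ is chosen, the order-type of $g^{-1}\beta_n$ over the image pins $a_{-n-1}$ into the interval $(ga_{-n},\,\text{previous domain minimum})$, and $x_n$ may well lie below $ga_{-n}$. Your fallback in the last paragraph (``descents add up like $g$-orbital steps'') is the right idea, but the mechanism is not proximity of $a_{-n-1}$ to $ga_{-n}$; rather, by choosing $\beta_n$ below $g\cdot(\text{previous image minimum})$ one forces $g^{-1}\beta_n$ below the previous image minimum, hence $a_{-n-1}<ga_{-n+1}$. This two-step relation gives $a_{-j}<g^{\lfloor j/2\rfloor}a_0\to-\infty$, which is the coinitiality you want. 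The paper records exactly this bound as part of its invariant (v), together with the companion bounds $b_i>g^{\lfloor i/2\rfloor}b_0$ and $b_{-j}<g^{j-1}b_{-1}$ on the image side; you need these too, since the back-and-forth only yields a bijection once every $x_n$ eventually lies inside both $(\min A_n,\max A_n)$ and $(\min B_n,\max B_n)$, and that is what lets you insert $x_n$ without disturbing the extremes.
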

\begin{proof}
Let $y \in \Mlin$ be an element of the $+$-orbital unbounded above and let $z\in \Mlin$ be an element of the $-$-orbital unbounded below. Then, we have that $gx>x$ for all $x>y$ and $gx<x$ for all $x<z$. List all elements of $\M$ as $x_0,x_1,...$. We construct $h$ using the back-and-forth method and find a sequence $(a_n)_{n\in \mathbb{Z}}$ such that $[g,h]a_i=a_{i+1}>a_i$ for all $i \in \mathbb{Z}$ and for any $x\in \Mlin$, there exist $m,n \in \mathbb{Z}$ such that $a_m<x<a_n$. Then $[g,h]$ has a single $+$-orbital. We start with $\tilde{h}$ as the empty map. 

\begin{figure}[h]
\centering
\begin{tikzpicture}
  \draw (0,0) node (1a) {}
     ++(0.5,0) node[draw,circle,fill=black,minimum size=4pt,inner sep=0pt] (1){} 
   ++(0,0) node[above] (1n)  {$ga_{-n}$}
     ++(1.0,0) node[draw,circle,fill=black,minimum size=4pt,inner sep=0pt] (2){} 
   ++(0,0) node[above] (2n)  {$a_{-n-1}$}
   ++(0.8,0) node[draw,circle,fill=black,minimum size=4pt,inner sep=0pt] (3)  {}
++(0,0) node[above] (3n)  {$a_{-n}$}
   ++(1.5,0) node[draw,circle,fill=black,minimum size=4pt,inner sep=0pt] (4) {}
   ++(0,0) node[above] (4n)  {$ga_{0}$}
     ++(0.8,0) node[draw,circle,fill=black,minimum size=4pt,inner sep=0pt] (5) {}
     ++(0,0) node[above] (5n)  {$a_{-1}$}
     ++(0.8,0) node[draw,circle,fill=black,minimum size=4pt,inner sep=0pt] (20) {}
     ++(0,0) node[above] (20n)  {$a_{0}$}
  ++(0.8,0.5) node (6n)  {}
  ++(0,-3) node (6m) {}
  ++(1,0) node (7m) {}
 ++(0,3) node (7n)  {}
  ++(-0.2,-.5) node[above] (7a)  {$y$}
    ++(-1,0) node[above] (6a)  {$z$}
 ++(2.2,0) node[draw,circle,fill=black,minimum size=4pt,inner sep=0pt] (8)  {}
    ++(0,0) node[above] (8n)  {$a_{1}$}
 ++(0,0) node[below]  {$=g^{-1}c_0$}
 ++(1.2,0) node[draw,circle,fill=black,minimum size=4pt,inner sep=0pt] (9)  {}
    ++(0,0) node[above] (9n)  {$c_{0}$}
 ++(1.5,0) node[draw,circle,fill=black,minimum size=4pt,inner sep=0pt] (10)  {}
    ++(0,0) node[above] (10n)  {$a_{n}$}
 ++(0.8,0) node[draw,circle,fill=black,minimum size=4pt,inner sep=0pt] (11)  {}
    ++(0,0) node[above] (11n)  {$a_{n+1}$}
 ++(0.8,0) node[draw,circle,fill=black,minimum size=4pt,inner sep=0pt] (21)  {}
    ++(0,0) node[above] (21n)  {$c_n$}
        ++(0.5,0) node (11a)  {}
++(0,-2) node (12a)  {}
++(-0.8,0) node[draw,circle,fill=black,minimum size=4pt,inner sep=0pt] (12){} 
++(0,0) node[below] (12n)  {$gb_n$}
++(-1.2,0) node[draw,circle,fill=black,minimum size=4pt,inner sep=0pt] (13){} 
++(0,0) node[below] (13n)  {$b_n$}
++(-1.9,0) node[draw,circle,fill=black,minimum size=4pt,inner sep=0pt] (14){} 
++(0,0) node[below] (14n)  {$gb_{0}$}
++(-1.0,0) node[draw,circle,fill=black,minimum size=4pt,inner sep=0pt] (15){} 
++(0,0) node[below] (15n)  {$b_{0}$}
++(-2.5,0) node[draw,circle,fill=black,minimum size=4pt,inner sep=0pt] (18){} 
++(0,0) node[below] (18n)  {$g^{-1}b_{-1}$}
++(-1.5,0) node[draw,circle,fill=black,minimum size=4pt,inner sep=0pt] (19){} 
++(0,0) node[below] (19n)  {$b_{-1}$}
++(-1.8,0) node[draw,circle,fill=black,minimum size=4pt,inner sep=0pt] (22){} 
++(0,0) node[below] (22n)  {$g^{-1}b_{-n-1}$}
++(-1.4,0) node[draw,circle,fill=black,minimum size=4pt,inner sep=0pt] (23){} 
++(0,0) node[below] (22n)  {$b_{-n-1}$}
++(-0.9,0) node (19a){};

\draw[->] (1) -- (23) node[draw=none,fill=none,font=\scriptsize,midway,right]{};
\draw[->] (2) -- (22) node[draw=none,fill=none,font=\scriptsize,midway,right]{};
\draw[->] (20) -- (15) node[draw=none,fill=none,font=\scriptsize,midway,right]{};
\draw[->] (4) -- (19) node[draw=none,fill=none,font=\scriptsize,midway,right]{};
\draw[->] (5) -- (18) node[draw=none,fill=none,font=\scriptsize,midway,right]{};
\draw[->] (10) -- (13) node[draw=none,fill=none,font=\scriptsize,midway,right]{};
\draw[->] (9) -- (14) node[draw=none,fill=none,font=\scriptsize,midway,right]{};
\draw[->] (21) -- (12) node[draw=none,fill=none,font=\scriptsize,midway,right]{};
\draw[-] (1a) -- (11a) node[draw=none,fill=none,font=\scriptsize,midway,right]{};
\draw[-] (19a) -- (12a) node[draw=none,fill=none,font=\scriptsize,midway,right]{};
\draw[-] (6n) -- (6m) node[draw=none,fill=none]{};
\draw[-] (7n) -- (7m) node[draw=none,fill=none]{};
\end{tikzpicture}
\end{figure}

We can choose $a_0< z,x_0$ and $b_0>x_0,y$. Then $ga_0<a_0$ and $gb_0>b_0$. Extend $\tilde{h}$ by sending $a_0$ to $b_0$. By Corollary \ref{exofreal}, we can find $c_0 >x_0,gy$ realising $\tilde{h}^{-1}\cdot tp(gb_0/b_0)$ since $\tilde{h}^{-1}\cdot tp(gb_0/b_0) \vdash \{x>a_0 \}$. Extend $\tilde{h}$ by sending $c_0$ to $gb_0$. Let $a_1:=g^{-1}c_0$. Then $g^{-1}c_0=[g,\tilde{h}] a_0=a_1>y>z>a_0$.

Similarly, we can choose $b_{-1}<gz,x_0$ such that $b_{-1}$ realises $\tilde{h}\cdot tp(ga_0/c_0a_0)$. Then $g^{-1}b_{-1} < z$. Extend $\tilde{h}$ by sending $ga_0$ to $b_{-1}$. Choose $a_{-1}$ realising $\tilde{h}^{-1}\cdot  tp(g^{-1}b_{-1}/b_{-1}b_0gb_0)$ and extend $\tilde{h}$ by sending $a_{-1}$ to $g^{-1}b_{-1}$. Then $a_{-1}<a_0$ since $g^{-1}b_{-1}<z<y<b_0$. Then we have $[g,\tilde{h}] a_{-1}=a_0>a_{-1}$.

Let $A_0=\{ga_0,a_{-1},a_0,c_0\}$ and $B_0=\{b_{-1},g^{-1}b_{-1},b_0,gb_0\}$ be the domain and image of $\tilde{h}$. Then $x_0$ is in $(\min A_0, \max A_0)\cap(\min B_0,\max B_0)$. Choose $y_0$ realising $\tilde{h}\cdot tp(x_0/A_0)$ and extend $\tilde{h}$ by sending $x_0$ to $y_0$. Choose $z_0$ realising $\tilde{h}^{-1}\cdot tp(x_0/y_0B_0)$ and extend $\tilde{h}$ by sending $z_0$ to $x_0$. Extend $A_0$,$B_0$ to include $x_0,z_0$ and $y_0,x_0$ respectively.

Now suppose at the $n$-th step, we have a partial isomorphism of $\M$, $\tilde{h}: A_{n-1} \rightarrow B_{n-1}$ satisfying 
\begin{enumerate}[(i)]
\item $A_{n-1}= \{ga_{-n+1},a_{-n},...a_{-1},a_0,...,a_{n-1},c_{n-1},x_0,...,x_{n-1},z_0,...,z_{n-1}\}$ with $\min A_{n-1} =ga_{-n+1}<a_{-n}<\cdots<ga_0<a_{-1}<a_0<a_1<c_0<\cdots<c_{n-2}<a_{n}<c_{n-1}=\max A_{n-1}$,
\item $B_{n-1}=\{b_{-n},g^{-1}b_{-n},....,b_0,gb_0,...,b_{n-1},gb_{n-1},x_0,...,x_{n-1},y_0,...,y_{n-1}\}$ with $\min B_{n-1}=b_{-n}<g^{-1}b_{-n}<\cdots<b_{-1}<g^{-1}b_{-1}<b_0<gb_0<\cdots<b_{n-1}<gb_{n-1}=\max B_{n-1}$,
\item $\tilde{h}$ maps $a_i$ to $b_i$, $c_i$ to $gb_i$, $x_i$ to $y_i$ and $z_i$ to $x_i$ for all $i=0,...,n-1$ and $\tilde{h}$ maps $a_{-j}$ to $g^{-1}b_{-j}$ and $ga_{-j+1}$ to $b_{-j}$ for all $j=1,...,n$,
\item $[g,\tilde{h}] a_{i}=a_{i+1}>a_{i}$ for all $i=-n,...,n-1$, and
\item $c_i>g^ia_1$, $b_{i}>g^{\lfloor\frac{i}{2}\rfloor}b_0$  for all $i=1,...,n-1$ and $a_{-j}<g^{\lfloor\frac{j}{2}\rfloor} a_0$, $b_{-j}<g^{j-1}b_{-1}$ for all $j=2,...,n$.
\end{enumerate}

We can choose $b_n$ realising $\tilde{h}\cdot tp(a_n/A_{n-1})$ and extend $\tilde{h}$ by sending $a_n$ to $b_n$. Then $gb_n>b_n$. Since by the inductive hypothesis, $a_n>c_{n-2}$ and $\tilde{h}$ maps $c_{n-2}$ to $gb_{n-2}$, we have $b_n>gb_{n-2}$. By the inductive hypothesis (v), $b_{n-2}>g^{\lfloor\frac{n-2}{2}\rfloor}b_0$. Hence, we have $b_n>gb_{n-2}>g^{\lfloor\frac{n}{2}\rfloor}b_0$.

We can find $c_n>x_n, g^na_1, gc_{n-1}$ realising $\tilde{h}^{-1}\cdot tp(gb_n/b_nB_{n-1})$ and extend $\tilde{h}$ by sending $c_n$ to $gb_n$. Let $a_{n+1}:=g^{-1}c_n$. Then $a_{n+1}=[g,\tilde{h}] a_n>[g,\tilde{h}]a_{n-1}=a_n$ since $a_n >a_{n-1}$. We also have $a_{n+1}=g^{-1}c_n>c_{n-1}$ since $c_n>gc_{n-1}$.

Choose $b_{-n-1}<x_n, g^{n}b_{-1}, gb_{-n}$ realising $\tilde{h}\cdot tp(ga_{-n}/a_nc_nA_{n-1})$. Extend $\tilde{h}$ by sending $ga_{-n}$ to $b_{-n-1}$. Choose $a_{-n-1}$ realising $\tilde{h}^{-1}\cdot tp(g^{-1}b_{-n-1}/b_nb_{-n-1}$\\$gb_nB_{n-1})$ and extend $\tilde{h}$ by sending $a_{-n-1}$ to $g^{-1}b_{-n-1}$. Then $a_{-n-1}=[\tilde{h},g]a_{-n} <[\tilde{h},g]a_{-n+1} =a_{-n}$ since $a_{-n}<a_{-n+1}$. Since $b_{-n-1}< gb_{-n}$ and $\tilde{h}$ maps $ga_{-n+1}$ to $b_{-n}$ and $a_{-n-1}$ to $g^{-1}b_{-n-1}$, we have $a_{-n-1}<ga_{-n+1}$. By the inductive hypothesis (v), we have $ga_{-n+1}<g^{\lfloor\frac{n-1}{2}\rfloor} a_0$. Hence, $a_{-n-1}<ga_{-n+1}<g^{\lfloor\frac{n+1}{2}\rfloor} a_0$.

Now let $A_n=A_{n-1}\cup \{a_n,c_n,ga_{-n},a_{-n-1}\}$ and $B_n=B_{n-1}\cup \{b_{-n-1},$\\$g^{-1}b_{-n-1},b_n,gb_n \}$. By rearranging the list $x_n,x_{n+1},...$, we may assume $x_n$ is in $( \min A_n, \max A_n)\cap(\min B_n,\max B_n)$. Find $z_n$ realising $\tilde{h}^{-1}\cdot  tp(x_n/y_nB)$. Extend $A_n$,$B_n$ to include $x_n,z_n$ and $y_n,x_n$ respectively. Then, $\min B_n=b_{-n-1}$, $\max B_n=gb_{n}$, $\min A_n=ga_{-n}$, $\max A_n=c_{n}$. Hence, $\tilde{h}: A_n\rightarrow B_n$ satisfies the hypothesis (i)-(v).


Let $h$ be the union of $\tilde{h}$ over all steps. Since $g$ has a $+$-orbital unbounded above and a $-$-orbital unbounded below, by Remark \ref{orbitalrmk}, we know that $(g^ia_1)_{i\in \mathbb{Z}}, (g^ib_0)_{i\in \mathbb{Z}}$ are unbounded above and $(g^ia_0)_{i\in \mathbb{Z}}, (g^ib_{-1})_{i\in \mathbb{Z}}$ are unbounded below. Hence, for any $x\in \Mlin$, we can find $m \in \mathbb{Z}$ such that $c_m>g^ma_1>x$, $b_m>g^{\lfloor \frac{m}{2}\rfloor} b_0>x$, $a_{-m}<g^{\lfloor \frac{m}{2}\rfloor}a_0<x$ and $b_{-m}<g^{m-1}b_{-1}<x$. In other word, $\bigcup_{n \in \mathbb{N}}( \min A_n, \max A_n)\cap(\min B_n,\max B_n)=\Mlin$. Thus, $h$ is bijective and $h \in Aut(\Mlin)$.

Then we have that $[g,h]$ maps $a_i$ to $a_{i+1}>a_i$ for all $i \in \mathbb{Z}$ and for any $x\in \Mlin$, there exist $n,m  \in \mathbb{Z}$ such that $x<x_n<c_n<a_{n+1}$ and $x>x_m>a_{-m}$. Therefore, $[g,h]$ has a single $+$-orbital.
\end{proof}

\begin{thm}\label{oneorbitalgeneral}
Let $\M$ be a countable homogeneous $\lan$-structure and $\Mlin$ be its linear order expansion. Let $g$ be an unbounded automorphism of $Aut(\Mlin)$. Then there exists a product of conjugates of $g$ and $g^{-1}$ that has a single $+$-orbital. Similarly, there exists a product of conjugates of $g$ and $g^{-1}$ that has a single $-$-orbital.
\end{thm}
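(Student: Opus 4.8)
The plan is to reduce the general unbounded case to the two scenarios already handled in Propositions \ref{oneorbitalcase1}, \ref{2orbitals} and \ref{oneorbitalcase2}. Since $g$ is unbounded, it is neither left-bounded nor right-bounded; the first task is to analyze, for each ``end'' of $\Mlin$, whether arbitrarily far out $g$ moves points up, moves points down, or both. Concretely, I would say $g$ is \emph{positive near $+\infty$} if for every $x$ there is $z>x$ with $gz>z$, and \emph{negative near $+\infty$} if for every $x$ there is $z>x$ with $gz<z$ (and symmetrically near $-\infty$). Because $g$ is not right-bounded, at least one of ``positive near $+\infty$'' and ``negative near $+\infty$'' holds; likewise at least one holds near $-\infty$. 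This gives a small finite case split, and in each case the relevant hypothesis of one of the three propositions is met by $g$, by $g^{-1}$, or after first replacing $g$ by a suitable conjugate/product.

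**The case analysis.** If $g$ is positive near both $+\infty$ and $-\infty$, then $g$ satisfies the hypothesis of Proposition \ref{oneorbitalcase1} directly, so some $gh^{-1}gh$ — a product of two conjugates of $g$ — has a single $+$-orbital; applying the same to $g^{-1}$ (which is then negative near both ends, hence after noting the symmetric version, or by taking the inverse of the resulting element) yields a single $-$-orbital. If $g$ is positive near $+\infty$ and negative near $-\infty$, Proposition \ref{2orbitals} produces $h$ with $gh^{-1}gh$ having a $+$-orbital unbounded above and a $-$-orbital unbounded below; feeding that element into Proposition \ref{oneorbitalcase2} then yields a commutator $[gh^{-1}gh, k]$ with a single $+$-orbital, and this whole expression is a product of conjugates of $g$ and $g^{-1}$. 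The remaining two cases are the mirror images: negative near $+\infty$ and positive near $-\infty$ is handled by applying the reflected version of Proposition \ref{2orbitals} (or by conjugating $g$ by an order-reversing map is not available, so one instead runs the back-and-forth of \ref{2orbitals} with inequalities flipped — the same proof works), and negative near both ends is the $g^{-1}$-version of the first case. In every branch one obtains a single $+$-orbital; to get a single $-$-orbital one simply takes the inverse of the element produced, since the inverse of an automorphism with a single $+$-orbital has a single $-$-orbital, and the inverse of a product of conjugates of $g,g^{-1}$ is again such a product.

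**Main obstacle.** The genuine content is already packaged into the three propositions, so the expected difficulty here is not technical but organizational: making sure the finitely many cases are genuinely exhaustive (this rests on the simple observation that ``not right-bounded'' forces at least one of the two behaviors near $+\infty$, and similarly at $-\infty$), and checking that in the mixed cases the propositions' hypotheses really do apply to $g$ itself rather than to some auxiliary map — in particular that Proposition \ref{oneorbitalcase2} can be applied to the output of Proposition \ref{2orbitals} since that output visibly has a $+$-orbital unbounded above and a $-$-orbital unbounded below. I would also remark explicitly that one of the mirror cases needs the ``left-right reflected'' analogue of Proposition \ref{2orbitals}; since $\Mlin$ need not admit an order-reversing automorphism, this analogue must be justified by observing that its proof is the verbatim mirror of the given one (every $<$ replaced by $>$, every ``unbounded above'' by ``unbounded below''), using Corollary \ref{exofreal} parts (iii)–(iv) in place of (i)–(ii). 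With that understood, the proof is just a short bookkeeping argument assembling these pieces.
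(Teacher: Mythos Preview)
Your proposal is correct and follows the same four-case analysis as the paper, invoking Propositions \ref{oneorbitalcase1}, \ref{2orbitals}, and \ref{oneorbitalcase2} in exactly the same configuration. The one simplification you miss: for the mirror case (negative near $+\infty$, positive near $-\infty$) the paper does not appeal to a reflected version of Proposition \ref{2orbitals} at all, but simply observes that $g^{-1}$ then satisfies the \emph{original} hypothesis of Proposition \ref{2orbitals} (since $ga<a$ implies $g^{-1}a>a$ on the same orbital) and runs the case-3 argument on $g^{-1}$ --- so no mirrored back-and-forth needs to be written out or justified.
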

\begin{proof}
Suppose $g$ is an automorphism such that for any $x\in \Mlin$, there exists $y<x$ and $z>x$ such that $gy>y$ and $gz>z$. Then, by Proposition \ref{oneorbitalcase1}, there exist $h \in Aut(\Mlin)$ such that $ghgh^{-1}$ has a single $+$-orbital. Suppose $g$ is an automorphism such that for any $x\in \Mlin$, there exists $y<x$ and $z>x$ such that $gy<y$ and $gz<z$. Then, by applying Proposition \ref{oneorbitalcase1} on $g^{-1}$, we can find $h \in Aut(\Mlin)$ such that $g^{-1}hg^{-1}h^{-1}$ has a single $+$-orbital.

Suppose for all $x \in \Mlin$, there exists $y>x$ and $z<x$ such that $gy>y$ and $gz<z$. Then, by Proposition \ref{2orbitals}, we can find $h \in Aut(\Mlin)$ such that $gh^{-1}gh$ has a $+$-orbital unbounded above and a $-$-orbital unbounded below. Then by the previous proposition, we can find $k \in Aut(\Mlin)$ such that $[gh^{-1}gh,k]$ has a single $+$-orbital. Suppose for all $x \in \Mlin$, there exists $y>x$ and $z<x$ such that $gy<y$ and $gz>z$. Then, we apply the same argument on $g^{-1}$ and find $h,k \in Aut(\Mlin)$ such that $[g^{-1}h^{-1}g^{-1}h,k]$ has a single $+$-orbital.

Since $g$ is unbounded, these are the only cases we need to consider. Hence for any $g\in Aut(\Mlin)$, there exists a product of conjugates of $g$ and $g^{-1}$ that has a single $+$-orbital. We can take the inverse of the product to obtain a product of conjugates of $g$ and $g^{-1}$ that has a single $-$-orbital. 
\end{proof}

\begin{lem}\label{nofixedset}
$\M$ be a non-trivial countable relational homogeneous $\lan$-structure such that $Aut(\M)$ $\neq Sym(\M)$. Let $\Mlin$ be its linear order expansion. Then for any non-trivial $g\in Aut(\Mlin)$, $g$ does not have an open interval $I=(x,y)$ such that $ga=a$ for all $a\in I$.
\end{lem}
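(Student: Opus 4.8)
The plan is to argue by contradiction: suppose $g\in Aut(\Mlin)$ fixes every point of an open interval $I=(x,y)$ (with $x<y$) and $g\neq\mathrm{id}$, and derive a contradiction. The first step is an order-theoretic observation forcing $g$ to fix the endpoints as well. Since $g$ preserves $<$ and $x<u$ for every $u\in I$, we get $gx<gu=u$ for all $u\in I$, so $gx\leq x$; applying the same reasoning to $g^{-1}$ gives $g^{-1}x\leq x$, hence $x\leq gx$, and therefore $gx=x$, and symmetrically $gy=y$. So $g$ fixes $[x,y]$ pointwise, and, again using that $g$ preserves $<$ and fixes $x,y$, the rays $(-\infty,x)$ and $(y,\infty)$ are each mapped onto themselves by $g$.

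Next I would reduce to a single misplaced point. Since $g\neq\mathrm{id}$ there is $a\notin[x,y]$ with $ga\neq a$; because $g$ fixes $[x,y]$ and preserves $<$, the elements $a$ and $ga$ lie strictly on the same side of $I$ (both above $y$, or both below $x$). In particular $tp^{<}(a/I)=tp^{<}(ga/I)$. On the other hand $g$ fixes $I$ pointwise and sends $a$ to $ga$, so $tp(a/I)=tp(ga/I)$ and hence $tp^{\lan}(a/\bar e)=tp^{\lan}(ga/\bar e)$ for every finite $\bar e\subseteq I$. Thus, by Remark \ref{septypermk}, it suffices for a contradiction to exhibit a finite tuple $\bar e\subseteq I$ with $tp^{\lan}(a/\bar e)\neq tp^{\lan}(ga/\bar e)$.

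Producing such a separating tuple is the heart of the proof, and the only place the hypothesis ``$\M$ non-trivial, $Aut(\M)\neq Sym(\M)$'' is really used. There is a relation $R\in\lan$, of some arity $k$, and --- interpolating between an $R$-tuple and a non-$R$-tuple and using strong amalgamation --- a finite set $B$ carrying two distinct $\lan$-types distinguished by an instance of $R$. Using the Extension Property together with Lemma \ref{epininterval} (applied coordinate by coordinate, and with the independence of $\lan$-type and $<$-type from Remark \ref{septypermk}), one places inside $I$ a $(k-1)$-tuple $\bar e$ witnessing an $R$-instance to $a$ but not to $ga$: in the free case this uses freeness, which allows one to adjoin to $I$ a tuple carrying a single prescribed $R$-instance and otherwise unrelated, in particular unrelated to $ga$; in the case of the universal $n$-linear order ($n\geq 2$) it suffices to thread a point of $I$ strictly between $a$ and $ga$ in one of the auxiliary orders $<_i$, which is possible because $a\neq ga$ and the orders are mutually generic. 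Either way $tp^{\lan}(a/\bar e)\neq tp^{\lan}(ga/\bar e)$, contradicting the previous paragraph, so $g=\mathrm{id}$.

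The main obstacle is exactly this separation step: it is genuinely false without non-triviality (for instance for $\M$ a disjoint union of two infinite cliques, two distinct vertices of the same clique lying above $I$ have the same type over $I$), so the argument has to invoke the specific amalgamation behaviour of the structures in hand rather than homogeneity alone. I would therefore isolate it as an auxiliary claim --- any two distinct elements of $\Mlin$ lying to the same side of a non-empty open interval $J$ can be $\lan$-separated by a tuple from $J$ --- and prove it separately for linear order expansions of free homogeneous structures and for the universal $n$-linear orders, then feed it into the argument above.
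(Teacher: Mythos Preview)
Your core idea---separating two distinct points of $\Mlin$ by a tuple lying entirely inside $I$, then using that $g$ fixes that tuple---is exactly the paper's. The paper's proof is more direct than yours: it skips the endpoint and same-side analysis and simply asserts that for \emph{any} $b\neq c$ the Extension Property yields $\bar a\in I^{n-1}$ with $R(\bar a,b)\wedge\neg R(\bar a,c)$; since $g\bar a=\bar a$ this forces $gb\neq c$, hence $gb=b$ for every $b$. Your preliminary order-theoretic steps are correct but not needed for this argument.

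Where you diverge from the paper is in your caution about the separation step, and you are right to be cautious. Your two-cliques example is valid and shows that the lemma, as stated for arbitrary non-trivial homogeneous $\M$ with $Aut(\M)\neq Sym(\M)$, is actually false: two points of the same clique lying above $I$ cannot be $\lan$-separated by anything in $I$, and one can build a non-trivial automorphism of $\Mlin$ fixing $I$ pointwise. The paper's one-line appeal to the Extension Property glosses over this. Your plan to isolate the separation claim and prove it only for the two classes the paper actually uses (free structures and universal $n$-linear orders) is the honest way to proceed; just be aware that even in the free case your sketch tacitly needs a relation of arity at least $2$---with only unary predicates the same obstruction reappears---so you should make that hypothesis (or transitivity of $Aut(\M)$) explicit when you write it up.
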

\begin{proof}
Since $\M$ is non-trivial, there is a non-trivial $n$-ary relation $R\in \lan$. Suppose there exists such an open interval $I$. Then for any $b \neq c \in \Mlin$, by the Extension Property, there exists $\bar{a} \in I^{n-1}$ such that $R(\bar{a},b)$ and $\neg R(\bar{a},c)$. Since $g\bar{a}=\bar{a}$, we have $gb \neq c$. Therefore, $gb=b$ for any $b\in \Mlin$, contradicting the non-triviality of $g$.
\end{proof}

\begin{rmk}\label{nontrivialunbounded}
Hence, by the previous lemma, if $\Mlin$ is not the dense linear order, then any non-trivial automorphism of $\Mlin$ is unbounded.
\end{rmk}

\begin{coro}
Let $\M$ be a non-trivial countable homogeneous $\lan$-structure and $\Mlin$ be its linear order expansion. Let $g \in Aut(\Mlin)$ be non-trivial. Then there exists a product of conjugates of $g$ and $g^{-1}$ that has a single $+$-orbital. Similarly, there exists a product of conjugates of $g$ and $g^{-1}$ that has a single $-$-orbital.
\end{coro}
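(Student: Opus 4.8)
The plan is to derive this corollary immediately from Theorem~\ref{oneorbitalgeneral}, once we know that every non-trivial element of $Aut(\Mlin)$ is unbounded. Theorem~\ref{oneorbitalgeneral} already produces, for every \emph{unbounded} $g$, a product of conjugates of $g$ and $g^{-1}$ with a single $+$-orbital, together with the companion statement for a single $-$-orbital, so no new back-and-forth construction is needed here; the work is purely the reduction ``non-trivial $\Rightarrow$ unbounded''.

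First I would observe that, since $\M$ is non-trivial, the language $\lan$ carries a relation symbol $R$ whose interpretation in $\M$ is non-trivial; in particular $\Mlin$ is a genuine proper expansion of the dense linear order and is not the structure $(\rat,\le)$ itself. Consequently Lemma~\ref{nofixedset} applies (its proof uses only the existence of such an $R$), so no non-trivial $g\in Aut(\Mlin)$ fixes pointwise any open interval $(x,y)$. If $g$ were right-bounded, say $gb=b$ for all $b>a$, then $g$ would fix pointwise the open interval $(a,a')$ for any $a'>a$, contradicting Lemma~\ref{nofixedset}; the same reasoning rules out left-boundedness. Hence every non-trivial $g\in Aut(\Mlin)$ is unbounded, which is exactly the content of Remark~\ref{nontrivialunbounded}.

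Now I would simply feed an arbitrary non-trivial $g\in Aut(\Mlin)$ into Theorem~\ref{oneorbitalgeneral}: since $g$ is unbounded, that theorem yields a product of conjugates of $g$ and $g^{-1}$ with a single $+$-orbital, and its second assertion gives a (possibly different) product of conjugates of $g$ and $g^{-1}$ with a single $-$-orbital, completing the proof.

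There is no substantive obstacle: the only point demanding (a one-line) argument is the reduction step — confirming that non-triviality of $\M$ forbids $\Mlin=(\rat,\le)$ and hence licenses the use of Lemma~\ref{nofixedset} and Remark~\ref{nontrivialunbounded} to upgrade ``non-trivial'' to ``unbounded'' — after which Theorem~\ref{oneorbitalgeneral} does all the work.
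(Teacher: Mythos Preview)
Your proposal is correct and follows exactly the approach the paper intends: the corollary is stated without proof immediately after Remark~\ref{nontrivialunbounded}, and is meant to be the straightforward combination of that remark (non-trivial $g$ is unbounded since $\Mlin$ is not the dense linear order) with Theorem~\ref{oneorbitalgeneral}.
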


For an unbounded automorphism $g \in Aut(\Mlin)$, we have constructed $h\in Aut(\Mlin)$ as a product of conjugates of $g$ and $g^{-1}$ such that $ha>a$ for all $a \in \Mlin$. We now prove that such $h$ has some special property. We first define the following notion. 

\begin{dfn}
Let $A$ be a finite subset of $\Mlin$ and $b\in \Mlin \setminus A$. We definte the \emph{upper constraint} of $b$ with respect to $A$ to be $\min_{a\in A} \{a>b\}$ and the \emph{lower constraint} of $b$ with respect to $A$ to be $max_{a\in A} \{a<b\}$.
\end{dfn}

\begin{lem}\label{move}
Let $\M$ be a countable homogeneous $\lan$-structure with a $\mathbf{SWIR}$ $\indst$ and $\Mlin$ be its linear order expansion. Let $p$ be an $n$-type over some finite set $X$ and $B$ be some finite subset of $\Mlin$.
\begin{enumerate}[(i)]
\item Let $g \in Aut(\Mlin)$ be such that $ga < a$ for all $a\in \Mlin$. Then there exists $\bar{a}$ realising $p$ such that $\bar{a} \ind _X B$ and if $a_i \leq ga_j$ for some $a_i, a_j \in \bar{a}$, then there exists $x \in X$ such that $a_i \leq x\leq ga_j$.
\item Let $g \in Aut(\Mlin)$ be such that $ga > a$ for all $a \in \Mlin$. Then there exists $\bar{a}$ realising $p$ such that $B \ind _X \bar{a}$ and if $a_i \geq ga_j$ for some $a_i, a_j \in \bar{a}$, then there exists $x \in X$ such that $a_i \geq x\geq ga_j$.
\end{enumerate}
\end{lem}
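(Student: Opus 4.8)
The plan is to prove (i) by an explicit coordinate‑by‑coordinate construction and then obtain (ii) by the mirror‑image argument (reversing the linear order), so I expect essentially all the work to be in (i). The first move is to peel off the $\lan$‑content: by the Existence axiom for $\indst$ on $\M$, fix once and for all a realisation $\bar a^{\star}=(a^{\star}_1,\dots,a^{\star}_n)$ of the $\lan$‑reduct $p^{\lan}$ of $p$ over $X$ with $\bar a^{\star}\indst_X B$ (applied to the $\lan$‑reduct of $B$). By Remark \ref{septypermk} it then remains to place the coordinates of $\bar a$ so that, at the end, $X\cup B\cup\bar a$ is $\lan$‑isomorphic over $X\cup B$ to $X\cup B\cup\bar a^{\star}$ (which yields $\bar a\models p^{\lan}$ and $\bar a\indst_X B$), the $\{<\}$‑type of $\bar a$ over $X$ is $p^{<}$, each coordinate lies above every element of $B$ in its own $X$‑gap (which together with $\bar a\indst_X B$ gives $\bar a\ind_X B$), and the sandwiching condition holds.

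I would relabel so that $p\vdash x_1<\dots<x_n$ and place $a_n,a_{n-1},\dots,a_1$ in this order, i.e.\ from the top down, using Lemma \ref{epininterval} at each step. Suppose $a_n,\dots,a_{k+1}$ are placed, so that $ga_n,\dots,ga_{k+1}$ are determined and each lies strictly below the corresponding $a_m$ (this is where $ga<a$ enters). Let $G_k=(u^-_k,u^+_k)$ be the $X$‑gap into which $p$ forces $x_k$, and set
\[
\alpha_k=\max\Bigl(\{u^-_k\}\cup(B\cap G_k)\cup\{\,ga_m:m>k,\ ga_m\in G_k\,\}\Bigr),\qquad
\beta_k=\min\Bigl(\{u^+_k\}\cup\{\,a_m:m>k,\ a_m\in G_k\,\}\Bigr).
\]
I would then realise, inside the interval $(\alpha_k,\beta_k)$, the $1$‑type over $X\cup B\cup\{a_{k+1},\dots,a_n\}$ whose $\lan$‑part is the transport of $tp^{\lan}(a^{\star}_k/X\cup B\cup\{a^{\star}_{k+1},\dots,a^{\star}_n\})$ along the inductively available $\lan$‑isomorphism ($a_m\mapsto a^{\star}_m$, identity on $X\cup B$), and whose $\{<\}$‑part places $x_k$ in $G_k$, below each $a_m$ $(m>k)$, and above each element of $B\cap G_k$. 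By Remark \ref{septypermk} this $1$‑type is consistent and its $\{<\}$‑part is compatible with $x_k\in(\alpha_k,\beta_k)$, so Lemma \ref{epininterval} supplies the desired $a_k$ provided $(\alpha_k,\beta_k)\neq\emptyset$.

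The one point that needs care — and the only place the construction could fail — is the non‑emptiness $\alpha_k<\beta_k$, which I expect to be the main obstacle. When $\beta_k=u^+_k$ it is immediate, since $\alpha_k$ is a maximum of finitely many elements each lying strictly below $u^+_k$. When $\beta_k=a_{k+1}$ with $a_{k+1}\in G_k$, the crux is that, by the inductive choice at the previous step, $a_{k+1}$ was already placed above every $ga_m$ with $m>k+1$ lying in $G_{k+1}=G_k$ (distinct $X$‑gaps being disjoint), and $ga_{k+1}<a_{k+1}$; together with $u^-_k<a_{k+1}$ and with $a_{k+1}$ lying above $B\cap G_k$, this forces $\alpha_k<a_{k+1}=\beta_k$. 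Granting this, the construction goes through, and the sandwiching condition follows by inspection: if $a_i\le ga_j$ then necessarily $i<j$ (as $ga_j<a_j$), so $ga_j$ was present when $a_i$ was placed; if $ga_j\in G_i$ then $a_i$ was chosen above $ga_j$, contradicting $a_i\le ga_j$, and if $ga_j\notin G_i$ then $ga_j$ lies either in $X$ or in a strictly higher $X$‑gap than $a_i$ (it cannot lie lower, else $ga_j<a_i$), so in either case an element of $X$ lies in $[a_i,ga_j]$. Statement (ii) is the mirror image: place the coordinates from the bottom up, keeping each coordinate below the relevant already‑determined images $ga_m$; equivalently, apply (i) to the linear order of $\Mlin$ read backwards, where $\indst$ is replaced by its opposite relation (still a $\mathbf{SWIR}$, by the symmetry of the axioms) and "$ga>a$'' becomes "$ga<a$''. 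I anticipate no extra difficulty there.
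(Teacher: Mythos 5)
Your proposal is correct, and at the strategic level it is the same induction as the paper's: order the coordinates decreasingly and place them one at a time, arranging at each step that the new coordinate is "independent'' from $B$ and from the images $g a_m$ of the already-placed coordinates over $X$ together with those coordinates, so that any violation $a_i<ga_j$ gets pushed down through earlier coordinates until an element of $X$ intervenes. The difference is in execution: the paper works abstractly with the derived $\mathbf{SWIR}$ $\ind$ on $\Mlin$, obtaining $a_k$ from the Existence axiom applied over the base $\bar a^{k-1}X$ with right-hand side $Bg\bar a^{k-1}$ and then using Monotonicity and Transitivity to assemble $\bar a^k\ind_X B$ and the separation property; you instead unwind the definition of $\ind$, fix an $\lan$-template $\bar a^{\star}$ via Existence for $\indst$ on $\M$, and realise each coordinate explicitly in an interval $(\alpha_k,\beta_k)$ via Lemma \ref{epininterval}, which amounts to re-proving the relevant instance of Existence for $\ind$ by hand. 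Your verification of the one delicate point, $\alpha_k<\beta_k$, is sound (when $\beta_k=a_{k+1}$ the inductive choice of $a_{k+1}$ above $B\cap G_{k+1}$ and above the relevant $ga_m\in G_{k+1}=G_k$, plus $ga_{k+1}<a_{k+1}$, does the job), the transfer of $\bar a^{\star}\indst_X B$ to $\bar a\indst_X B$ via homogeneity of $\M$ and Invariance is legitimate, and the reduction of (ii) to (i) by reversing the order and passing to the opposite relation (still a $\mathbf{SWIR}$) is a clean alternative to the paper's "symmetric argument.'' The paper's route is shorter and reuses the machinery already built in Theorem \ref{swirfromswir}; yours is more elementary and makes the order-theoretic bookkeeping visible, at the cost of redoing part of that machinery. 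Only cosmetic caveats remain: handle the boundary case where a gap $G_k$ is unbounded (so $u_k^{\pm}$ may not exist and Lemma \ref{epininterval} needs an auxiliary endpoint), and the case of algebraic variables $x_k\in X$, both of which are trivial reductions that the paper also elides.
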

\begin{proof}
(i) Let $p$ be an $n$-type over some finite set $X$. We can write $p=$tp$(x_1,\\...,x_n/X)$ where $x_1>\cdots >x_n$. Since $\bar{x} \cap X=\emptyset$, we can always choose $\bar{a}$ realising $p$ such that $\bar{a}X \cap g\bar{a}=\emptyset$. Let $p_k=$tp$(x_1,...,x_k/X)$ for $k=1,...,n$. We find $a_k$ inductively. For the inductive base, by Existence, we can choose $a_1$ realising $p_1$ such that $a_1 \ind _X B$. Then $ga_1 <a_1$. Now assume there exists $\bar{a}^{k-1}=(a_1,...,a_{k-1})$ satisfying 
(i) $\bar{a}^{k-1}$ realises $p_{k-1}$, 
(ii) $\bar{a}^{k-1} \ind _X B$, and 
(iii) if $a_i < ga_j$ for some $1\leq i,j \leq k-1$, then there exists $x \in X$ such that $a_i <x<ga_j$. 

By Existence, we can find $a_{k}$ such that $a_{k}$ realises $p_k(\bar{a}^{k-1},x_k/X)$ and $a_k \ind _{\bar{a}^{k-1}X} Bg\bar{a}^{k-1}$. Then by Monotonicity, we have $a_k \ind _{\bar{a}^{k-1}X} B$ and by Transitivity on it and the inductive hypothesis (ii) $\bar{a}^{k-1} \ind _X B$, we have $\bar{a}^{k-1}a_k \ind _{X} B$. We also have $a_k \ind _{\bar{a}^{k-1}X} g\bar{a}^{k-1}$ by Monotonicity. Then for any $1\leq j \leq k-1$, if $a_k< ga_j$, then either there exists $a_i \in \bar{a}^{k-1}$ such that $a_k<a_i<ga_j$ or there exists $x \in X$ such that $a_k <x<ga_j$. In the former case, by the inductive hypothesis (iii), there exists $x \in X$ such that $a_k<a_i <x<ga_j$. Hence, in both case, we can find $x \in X$ such that $a_k <x<ga_j$. We also have that $ga_k<a_k<a_i$ for all $i=1,...,k-1$. Thus, we have that if $a_i < ga_j$ for some $1\leq i,j \leq k$, then there exists $x \in X$ such that $a_i <x<ga_j$. Therefore, we have found $\bar{a}^{k}=(a_1,...,a_{k})$ satisfying  the inductive hypothesis (i)-(iii) for $k$.

By induction, we can find a realisation of $p$ satisfying the required property. By a symmetric argument, we can prove part (ii). 
\end{proof}

\begin{rmk}\label{nointersection}
It follows from the proof that if the type $p$ is of the form $tp(\bar{x}/X)$ such that $X$ is finite and for all $x_i \in \bar{X}$, $x_i \notin X$, then we can find a realisation $\bar{a}$ such that $\bar{a} \cap g\bar{a}=\emptyset$.
\end{rmk}

\section{Linearly ordered free homogeneous structures}

In this section, we let $\M$ be a non-trivial free homogeneous structure and $\Mlin$ be its linear order expansion. We can define a $\mathbf{SWIR}$ $\indst$ on $\M$ by defining $A \indst _B C$ for finite $A,B,C \subseteq \M$, if for any $a \in A \setminus B, c\in C \setminus B$, $(a,c)$ is not related by any relation in $\lan$. Then, by Theorem \ref{swirfromswir}, we can find a $\mathbf{SWIR}$ $\ind$ on $\Mlin$ by defining $A \ind _B C$ if for any $a \in A \setminus B, c\in C \setminus B$, $(a,c)$ is not related by any relation in $\lan$ and if $a<c$, then there exists $b\in B$ such that $a<b<c$.

Now for any non-trivial $g \in Aut(\Mlin)$, we want to find a product of conjugates of $g$ and $g^{-1}$ that moves $L$-maximally and its inverse moves $R$-maximally. We do this by first proving the following lemma and employ results from the previous section.

\begin{lem}\label{deletesupport}
Let $\Mlin$ be the linear order expansion of a non-trivial free homogeneous structure. Let $A,B,B',C$ be finite substructures of $\Mlin$ such that $A \cap B =\emptyset$ and $C \cap B= \emptyset$. Suppose $A \ind _{BB'} C$ and for any $b \in B\setminus B', c \in C \setminus B'$, if $b<c$, then there exists $b' \in B$ such that $b<b'<c$. Then $A \ind_{B'}  C$. 
\end{lem}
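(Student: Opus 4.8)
The plan is to check directly the two clauses in the definition of $\ind$ on $\Mlin$ for the triple $A, B', C$, exploiting the disjointness hypotheses to trivialise the set-theoretic bookkeeping and then invoking finiteness of $B$ to relocate a betweenness witness inside $B'$.

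First I would record the bookkeeping reduction. Since $A \cap B = \emptyset$ and $C \cap B = \emptyset$, we have $A \setminus B' = A \setminus (BB')$ and $C \setminus B' = C \setminus (BB')$. So fixing an arbitrary $a \in A \setminus B'$ and $c \in C \setminus B'$ is the same as fixing $a \in A \setminus (BB')$ and $c \in C \setminus (BB')$. In particular the non-relatedness clause required for $A \ind_{B'} C$ -- that $(a,c)$ is not related by any relation in $\lan$ -- is literally the non-relatedness clause supplied by the hypothesis $A \ind_{BB'} C$. Hence the only thing left to prove is the betweenness clause: whenever $a < c$, there is an element of $B'$ strictly between $a$ and $c$.

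Next, assuming $a < c$, the hypothesis $A \ind_{BB'} C$ yields some $d \in B \cup B'$ with $a < d < c$. If $d \in B'$ we are done, so assume $d \in B \setminus B'$. Here I would use that $B$ is finite: let $b_0$ be the largest element of $B$ with $b_0 < c$ (the set of such elements is nonempty since it contains $d$). Then $a < d \le b_0 < c$, so it suffices to show $b_0 \in B'$. If instead $b_0 \in B \setminus B'$, then applying the extra hypothesis to the pair $b_0 \in B \setminus B'$ and $c \in C \setminus B'$ with $b_0 < c$ produces $b' \in B$ with $b_0 < b' < c$, contradicting maximality of $b_0$. Therefore $b_0 \in B'$ and $a < b_0 < c$, which completes the verification of $A \ind_{B'} C$.

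The one step that needs care is exactly this last one: the betweenness witness handed to us lives a priori only in $B \cup B'$ and may fall in $B \setminus B'$, so it is not automatic that it can be replaced by a member of $B'$. The supplementary hypothesis relating $B \setminus B'$ to $C \setminus B'$, combined with finiteness of $B$, is precisely what bridges this gap -- by pushing the witness up to the top element of $B$ lying below $c$ and then forcing that element into $B'$. (If one instead reads the hypothesis as already furnishing $b' \in B'$, the argument shortens: from $d \in B \setminus B'$, $c \in C \setminus B'$, $d < c$ one gets $b' \in B'$ with $a < d < b' < c$ at once.)
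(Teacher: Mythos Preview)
Your proof is correct, and in fact more careful than the paper's own argument. The paper's proof simply asserts that in the case $b\in B\setminus B'$ with $a<b<c$, ``by the assumption, there exists $b'\in B'$ such that $a<b<b'<c$'' --- i.e.\ it reads the extra hypothesis as producing a witness in $B'$ rather than in $B$. That is almost certainly the intended statement (and is what is actually used when the lemma is applied later), so the paper's one-line argument matches exactly your parenthetical remark at the end.

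Your main argument, by contrast, takes the hypothesis literally as written (witness in $B$) and repairs the gap via the maximality trick: pass to the largest $b_0\in B$ below $c$ and observe that $b_0\in B\setminus B'$ would contradict maximality. This is a genuine strengthening --- you prove the lemma under a weaker hypothesis than the paper's proof uses --- and it is good that you flagged the ambiguity. In practice the intended reading is $b'\in B'$, so the extra work is not needed, but your version is the one that actually matches the stated lemma.
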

\begin{proof}
For $a \in A/B'$ and $c\in C/B'$, we have $a \in A/BB'$ and $c\in C/BB'$ since $A \cap B =\emptyset$ and $C \cap B= \emptyset$. Then since $A\ind _{BB'} C$, $(a,c)$ is not related by any relation. If $a <c$, then since $A\ind _{BB'} C$, either there exists $b\in B \setminus B'$ such that $a<b<c$ or there exists $b'\in B'$ such that $a<b'<c$. In the former case, by the assumption, there exists $b'\in B'$ such that $a<b<b'<c$. Therefore, we have $A \ind _{B'} C$.
\end{proof}

\begin{thm}
Let $g \in Aut(\M)$ be such that $ga < a$ for all $a \in \Mlin$. Then there exists $k \in Aut(\Mlin)$ such that $[k,g]$ moves almost $R$-maximally and $[g,k]$ moves almost $L$-maximally.
\end{thm}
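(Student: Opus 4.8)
The plan is to build $k$ by a back-and-forth argument, as the union of an increasing chain of finite partial isomorphisms of $\Mlin$, arranging the required genericity at designated stages. Put $f:=[k,g]=k^{-1}g^{-1}kg$ and $h:=k^{-1}g^{-1}k$, so that $f=hg$; since $g$ moves every point down, the conjugate $h$ of $g^{-1}$ moves every point up, and $f^{-1}=[g,k]=g^{-1}h^{-1}$ with $h^{-1}$ moving every point down. By Remark~\ref{septypermk} it suffices to meet the following two countable families of requirements: for the $R$-maximal conclusion, for every finite $X\subseteq\Mlin$ and every $n$-type $p$ over $X$ there is $\bar a\models p$ with $\bar a\ind_X f\bar a$; for the $L$-maximal conclusion, for every such $X,p$ there is $\bar a\models p$ with $f^{-1}\bar a\ind_X\bar a$ (recall $[g,k]=f^{-1}$). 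Enumerate all these requirements together with an enumeration $\Mlin=\{x_0,x_1,\dots\}$, and at each stage deal with one requirement and bring one further point into the domain and range of $k$.

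Consider a stage for an $R$-maximal requirement $(X,p)$, with finite partial isomorphism $\tilde k\colon D\to R$ built so far. Using Lemma~\ref{move}(i) — available because $ga<a$ for all $a$ — choose $\bar a\models p$ with $\bar a\ind_X B$ for a suitably large finite $B\supseteq D\cup g^{-1}D\cup gX$, and with the additional property that $a_i\le ga_j$ implies there is $x\in X$ with $a_i\le x\le ga_j$; by Remark~\ref{nointersection} we may also take $\bar a\cap g\bar a=\emptyset$. The point of this is that the type of $g\bar a$ over $D$ is then as free as possible over $gX$, so that it will be possible to match it on the other side. Now choose the intended value $\bar d$ of $f\bar a$: take $\bar d$ fresh, strictly below $\min(X\cup\bar a\cup B)$, and with no $\lan$-relation to $\bar a\cup X\cup D\cup\{g\bar a\}$, which is possible by the existence axiom of the $\mathbf{SWIR}$ on $\Mlin$ together with Lemma~\ref{epininterval} and Corollary~\ref{exofreal}. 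Since $\bar d$ lies below all of $\bar a$, the order part of $\bar a\ind_X\bar d$ is vacuous, and the absence of $\lan$-relations gives the rest, so $\bar a\ind_X\bar d$.

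It remains to force $f\bar a=\bar d$. As $f\bar a=k^{-1}g^{-1}kg\bar a$, this amounts to $k(g\bar a)=g(k\bar d)$, so we choose the intended value $\bar e$ of $k\bar d$ — take $\bar e$ below $\min R$ with no $\lan$-relation to $R$, and chosen so that in addition $g\bar e$ realises $\tilde k\bigl(tp(g\bar a/D)\bigr)$ over $R$ — and extend $\tilde k$ by $\bar d\mapsto\bar e$ and $g\bar a\mapsto g\bar e$. Then $k^{-1}g^{-1}kg\bar a=k^{-1}g^{-1}(g\bar e)=k^{-1}\bar e=\bar d$, as wanted; a final extension brings the least uncovered $x_m$ into the domain and range. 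The $L$-maximal requirements are handled symmetrically: now use Lemma~\ref{move}(ii) for $g^{-1}$ (which moves points up), set $\bar d$ to be the intended value of $f^{-1}\bar a=g^{-1}h^{-1}\bar a$, and place $\bar d$ \emph{above} everything relevant so that $\bar d\ind_X\bar a$ holds vacuously on the order side. Letting $k=\bigcup\tilde k$ gives an automorphism of $\Mlin$ with the desired properties, and then $f=[k,g]$ is exactly in the position to feed into Theorem~\ref{swirthm}.

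The step I expect to be the main obstacle is the verification, in the third paragraph, that $\tilde k\cup\{\bar d\mapsto\bar e,\ g\bar a\mapsto g\bar e\}$ really is a partial isomorphism — equivalently, that the generic types prescribed on the two sides correspond under $\tilde k$. Here one must combine the hypotheses carefully: $g$ being a fixed automorphism means the $\lan$-type and $<$-type of any $g$-translate is determined by the type of the original point; Stationarity of $\ind$ on $\Mlin$ (Theorem~\ref{swirfromswir}) pins down the ``below/above everything, no $\lan$-relations'' choices as realising the unique free type over the relevant finite base; and Lemma~\ref{deletesupport} is used to discard the redundant parameters that show up when one passes between independence over $gX$, over $D$, and over $X$. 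Consistency between stages is automatic: at each stage $k$ is newly committed only on the fresh tuples $\bar d$ and $g\bar a$ (sent to $\bar e$ and $g\bar e$), so there is never a clash, and the only real bookkeeping is to keep $B$, $\bar d$ and $\bar e$ clear of all the $g$-translates of $D$ and $R$ that occur.
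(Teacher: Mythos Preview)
Your back-and-forth framework matches the paper's, but the way you meet each requirement is different, and the difference creates a real gap at exactly the point you flag as ``the main obstacle''.

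You try to force $[k,g]\bar a=\bar d$ by extending $\tilde k$ with $\bar d\mapsto\bar e$ and $g\bar a\mapsto g\bar e$. For this to be a partial isomorphism you need $tp(\bar d,\,g\bar a/D)$ to correspond under $\tilde k$ to $tp(\bar e,\,g\bar e/R)$. But the quantifier-free type of the pair $(\bar e,g\bar e)$ --- the order relations and the $\lan$-relations between $\bar e$ and $g\bar e$ --- is dictated by the fixed automorphism $g$ once $\bar e$ is chosen; it is not something you can prescribe. Meanwhile $\bar d$ and $g\bar a$ were both fixed \emph{before} you picked $\bar e$, so their mutual relations are already set. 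Even in the $1$-tuple case: since $ge_1<e_1$ always, you would need $ga_1<d_1$, yet you placed $d_1$ below $\min(X\cup\bar a\cup B)$ with no control over $ga_1$; and you imposed no $\lan$-relation between $d_1$ and $ga_1$, whereas whether $e_1$ and $ge_1$ are $\lan$-related is up to $g$, not to you. The appeals to Stationarity and Lemma~\ref{deletesupport} do not address this: those tools control types over a base, not the $g$-internal relations of a pair $(\bar e,g\bar e)$.

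The paper sidesteps this by reversing the roles. It extends $\tilde k$ by $\bar a\mapsto\bar b$ and $\bar c\mapsto g\bar b$: the pair carrying the $g$-constraint, namely $(\bar b,g\bar b)$, sits on the \emph{range} side, and the matching domain element $\bar c$ is then chosen freely to realise $\tilde k^{-1}\cdot tp(g\bar b/\bar bB)$ together with the genericity $\bar c\ind_{\bar aA}g\bar a$. Because $\bar c$ is a free choice, the extension is automatically a partial isomorphism. Crucially, the paper never tries to name $[k,g]\bar a$: instead it establishes $\bar c\ind_{\tilde k^{-1}g\tilde kX}g\bar a$ (using Lemma~\ref{deletesupport} to drop $\bar a$ from the base, then Transitivity) and applies Invariance under $k^{-1}g^{-1}k$ to obtain $\bar a\ind_X[k,g]\bar a$. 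That indirection is the idea your argument is missing.
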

\begin{proof}
We use the back-and-forth method. List all types as $p_1,p_2,...$. Suppose at some stage, we have a partial isomorphism $\tilde{k}:A \rightarrow B$ such that $[\tilde{k},g]$ moves $p_1,...,p_{i-1}$ almost $R$-maximally and $[g,\tilde{k}]$ moves $p_1,...,p_{i-1}$ almost $L$-maximally. Suppose $p_i=tp(\bar{x}/X)$. We may assume $p_i$ is non-algebraic and $x_i \notin X$ for all $x_i \in \bar{x}$. We may also assume that $X \cup g(X) \subseteq A$, $g\tilde{k} X \subseteq B$ by extending $\tilde{k}$.

Step 1. We extend $\tilde{k}$ so that $[ \tilde{k},g]$ moves $p_i$ almost $R$-maximally. By Lemma \ref{move}, there exists $\bar{a}$ realising $p_i$ such that $\bar{a} \ind _X A$  and if $a_i < ga_j$ for some $a_i, a_j \in \bar{a}$, then there exists $x \in X$ such that $a_i <x<ga_j$. By Remark \ref{nointersection}, we also have $\bar{a} \cap g\bar{a} = \emptyset$. By Existence and the fact that $tp(\bar{a}/A)$ is non-algebraic, there exists $\bar{b}$ realising $\tilde{k} \cdot tp(\bar{a}/A)$ such that $\bar{b} \ind _B g^{-1}B$ and $\bar{b} \cap g\bar{b} =\emptyset$. Extend $\tilde{k}$ by sending $\bar{a}$ to $\bar{b}$. By acting $\tilde{k}$ on $\bar{a} \ind _X A$, we get $\bar{b} \ind _{\tilde{k} X} B$.

Again by Existence, there exists $\bar{c}$ realising $ \tilde{k}^{-1} \cdot  tp(g\bar{b}/\bar{b}B)$ such that $\bar{c} \ind _{\bar{a}A} g\bar{a}$. Extend $\tilde{k}$ by sending $\bar{c}$ to $g\bar{b}$. Then we have $\bar{c}\cap \bar{a}=\emptyset$ from $\bar{b} \cap g\bar{b} =\emptyset$. Since if $a_i < ga_j$ for some $a_i, a_j \in \bar{a}$, then there exists $x \in X$ such that $a_i <x<ga_j$, we have $\bar{c} \ind _{A} g\bar{a}$ by Lemma \ref{deletesupport}. 

By Transitivity on $\bar{b} \ind _{\tilde{k} X} B$ and $\bar{b} \ind _B g^{-1}B$, we have $\bar{b} \ind _{\tilde{k} X}g^{-1} B$. Acting by $\tilde{k}^{-1}g$ on it, we get $\tilde{k}^{-1}g\bar{b} \ind _{\tilde{k}^{-1}g\tilde{k} X} \tilde{k}^{-1}B$, which can be simplified to $\bar{c} \ind _{\tilde{k}^{-1}g \tilde{k} X} A$. Since $ \tilde{k}^{-1}g \tilde{k} X \subseteq A $, we can apply Transitivity on $\bar{c} \ind _{\tilde{k}^{-1}g \tilde{k} X} A$ and $\bar{c} \ind _A g\bar{a}$ to obtain $ \bar{c} \ind _{\tilde{k}^{-1}g\tilde{k} X} g\bar{a}$. Acting by $\tilde{k}^{-1}g^{-1}  \tilde{k}$ on it, we have $\bar{a} \ind _X [\tilde{k},g] \bar{a}$.

Step 2. We extend $\tilde{k}$ so that $[g, \tilde{k}]$ moves $p_i$ almost $L$-maximally. Since $g^{-1}a>a$ for all $a \in \Mlin$, by Lemma \ref{move}(ii), we can find $\bar{a}$ realising $p$ such that $g^{-1}(A) \ind _X \bar{a}$ and if $a_i > g^{-1}a_j$ for some $a_i, a_j \in \bar{a}$, then there exists $x \in X$ such that $a_i >x>g^{-1}a_j$. Hence, if $ga_i > a_j$ for some $a_i, a_j \in a$, then there exists $x \in X$ such that $ga_i >gx>a_j$. We again have $\bar{a} \cap g\bar{a} = \emptyset$ by Remark \ref{nointersection}. By Invariance, we have $A \ind_{g(X)} g\bar{a}$.

By Existence and the fact that $tp(\bar{a}/A)$ is non-algebraic, we can find $\bar{b}$ realising $\tilde{k}\cdot tp(\bar{a}/A)$ such that $\bar{b}\cap g\bar{b}=\emptyset$ and $c$ realising $\tilde{k} ^{-1}\cdot tp(g\bar{b}/\bar{b}B)$ such that $\bar{c} \ind_{\bar{a}A}g\bar{a}$. Extend $\tilde{k}$ by sending $\bar{a}$ to $\bar{b}$ and $\bar{c}$ to $g\bar{b}$. Then we have $\bar{c} \cap \bar{a}=\emptyset$.

Since $g(X) \subseteq A$, $\bar{c} \cap \bar{a}=\bar{a} \cap g\bar{a} = \emptyset$ and if $ga_i > a_j$ for some $a_i, a_j \in \bar{a}$, then there exists $x \in X$ such that $ga_i >gx>a_j$, we obtain $\bar{c} \ind_{A}g\bar{a}$ from $\bar{c} \ind_{\bar{a}A}g\bar{a}$ by Lemma \ref{deletesupport}. Then by Transitivity on $A \ind_{g(X)} g\bar{a}$ and $\bar{c} \ind_{A}g\bar{a}$, we have $\bar{c}\ind_{g(X)} g\bar{a}$. Therefore, acting by $g^{-1}$ on it, we obtain $[g,\tilde{k}] \bar{a} \ind _X \bar{a}$.

We can also make sure $X \subset B$ by extending $\tilde{k}$. Let $k$ be the union of all $\tilde{k}$ over all types. Then $[k,g]$ moves almost $R$-maximally and $[g,k]$ moves almost $L$-maximally.
\end{proof}

We can now prove part (i) of Theorem \ref{main}.
\begin{proof}[Proof of part (i) of Theorem \ref{main}]
Let $\Mlin$ be the linear order expansion of a non-trivial free homogeneous structure. Let $g$ be a non-trivial automorphism of $\Mlin$. Then, by Theorem \ref{oneorbitalgeneral}, there exists $h\in Aut(\Mlin)$ as a product of conjugates of $g$ and $g^{-1}$, such that $ha <a$ for all $a \in \Mlin$. By the previous lemma, there exists $k \in Aut(\Mlin)$ such that $[k,h]$ moves almost $R$-maximally and $[h,k]$ moves almost $L$-maximally. Therefore, by Theorem \ref{swirthm}, any element of $Aut(\Mlin)$ can be written as a product of conjugates of $g$ and $g^{-1}$. Thus, $Aut(\Mlin)$ is simple.
\end{proof}

\section{Universal $n$-linear order}

\begin{dfn}
Let $\lan=\{<_1,...,<_n\}$. Let $\cl$ be the class of all finite $\lan$-structures, where $<_i$ is interpreted as a linear order for each $i =1,...,n$. We call the Fra{\"\i}ss{\'e} limit of $\cl$, denoted by $\M_n$, the \emph{universal $n$-linear orders}.
\end{dfn}

\begin{rmk}
For $n=1$, it is the dense linear order $(\rat, <)$. For $n=2$, it is called a \emph{universal permutation} in \cite{cameron2002homohpermutation} as a permutation can be interpreted as two linear orders on the underlying set.
\end{rmk}

By repeated application of Theorem \ref{swirfromswir}, starting with the trivial countable homogeneous structure, we can find a $\mathbf{SWIR}$ on the universal $n$-linear order. 

\begin{coro}
Let $\M_n$ be the universal $n$-linear order. For finite substructures $A,B,C\subseteq \M_n$, define $A \ind_B C$ if for any $a \in A \setminus B, c\in C \setminus B$ such that $a < _i c$ for some $i$, there exists $b\in B$ such that $a < _i b< _i c$. Then $\ind$ is a $\mathbf{SWIR}$ on $\M_n$. 
\end{coro}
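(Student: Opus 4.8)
The plan is to obtain the claimed $\mathbf{SWIR}$ on $\M_n$ by induction on $n$, using Theorem \ref{swirfromswir} repeatedly. For the base case $n=1$, the structure $\M_1$ is the dense linear order $(\rat,<)$, and the trivial countable homogeneous structure (with empty language) carries the trivial $\mathbf{SWIR}$ given by $A \indst_B C$ always; since $\M_1$ is the linear order expansion of the trivial structure, Theorem \ref{swirfromswir} tells us that $A \ind_B C$ iff for all $a \in A\setminus B$, $c\in C\setminus B$ with $a<c$ there is $b\in B$ with $a<b<c$, which is exactly the $n=1$ case of the asserted relation. So the base case is immediate.

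For the inductive step, I would observe that $\M_{n}$ is (isomorphic to) the linear order expansion of $\M_{n-1}$: take $\lan' = \{<_1,\dots,<_{n-1}\}$ and $\lan'^{<} = \lan' \cup \{<_n\}$, and note that the class of finite structures with $n$ linear orders is exactly the class of linear order expansions (in the sense of Definition \ref{linearorderexp}) of finite structures with $n-1$ linear orders — here one should check that the age of $\M_{n-1}$ is a strong amalgamation class, which holds since $\acl$ is trivial in $\M_{n-1}$ (any tuple's type over a finite set has infinitely many realisations, as one can always insert new points into finitely many dense linear orders). By uniqueness of Fra\"iss\'e limits, $\M_n \cong (\M_{n-1})^{<}$. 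Applying the inductive hypothesis, $\M_{n-1}$ carries a $\mathbf{SWIR}$ $\indst$ defined by: $A \indst_B C$ iff for all $a\in A\setminus B$, $c\in C\setminus B$ with $a<_i c$ for some $i \le n-1$, there is $b\in B$ with $a <_i b <_i c$. Theorem \ref{swirfromswir} then produces a $\mathbf{SWIR}$ on $(\M_{n-1})^{<} \cong \M_n$, namely $A\ind_B C$ iff $A\indst_B C$ and, in addition, for all $a\in A\setminus B$, $c\in C\setminus B$ with $a<_n c$ there is $b\in B$ with $a<_n b<_n c$. Unwinding, this is precisely the relation in the statement.

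The only real content to verify is the identification $\M_n \cong (\M_{n-1})^{<}$ together with the strong amalgamation (equivalently, trivial algebraic closure) of the age of $\M_{n-1}$, so that Theorem \ref{swirfromswir} applies. I expect this to be the main (though mild) obstacle: one must confirm that $\cl^{<}$ in the sense of Definition \ref{linearorderexp}, built from the class of finite $(n-1)$-linear orders, coincides with the class of all finite $n$-linear orders, i.e.\ that every linear order on a finite $(n-1)$-linear order is realised as an $\lan'^{<}$-structure in $\cl^{<}$ (true, since $\cl^{<}$ contains all isomorphism types of linear order expansions) and conversely that every finite $n$-linear order is such an expansion (clear). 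Everything else is a direct quotation of Theorem \ref{swirfromswir} and bookkeeping of the defining conditions, so I would simply write "by repeated application of Theorem \ref{swirfromswir}" and spell out the displayed form of $\ind$ as above.
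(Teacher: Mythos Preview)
Your proposal is correct and follows exactly the route the paper takes: the paper's entire argument is the single sentence ``By repeated application of Theorem \ref{swirfromswir}, starting with the trivial countable homogeneous structure,'' and you have simply spelled this out as an induction on $n$, identifying $\M_n$ with $(\M_{n-1})^{<}$ and checking the strong amalgamation hypothesis. The extra verifications you flag (trivial $\mathrm{acl}$ in $\M_{n-1}$, coincidence of $\cl^{<}$ with the class of finite $n$-linear orders) are indeed the only content, and they are routine.
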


We first look at the case where $n=1$. The normal subgroup structure of $Aut(\rat, \leq)$ is well-known. According to \cite{ball1985normal}, Higman showed that the subgroup consisting of all bounded (both left-bounded and right-bounded) automorphisms is simple in \cite{higman1951simple}. \cite{holland1963lattice} and \cite{lloyd1964lattice} studied the normal subgroup generated by unbounded automorphisms. We illustrate in the following how Theorem \ref{swirthm} can be applied to obtain an alternative proof of the same statement.

\begin{coro}
Let $g$ be an unbounded automorphism of $(\rat,\leq)$. Then the normal subgroup generated by $g$ is $Aut(\rat,\leq)$.
\end{coro}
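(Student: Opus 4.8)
The plan is to reuse, essentially verbatim, the strategy of the proof of part~(i) of Theorem~\ref{main}. The first observation is that $(\rat,\leq)$ is the linear order expansion $\Mlin$ of the trivial homogeneous structure $\M$ (empty relational language), and that it carries the $\mathbf{SWIR}$ $\ind$ given by the Corollary at the head of this section with $n=1$: namely $A\ind_B C$ iff for all $a\in A\setminus B$, $c\in C\setminus B$ with $a<c$ there is $b\in B$ with $a<b<c$. I would next check (or, if one prefers, restate the relevant statements for an arbitrary homogeneous structure with a $\mathbf{SWIR}$) that the machinery of Section~3 applies in this degenerate case: non-triviality of $\M$ entered Section~3 only through Remark~\ref{nontrivialunbounded}, to force non-trivial automorphisms to be unbounded, which here is a hypothesis, while Lemma~\ref{deletesupport} and the last theorem of Section~3 invoke only the $\mathbf{SWIR}$ axioms and the order, so their proofs carry over when $\lan=\emptyset$.

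Given an unbounded $g$, I would first apply Theorem~\ref{oneorbitalgeneral} to produce $h$, a product of conjugates of $g$ and $g^{-1}$, having a single $-$-orbital; by Remark~\ref{orbitalrmk}(iii) every point of $\rat$ then lies in that orbital, so $ha<a$ for all $a$. Feeding $h$ into the last theorem of Section~3 yields $k\in Aut(\rat,\leq)$ such that $[k,h]$ moves almost $R$-maximally and $[h,k]$ moves almost $L$-maximally. Since $[h,k]=[k,h]^{-1}$, regardless of commutator convention, I would set $f:=[k,h]$, so that $f$ moves almost $R$-maximally and $f^{-1}$ moves almost $L$-maximally; note $f$ is again a product of conjugates of $g$ and $g^{-1}$, being assembled from $h$ and $h^{-1}$ by conjugation and multiplication.

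Finally, Theorem~\ref{swirthm} applied to $f$ gives that every element of $Aut(\rat,\leq)$ is a product of conjugates of $f$, hence of conjugates of $g$ and $g^{-1}$; thus $Aut(\rat,\leq)$ lies in the normal subgroup generated by $g$, and the reverse inclusion is immediate. I expect the only real friction to be the bookkeeping of transferring the Section~3 results to the case $\lan=\emptyset$ — in particular making sure the proof of the $k$-construction never secretly appeals to a relation of $\M$, and that ``single $-$-orbital'' really does force $ha<a$ everywhere — but this is routine rather than substantive.
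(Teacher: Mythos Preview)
Your proposal is correct, but it takes a slightly more circuitous route than the paper's own proof. Both start the same way: apply Theorem~\ref{oneorbitalgeneral} to the unbounded $g$ to obtain a product $h$ of conjugates of $g$ and $g^{-1}$ with $ha<a$ for all $a\in\rat$. The divergence is in the next step. You invoke the full $k$-construction from the last theorem of Section~3 to manufacture $f=[k,h]$ moving almost $R$-maximally with $f^{-1}$ moving almost $L$-maximally. The paper instead observes that for the order-only $\mathbf{SWIR}$ on $(\rat,\leq)$, the conclusion of Lemma~\ref{move}(i) --- that there is $\bar{a}$ realising $p$ with $a_i<ha_j$ forcing some $x\in X$ between them --- is \emph{literally} the statement $\bar{a}\ind_X h\bar{a}$; hence $h$ itself already moves almost $R$-maximally, and symmetrically $h^{-1}$ moves almost $L$-maximally, so one passes directly to Theorem~\ref{swirthm} with no commutator needed.

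What your route buys is uniformity: you are running the Section~3 argument verbatim and only need to check that nothing in Lemma~\ref{deletesupport} or the $k$-construction secretly uses a non-trivial $\lan$-relation (it does not; the ``$(a,c)$ not related'' clauses become vacuous when $\lan=\emptyset$). What the paper's route buys is economy: it cuts out the entire back-and-forth construction of $k$ by noticing that in the degenerate $n=1$ case Lemma~\ref{move} already delivers the independence condition on the nose.
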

\begin{proof}
Let $g$ be an unbounded automorphism of $(\rat,\leq)$. By Theorem \ref{oneorbitalgeneral}, there exists $h\in Aut(\rat,\leq)$, a product of conjugates of $g$ and $g^{-1}$, such that $ha<a$ for all $a \in \rat$. Let $p$ be any type over some finite set $X\subseteq \rat$. Then by Lemma \ref{move}, we can find $\bar{a}$ realising $p$ such that if $a_i < ga_j$ for some $a_i \in \bar{a}\setminus X, ga_j \in g\bar{a}\setminus X$, then there exists $x \in X$ such that $a_i <x<ga_j$. Hence $\bar{a} \ind_X h\bar{a}$ by definition. Similarly, we can find $\bar{a}'$ realising $p$ such that $h^{-1}\bar{a}' \ind_X \bar{a}'$. So, $h$ moves $R$-maximally and $h^{-1}$ moves $L$-maximally. Therefore, by Theorem \ref{swirthm}, the normal subgroup generated by $g$ is $Aut(\rat,\leq)$ .
\end{proof}

For $n\geq 2$, we will use a similar approach to find an automorphism that moves almost $R$-maximally and its inverse moves almost $L$-maximally. We will do this by induction. We first define the following notion and prove the following lemmas.

\begin{dfn}
Let $\M_n$ be the universal $n$-linear order and $I \subseteq \{1,\ldots, n\}$. Define $\M_n^I$ to be the structure $\M_n$ but with the ordering $<_i$ reversed for every $i \in I$.
\end{dfn}
\begin{rmk}
Then we have $\M_n^{I} \cong \M_n$ for any $I \subseteq \{1,...,n\}$. Hence, $Aut(\M_n) = Aut(\M_n^I)$.
\end{rmk}

We will also need the following lemmas to prove the next theorem.
\begin{lem}\label{EPfornorder}
Let $\M_n$ be the universal $n$-linear order and $a_1,...,a_n,c_1,...,c_n \in \M_n$ be such that $a_i<_ic_i$ for all $i=1,...,n$. Then there exists $b \in \M_n$ such that $a_i<_ib<_ic_i$ for all $i=1,...,n$.
\end{lem}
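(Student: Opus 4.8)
The plan is to prove Lemma~\ref{EPfornorder} directly from the Extension Property applied to $\M_n$, exactly in the spirit of Lemma~\ref{epininterval}. Concretely, I would take the finite substructure $\{a_1,\ldots,a_n,c_1,\ldots,c_n\} \subseteq \M_n$ and exhibit a one-point extension $A \cup \{b\}$ inside $\cl$ (the age of $\M_n$, namely all finite structures with $n$ linear orders) in which $a_i <_i b <_i c_i$ holds for every $i$; then the Extension Property hands us a copy of this extension inside $\M_n$ fixing $A$ pointwise, which gives the required $b \in \M_n$.

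The one and only thing to check is that such a finite $\lan$-structure actually lies in $\cl$, i.e.\ that the prescribed relations between $b$ and the points of $A$ can be completed to a genuine list of $n$ linear orders on $A \cup \{b\}$. For each $i$ separately this is clear: on the finite chain $(A,<_i)$ we are told $a_i <_i c_i$, so we may insert $b$ into the $<_i$-order anywhere strictly between $a_i$ and $c_i$ (for instance immediately above $a_i$ in $<_i$), and any total order on $A$ with one new element inserted at a chosen position is again a total order. Doing this independently for each $i = 1,\ldots,n$ produces $n$ linear orders on $A \cup \{b\}$, hence a structure in $\cl$ that restricts to $A$ and satisfies $a_i <_i b <_i c_i$ for all $i$. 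There is no amalgamation-style compatibility constraint to worry about here precisely because $\cl$ is the class of \emph{all} finite $n$-linear-order structures, so the orders are completely independent of one another.

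The main (very mild) obstacle is simply articulating "insert $b$ strictly between $a_i$ and $c_i$ in $<_i$'' cleanly: since $a_i <_i c_i$, either $c_i$ is the $<_i$-successor of $a_i$ in the finite set $A$, in which case $b$ goes into the new gap between them, or there is already an element of $A$ strictly $<_i$-between them, in which case we just put $b$ adjacent to $a_i$ on its $<_i$-larger side. In both cases the resulting relation is a linear order on $A \cup \{b\}$ extending $<_i{\restriction}A$, and $a_i <_i b <_i c_i$ holds. After the embedding given by the Extension Property we relabel the image of $b$ as the desired element of $\M_n$, completing the proof.

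Note this lemma is the analogue for $\M_n$ of Lemma~\ref{epininterval} (which treated a single order together with an $\lan$-type); here the role of the $\lan$-type is played by the remaining $n-1$ orderings, and the argument is purely a statement about finite linear orders plus the Extension Property, requiring no independence relation.
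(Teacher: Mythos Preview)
Your proposal is correct and is essentially the same as the paper's proof: both take the finite substructure $A=\{a_1,\ldots,a_n,c_1,\ldots,c_n\}$, build a one-point extension $A\cup\{b'\}$ in $\cl$ with $a_i<_i b'<_i c_i$ for each $i$, and then invoke the Extension Property to realise this inside $\M_n$. The paper simply asserts that such an extension lies in $\cl$, whereas you spell out the (easy) reason---namely that the $n$ orders are independent, so $b$ can be inserted into each $<_i$ separately---which is a harmless elaboration.
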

\begin{proof}
This follows from the Extension Property: we can embed $a_1...a_nc_1...c_n\\ \in \M_n$ into $a_1...a_nc_1...c_nb' \in \cl$ such that $a_i<_ib<_ic_i$ for all $i=1,...,n$, where $\cl$ is the class of all finite $n$-linearly ordered structures. Then by the Extension Property, we can embed $a_1...a_nc_1...c_nb'$ into $\M_n$. Hence, there exists $b \in \M_n$ such that $a_i<_ib<_ic_i$ for all $i=1,...,n$.
\end{proof}

\begin{lem}
Suppose $g,g' \in Aut(\M_n)$ have a single $+$-orbital on $<_i$ for some $i=1,...,n$. Then 
\begin{enumerate}[(i)]
\item $g'g$ has a single $+$-orbital on $<_i$.
\item Any product of conjugates of $g$ has a single $+$-orbital on $<_i$.
\end{enumerate}
Similarly, any product of conjugates of automorphisms that has a single $-$-orbital on $<_i$ has a single $-$-orbital on $<_i$.
\end{lem}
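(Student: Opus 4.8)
The plan is to first unwind the statement ``$g$ has a single $+$-orbital on $<_i$'' and then prove (i) by a short monotonicity induction, from which (ii) follows formally. All the orbital terminology, and Remark~\ref{orbitalrmk} in particular, use only the linear order and the induced order-automorphism, so they apply to the linearly ordered set $(\M_n,<_i)$ and to the restriction of $g$ to it. Thus the hypothesis on $g$ (and, identically, on $g'$) says precisely: $gx>_i x$ for every $x\in\M_n$, and for some---hence, by Remark~\ref{orbitalrmk}, for every---$a\in\M_n$ the sequence $(g^k a)_{k\in\mathbb{Z}}$ is $<_i$-unbounded above and below. Since $gx>_i x$ for all $x$, this sequence is strictly $<_i$-increasing in $k$, so $(g^k a)_{k\ge 0}$ is $<_i$-cofinal and $(g^k a)_{k\le 0}$ is $<_i$-coinitial; note also $g^{-1}x<_i x$ for all $x$. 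To prove (i) I must show that $g'g$ has the same two features: $(g'g)x>_i x$ for all $x$, and the orbit of some point under $g'g$ is $<_i$-cofinal and $<_i$-coinitial.

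The first feature is immediate: $(g'g)x=g'(gx)>_i gx>_i x$. For the second, fix $a$ and prove by induction on $k\ge 0$ that $(g'g)^k a\ge_i g^k a$; the inductive step uses that $g$ is $<_i$-order-preserving (so $g((g'g)^k a)\ge_i g(g^k a)=g^{k+1}a$) together with $g'y>_i y$ for all $y$. Dually, writing $(g'g)^{-1}=g^{-1}g'^{-1}$ and using $g'^{-1}y<_i y$, one checks $(g'g)^{-1}y<_i g^{-1}y$ for all $y$; then induction on $k\ge 0$, using that $g^{-1}$ is $<_i$-order-preserving, gives $(g'g)^{-k}a\le_i g^{-k}a$. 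Since $(g^k a)_{k\ge 0}$ is $<_i$-cofinal and $(g^{-k}a)_{k\ge 0}$ is $<_i$-coinitial, so are $((g'g)^k a)_{k\ge 0}$ and $((g'g)^{-k}a)_{k\ge 0}$; hence $g'g$ has a single $+$-orbital on $<_i$. The argument is symmetric in $g$ and $g'$, so $gg'$ works as well.

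For (ii): any $f\in Aut(\M_n)$ preserves $<_i$, so a conjugate $fgf^{-1}$ satisfies $(fgf^{-1})x=f(g(f^{-1}x))>_i f(f^{-1}x)=x$ for all $x$, and $f$ carries the $<_i$-cofinal-and-coinitial sequence $(g^k a)_k$ to $((fgf^{-1})^k(fa))_k$, which is again $<_i$-cofinal and coinitial; thus $fgf^{-1}$ has a single $+$-orbital on $<_i$. Now induct on the number of factors: a product of $m$ conjugates of $g$ is the product of the first $m-1$ (which has a single $+$-orbital on $<_i$ by the inductive hypothesis) composed with one conjugate (which does, by the previous sentence), so by (i) the whole product does too. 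The same argument, verbatim, gives the more general claim that a product of conjugates of automorphisms each having a single $+$-orbital on $<_i$ again has one, and the stated $-$-orbital version is just this applied inside $\M_n^{\{i\}}$ --- which has the same automorphism group as $\M_n$ and in which $<_i$ is reversed, so that $-$-orbitals on $<_i$ become $+$-orbitals.

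There is no genuinely hard step here: the only thing to watch is the reformulation in the first paragraph --- checking that the packaged hypothesis really yields $gx>_i x$ everywhere together with cofinal and coinitial orbits --- and keeping the two inductive comparisons correctly oriented at the two ends. In particular Lemma~\ref{EPfornorder} is not needed for this lemma; it is used later.
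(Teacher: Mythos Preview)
Your proposal is correct and follows essentially the same strategy as the paper: reduce ``single $+$-orbital on $<_i$'' to the two conditions $gx>_i x$ everywhere and cofinal/coinitial orbit, then check these are preserved under product and conjugation. The only cosmetic difference is that for (i) the paper argues via overlapping intervals $(a_j,g'ga_j)_i\cap(a_{j+1},g'ga_{j+1})_i\neq\emptyset$ along the $g$-orbit $a_j=g^ja$, while you bound the $g'g$-orbit directly by the $g$-orbit via the inductive comparisons $(g'g)^k a\ge_i g^k a$ and $(g'g)^{-k}a\le_i g^{-k}a$; these are equivalent elementary arguments.
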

\begin{proof}
Since $g$ has a single $+$-orbital on $<_i$, there exists $(a_j)_{j \in \mathbb{Z}}$ such that $ga_j=a_{j+1}>_ia_j$ and $\cup_{j \in \mathbb{Z}} (a_j,a_{j+1})_i=\M_n$. 
\begin{enumerate}[(i)]
\item Then for any $j \in \mathbb{Z}$, since $g'x>_ix$ for all $x \in \M_n$, we have $g'g a_j =g'a_{j+1}>_ia_{j+1}$. Hence $(a_j,g'ga_{j})_i$ overlaps with $(a_{j+1},g'ga_{j+1})_i$ for all $j \in \mathbb{Z}$. Since $\cup_{j \in \mathbb{Z}} (a_j,a_{j+1})_i=\M_n$, $g'g$ has a single $+$-orbital on $<_i$. 
\item Let $h \in Aut(M_n)$. Then for all $j \in \mathbb{Z}$, $hgh^{-1}(ha_j)=hga_j=ha_{j+1}$. Since $a_{j+1}>_ia_j$, we have $ha_{j+1}>_iha_j$. Since $\cup_{j \in \mathbb{Z}} (a_j,a_{j+1})_i=\M_n$, we also have  $\cup_{j \in \mathbb{Z}} (ha_j,ha_{j+1})_i=\M_n$. Hence $hgh^{-1}$ has a single $+$-orbital on $<_i$. By part (i), any product of conjugates of $g$ has a single $+$-orbital on $<_i$.
\end{enumerate}
\end{proof}

To simplify some notions in the following proof, we denote $(a,b)_i$ for some $a,b\in \M_n$ and $i \in \{1,...,n\}$ to be the open interval $(a,b)$ on $<_i$ if $a<_ib$ and $(b,a)$ on $<_i$ if $b<_ia$.

\begin{thm}\label{universallinearorder}
Let $g \in Aut (\M_n)$, $n\geq 2$, be non-trivial. Then there exists $f$, a product of conjugates of $g$ and $g^{-1}$, and $I\subseteq \{1,...,n\}$ such that $f$ has a single $+$-orbital on $<_j$ for all $j=1,...,n$ in $\M_n^I$. 
\end{thm}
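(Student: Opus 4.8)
The plan is to reduce to the results of Section 2 applied one coordinate at a time. Since $g$ is non-trivial, by Lemma \ref{nofixedset} (applied to the universal $n$-linear order, which is non-trivial and has $Aut \neq Sym$) $g$ does not fix any open interval pointwise on $<_1$; so $g$ is unbounded with respect to $<_1$ in the sense of Section 2. Hence by Theorem \ref{oneorbitalgeneral} (whose proof only uses the order $<_1$ and the Extension Property, both available here) there is a product $f_1$ of conjugates of $g$ and $g^{-1}$ which has a single $+$-orbital on $<_1$ in $\M_n^{I_1}$ for a suitable $I_1 \subseteq \{1\}$, i.e.\ after possibly reversing $<_1$. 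Note $f_1 \in Aut(\M_n) = Aut(\M_n^{I_1})$.

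Now I would iterate. Suppose inductively that for some $1 \leq k < n$ we have produced $f_k$, a product of conjugates of $g$ and $g^{-1}$, and $I_k \subseteq \{1,\dots,k\}$, such that in $\M_n^{I_k}$ the automorphism $f_k$ has a single $+$-orbital on $<_j$ for every $j \leq k$. Consider the coordinate $<_{k+1}$. If $f_k$ is not unbounded on $<_{k+1}$ — i.e.\ if it fixes some open $<_{k+1}$-interval pointwise — then by Lemma \ref{nofixedset} it would be trivial, contradicting that $f_k$ moves points (it has a single $+$-orbital on $<_1$, so it is non-trivial). Hence $f_k$ is unbounded on $<_{k+1}$, and by Theorem \ref{oneorbitalgeneral} applied with the order $<_{k+1}$ there is $h \in Aut(\M_n)$ such that $f_{k+1} := $ (the relevant product of conjugates of $f_k$ and $f_k^{-1}$ built in that theorem's proof) has a single $+$-orbital on $<_{k+1}$, possibly after reversing $<_{k+1}$; set $I_{k+1} = I_k$ or $I_k \cup \{k+1\}$ accordingly. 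The point is that $f_{k+1}$ is a product of conjugates of $f_k$ and $f_k^{-1}$, and by part (ii) of the preceding lemma (using that conjugation and composition preserve the property of having a single $+$-orbital on a fixed $<_j$, once we are in the orientation $\M_n^{I_{k+1}}$ where all the maps $f_k, f_k^{-1}$, and their conjugates already have single $+$-orbitals on $<_j$ for $j \leq k$) $f_{k+1}$ still has a single $+$-orbital on $<_j$ for every $j \leq k$. So $f_{k+1}$ has a single $+$-orbital on $<_j$ for all $j \leq k+1$ in $\M_n^{I_{k+1}}$. Since $f_{k+1}$ is also a product of conjugates of $g$ and $g^{-1}$ (substitute the expression of $f_k$ in terms of $g$), the induction continues. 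Taking $f = f_n$ and $I = I_n$ finishes the proof.

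The main subtlety — the step I expect to require the most care — is the compatibility of the two "single $+$-orbital" properties across the inductive step: when I apply Theorem \ref{oneorbitalgeneral} in the $(k+1)$-st coordinate I must check that the construction there (which forms commutators and conjugates by certain $h$ built via back-and-forth) does not destroy the single $+$-orbital structure already obtained on $<_1,\dots,<_k$. This is exactly what part (ii) of the lemma on products of conjugates is for, but one must verify that after passing to the orientation $\M_n^{I_{k+1}}$ every building block ($f_k$, $f_k^{-1}$, and their conjugates $h f_k^{\pm 1} h^{-1}$) genuinely has a single $+$-orbital (not a single $-$-orbital) on each $<_j$, $j \leq k$; the inverse of an automorphism with a single $+$-orbital on $<_j$ has a single $-$-orbital, so one should be slightly careful, but a single $-$-orbital on $<_j$ is a single $+$-orbital on the reversed order, and the lemma's final sentence covers products of conjugates of such maps — so the property "$(g^i x)_{i \in \mathbb Z}$ is unbounded above and below on $<_j$", which is what "single orbital" ultimately amounts to (Remark \ref{orbitalrmk}), is preserved by conjugation and composition regardless of sign. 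Thus it is cleanest to phrase the induction in terms of "$f_k$ has a single orbital on $<_j$" (unbounded above and below) for $j \leq k$, and only at the very end choose $I$ so that all these orbitals become $+$-orbitals simultaneously.
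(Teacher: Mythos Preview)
Your overall plan is the same as the paper's---induct on the coordinate, using Section~2 on $<_{k+1}$ while preserving the single-orbital property on $<_1,\dots,<_k$---but the step you flag as ``the main subtlety'' is a genuine gap, and neither of the two fixes you propose works.

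The issue is that Theorem~\ref{oneorbitalgeneral} does not always produce a product of conjugates of $f_k$ \emph{alone}: in the two ``mixed'' cases (those handled via Propositions~\ref{2orbitals} and~\ref{oneorbitalcase2}) the output is a commutator $[q,k]$, which necessarily involves both $q$ and $q^{-1}$. Your first suggested fix---passing to an orientation $\M_n^{I_{k+1}}$ in which $f_k$, $f_k^{-1}$ and their conjugates \emph{all} have single $+$-orbitals on $<_j$---is impossible: in any fixed orientation, if $f_k$ has a single $+$-orbital on $<_j$ then $f_k^{-1}$ has a single $-$-orbital there, and no reversal of $<_j$ makes both signs $+$. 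Your second suggested fix---tracking only ``has a single orbital'' (sign-free) and noting this is preserved by conjugation and composition---fails for composition: a map with a single $+$-orbital composed with one having a single $-$-orbital need not have a single orbital at all (e.g.\ $f_k\cdot f_k^{-1}=\mathrm{id}$, and more generally $h f_k^{-1}h^{-1}\cdot f_k$ can have fixed points and orbitals of both parities). So the preceding lemma, which only covers products of conjugates of maps all of the \emph{same} sign, cannot be invoked here.

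The paper deals with exactly this point by splitting the inductive step into four cases according to the behaviour of $h$ on $<_m$. In the two ``easy'' cases the output of Proposition~\ref{oneorbitalcase1} is $h\,k_1 h\,k_1^{-1}$, a product of conjugates of $h$ only, and then the lemma applies verbatim to preserve the single $+$-orbital on $<_1,\dots,<_{m-1}$. In the two ``hard'' cases the paper does \emph{not} invoke Proposition~\ref{oneorbitalcase2} as a black box; instead it first forms $k_2=k_1^{-1}hk_1h$ (again conjugates of $h$ only, so the lemma gives a single $+$-orbital on $<_1,\dots,<_{m-1}$), and then redoes the back-and-forth construction of $k_3$ by hand, choosing the witnesses $a_i,b_i,c_i$ so that $[k_2,k_3]a_i=a_{i+1}>_j a_i$ is arranged simultaneously for all $j=1,\dots,m$. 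The point is that the back-and-forth has enough freedom to control all $m$ orders at once; this is where the extra work lies, and it cannot be replaced by an appeal to the lemma.
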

\begin{proof}
We prove the theorem by induction on $n$. For the inductive base, by Theorem \ref{oneorbitalgeneral}, there exists $h' \in Aut(\M_n)$, a product of conjugates of $g$ and $g^{-1}$, such that $h'$ has a single $+$-orbital on $<_1$ in $\M_n^{I_1}$, where $I_1=\emptyset$.

Now for the inductive step, by the inductive hypothesis, suppose $h\in Aut(\M_n^{I_{m-1}})$ is such that $h$ can be written as a product of conjugates of $g$ and $g^{-1}$ and has a single $+$-orbital on $<_j$ for all $j=1,...,m-1$ in $\M_n^{I_{m-1}}$ for some $I_{m-1} \subseteq \{1,...,m-1\}$. Since $\M_n^{I} \cong \M_n$, we may assume without loss of generality, that $I_{m-1}=\emptyset$. We will show that there exists $k\in Aut(\M_n)$, a product of conjugates of $h$ and $h^{-1}$, such that $k$ has a single $+$-orbital on $<_j$ for all $i=1,...,m$ in $\M_n^{I_{m}}$ for some $I_{m} \subseteq \{1,...,m-1\}$. There are four possible cases to consider. 

Case I. For any $x\in \M_n$, there exists $y<_mx$ and $z>_mx$ such that $hy>_my$ and $hz>_mz$.

In this case, we can apply Proposition \ref{oneorbitalcase1} and find $k_1 \in Aut(\M_n)$ such that $hk_1hk_1^{-1}$ has a single $+$-orbital on $<_m$. By the previous lemma, $hk_1hk_1^{-1}$ has a single $+$-orbital on $<_j$ for $j=1,...,m-1$. Then $k=hk_1hk_1^{-1}$ has a single $+$-orbital on $<_j$ for all $i=1,...,m$ in $\M_n$, where $I_{m}=I_{m-1}=\emptyset$.

Case II. For any $x\in \M_n$, there exists $y<_mx$ and $z>_mx$ such that $hy<_my$ and $hz<_mz$.

Let $I_m=\{m\}$. Then for any $x\in \M_n^{I_{m}}$, there exists $y<_mx$ and $z>_mx$ such that $hy>_my$ and $hz>_mz$. We can again apply Proposition \ref{oneorbitalcase1} to $h$ and find $k_1 \in Aut(\M_n^{I_{m}})$ such that $hk_1hk_1^{-1}$ has a single $+$-orbital on $<_m$. By the same argument as in case I, $k=hk_1hk_1^{-1}$ has a single $+$-orbital on $<_j$ for all $i=1,...,m$ in $\M_n^{I_{m}}$.

Case III. For any $x\in \M_n$, there exists $y>_mx$ and $z<_mx$ such that $hy<_my$ and $hz>_mz$. 

Let $I_m=I_{m-1}=\emptyset$. By Proposition \ref{2orbitals}, we can find $k_1 \in Aut(\M_n)$ such that $h^{-1}k_1^{-1}h^{-1}k_1$ has a $+$-orbital unbounded above and a $-$-orbital unbounded below with respect to $<_m$. Let $k_2=k_1^{-1}hk_1h$. Then there exists $y,z \in \M_n$ such that $k_2a<_ma$ for all $a>_my$ and $k_2a>_ma$ for all $a<_mz$. By the previous lemma, we also have that $k_2$ has a single $+$-orbital on $<_j$ for all $j=1,...,m-1$. We want to construct $k_3$ using the back-and-forth method such that $[k_2,k_3]$ has a single $+$-orbital on $<_j$ for all $j=1,...,m$. We start with $\tilde{k}$ as the empty map and extend $\tilde{k}$ so it approximates to $k_3$.

\begin{figure}[h]
\centering
\begin{tikzpicture}
  \draw (0,0) node (1a){}
++(0.5,0) node[draw,circle,fill=black,minimum size=4pt,inner sep=0pt] (1) {}
++(0,0) node[above] (1n)  {$a_{-s-1}$}
     ++(1.0,0) node[draw,circle,fill=black,minimum size=4pt,inner sep=0pt] (2){} 
++(0,0) node[below] (2n)  {$a_{-s}$}
  ++(1.0,0) node[draw,circle,fill=black,minimum size=4pt,inner sep=0pt] (3) {}
     ++(0,0) node[above] (3x)  {$k_2a_{-s}$}
   ++(1.5,0) node[draw,circle,fill=black,minimum size=4pt,inner sep=0pt] (4) {}
   ++(0,0) node[above] (4n)  {$a_{-1}$}
     ++(0.9,0) node[draw,circle,fill=black,minimum size=4pt,inner sep=0pt] (5) {}
     ++(0,0) node[above] (5n)  {$a_0$}
  ++(1.3,0) node[draw,circle,fill=black,minimum size=4pt,inner sep=0pt] (7) {}
     ++(0,0) node[above] (7n)  {$a_1$}
     ++(0,0) node[below]  {$=k_2^{-1}c_0$}
  ++(1.5,0) node[draw,circle,fill=black,minimum size=4pt,inner sep=0pt] (6) {}
     ++(0,0) node[above] (6n)  {$k_2a_{0}$}
 ++(0.9,0) node[draw,circle,fill=black,minimum size=4pt,inner sep=0pt] (8)  {}
    ++(0,0) node[above] (8n)  {$c_{0}$}
 ++(1.2,0) node[draw,circle,fill=black,minimum size=4pt,inner sep=0pt] (10)  {}
    ++(0,0) node[above] (10n)  {$a_{s}$}
++(1.2,0) node[draw,circle,fill=black,minimum size=4pt,inner sep=0pt] (11)  {}
    ++(0,0) node[above] (11n)  {$c_{s}$}
        ++(0.8,0) node (11a)  {}
++(0,-2) node (12a)  {}
++(-1.1,0) node[draw,circle,fill=black,minimum size=4pt,inner sep=0pt] (12){} 
++(0,0) node[below] (12n)  {$k_2b_s$}
++(-1.2,0) node[draw,circle,fill=black,minimum size=4pt,inner sep=0pt] (13){} 
++(0,0) node[below] (13n)  {$b_s$}
++(-1.8,0) node[draw,circle,fill=black,minimum size=4pt,inner sep=0pt] (15){} 
++(0,0) node[below] (15n)  {$k_2b_0$}
++(-1.5,0) node[draw,circle,fill=black,minimum size=4pt,inner sep=0pt] (16){} 
++(0,0) node[below] (16n)  {$b_{-1}$}
++(-1.3,0) node[draw,circle,fill=black,minimum size=4pt,inner sep=0pt] (17){} 
++(0,0) node[below] (17n)  {$b_{0}$}
++(-1.2,0) node[draw,circle,fill=black,minimum size=4pt,inner sep=0pt] (18){} 
++(0,0) node[below] (18n)  {$k_2^{-1}b_{-1}$}
++(-1.5,0) node[draw,circle,fill=black,minimum size=4pt,inner sep=0pt] (19){} 
++(0,0) node[below] (19n)  {$b_{-s-1}$}
++(-1.2,0) node[draw,circle,fill=black,minimum size=4pt,inner sep=0pt] (20){} 
++(0,0) node[below] (20n)  {$k^{-1}_2b_{-s-1}$}
++(-1.0,0) node (19a){};

\draw[->] (1) -- (20) node[draw=none,fill=none,font=\scriptsize,midway,right]{};
\draw[->] (3) -- (19) node[draw=none,fill=none,font=\scriptsize,midway,right]{};
\draw[->] (5) -- (17) node[draw=none,fill=none,font=\scriptsize,midway,right]{};
\draw[->] (4) -- (18) node[draw=none,fill=none,font=\scriptsize,midway,right]{};
\draw[->] (8) -- (15) node[draw=none,fill=none,font=\scriptsize,midway,right]{};
\draw[->] (6) -- (16) node[draw=none,fill=none,font=\scriptsize,midway,right]{};
\draw[->] (10) -- (13) node[draw=none,fill=none,font=\scriptsize,midway,right]{};
\draw[->] (11) -- (12) node[draw=none,fill=none,font=\scriptsize,midway,right]{};
\draw[-] (1a) -- (11a) node[draw=none,fill=none,font=\scriptsize,midway,right]{};
\draw[-] (19a) -- (12a) node[draw=none,fill=none,font=\scriptsize,midway,right]{};
\end{tikzpicture}
\caption{Construction of $k_3$ on $<_j$, where $j=1,...,m-1$} \label{fig:c3o1}
\begin{tikzpicture}
  \draw (0,0) node (1a) {}
     ++(0.5,0) node[draw,circle,fill=black,minimum size=4pt,inner sep=0pt] (1){} 
   ++(0,0) node[above] (1n)  {$a_{-s-1}$}
     ++(0.8,0) node[draw,circle,fill=black,minimum size=4pt,inner sep=0pt] (2){} 
   ++(0,0) node[above] (2n)  {$a_{-s}$}
   ++(1.0,0) node[draw,circle,fill=black,minimum size=4pt,inner sep=0pt] (3)  {}
++(0,0) node[above] (3n)  {$k_2a_{-s}$}
   ++(1.5,0) node[draw,circle,fill=black,minimum size=4pt,inner sep=0pt] (4) {}
   ++(0,0) node[above] (4n)  {$a_{-1}$}
  ++(0.8,0.5) node (6n)  {}
  ++(0,-3) node (6m) {}
  ++(1,0) node (7m) {}
 ++(0,3) node (7n)  {}
  ++(-0.2,-.5) node[above] (7a)  {$y$}
    ++(-1,0) node[above] (6a)  {$z$}
 ++(2.2,0) node[draw,circle,fill=black,minimum size=4pt,inner sep=0pt] (5)  {}
    ++(0,0) node[above] (8n)  {$k_2a_0$}
     ++(0.8,0) node[draw,circle,fill=black,minimum size=4pt,inner sep=0pt] (8) {}
     ++(0,0) node[above] (5n)  {$c_{0}$}
 ++(0.8,0) node[draw,circle,fill=black,minimum size=4pt,inner sep=0pt] (9)  {}
    ++(0,0) node[above] (9n)  {$a_{0}$}
     ++(1.0,0) node[draw,circle,fill=black,minimum size=4pt,inner sep=0pt] (20) {}
     ++(0,0) node[below]  {$=k_2^{-1}c_0$}
     ++(0,0) node[above]  {$a_{1}$}
 ++(1.0,0) node[draw,circle,fill=black,minimum size=4pt,inner sep=0pt] (10)  {}
    ++(0,0) node[above] (10n)  {$c_{s}$}
 ++(0.6,0) node[draw,circle,fill=black,minimum size=4pt,inner sep=0pt] (21)  {}
    ++(0,0) node[above] (21n)  {$a_s$}
 ++(0.9,0) node[draw,circle,fill=black,minimum size=4pt,inner sep=0pt]   {}
     ++(0,0) node[below]  {$=k_2^{-1}c_s$}
     ++(0,0) node[above]  {$a_{s+1}$}
        ++(0.5,0) node (11a)  {}
++(0,-2) node (12a)  {}
++(-0.8,0) node[draw,circle,fill=black,minimum size=4pt,inner sep=0pt] (12){} 
++(0,0) node[below] (12n)  {$b_s$}
++(-1.2,0) node[draw,circle,fill=black,minimum size=4pt,inner sep=0pt] (13){} 
++(0,0) node[below] (13n)  {$k_2b_s$}
++(-1.9,0) node[draw,circle,fill=black,minimum size=4pt,inner sep=0pt] (14){} 
++(0,0) node[below] (14n)  {$b_{0}$}
++(-1.5,0) node[draw,circle,fill=black,minimum size=4pt,inner sep=0pt] (15){} 
++(0,0) node[below] (15n)  {$k_2b_{0}$}
++(-2.7,0) node[draw,circle,fill=black,minimum size=4pt,inner sep=0pt] (18){} 
++(0,0) node[below] (18n)  {$b_{-1}$}
++(-1.0,0) node[draw,circle,fill=black,minimum size=4pt,inner sep=0pt] (19){} 
++(0,0) node[below] (19n)  {$k_2^{-1}b_{-1}$}
++(-1.0,0) node[draw,circle,fill=black,minimum size=4pt,inner sep=0pt] (22){} 
++(0,0) node[below] (22n)  {$b_{-s-1}$}
++(-1.2,0) node[draw,circle,fill=black,minimum size=4pt,inner sep=0pt] (23){} 
++(0,0) node[below] (22n)  {$k^{-1}_2b_{-s-1}$}
++(-0.9,0) node (19a){};

\draw[->] (1) -- (23) node[draw=none,fill=none,font=\scriptsize,midway,right]{};
\draw[->] (3) -- (22) node[draw=none,fill=none,font=\scriptsize,midway,right]{};
\draw[->] (8) -- (15) node[draw=none,fill=none,font=\scriptsize,midway,right]{};
\draw[->] (4) -- (19) node[draw=none,fill=none,font=\scriptsize,midway,right]{};
\draw[->] (5) -- (18) node[draw=none,fill=none,font=\scriptsize,midway,right]{};
\draw[->] (10) -- (13) node[draw=none,fill=none,font=\scriptsize,midway,right]{};
\draw[->] (9) -- (14) node[draw=none,fill=none,font=\scriptsize,midway,right]{};
\draw[->] (21) -- (12) node[draw=none,fill=none,font=\scriptsize,midway,right]{};
\draw[-] (1a) -- (11a) node[draw=none,fill=none,font=\scriptsize,midway,right]{};
\draw[-] (19a) -- (12a) node[draw=none,fill=none,font=\scriptsize,midway,right]{};
\draw[-] (6n) -- (6m) node[draw=none,fill=none]{};
\draw[-] (7n) -- (7m) node[draw=none,fill=none]{};
\end{tikzpicture}
\caption{Construction of $k_3$ on $<_m$} \label{fig:c3o2}
\end{figure}

List all elements of $\M_n$ as $w_0,w_1,...$. Choose $a_0>_m y,x_0$. By Lemma \ref{EPfornorder}, we can find $b_0 \in \M_n$ such that $a_0<_jb_0<_j k_2a_0$ for $j=1,...,m-1$ and $b_0 >_m y,x_0$. Then $k_2b_0<_mb_0$ and $(a_0,k_2a_0)_j \cap (b_0,k_2b_0)_j\neq \emptyset$ for all $j=1,...,m-1$. Extend $\tilde{k}$ by sending $a_0$ to $b_0$. Since $\tilde{k}^{-1}\cdot tp(k_2b_0/b_0) \cup \{x >_{j} k_2a_0|j=1,...,m\}$ is consistent, we can find $c_0 >_{j} k_2a_0$ for all $j=1,...,m$ realising $\tilde{k}^{-1}\cdot tp(k_2b_0/b_0)$. Extend $\tilde{k}$ by sending $c_0$ to $k_2b_0$. Let $a_1:=k_2^{-1}c_0$. Then for all $j=1,...,m$, we have $a_1=k_2^{-1}c_0=[k_2,\tilde{k}] a_0>_{j}a_0$ since $c_0 >_{j} k_2a_0$. Similarly, since $\tilde{k}\cdot tp(k_2a_0/c_0a_0) \cup \{ x<_mz,x_0 \}$ is consistent, we can choose $b_{-1}<_mz,x_0$ realising $\tilde{k}\cdot tp(k_2a_0/c_0a_0)$. Choose $a_{-1}<_mz,x_0$ realising $\tilde{k}^{-1}\cdot  tp(k_2^{-1}b_{-1}/b_{-1}b_0k_2b_0)$. Then for all $j=1,...,m$, we have $ a_{-1}=[k_2,\tilde{k}] ^{-1} a_0<_{j} [k_2,\tilde{k}]^{-1} a_1=a_0$ since $a_0<_{j}a_1$.

Let $A_0=\{a_{-1},k_2a_0,c_0,a_0\}$ and $B_0=\{k_2^{-1}b_{-1},b_{-1},k_2b_0,b_0\}$. Since $S:=\bigcap _{j=1}^m (\min_j A_0 ,\max_jA_0)_j \cap \min_j B_0 ,\max_jB_0)_j$ is non-empty, we can let $w_0$ be the first element in the list $w_0,w_1,...$ such that $x_0$ is in $S$. Choose $y_0$ realising $\tilde{k}\cdot tp(x_0/A_0)$ and extend $\tilde{k}$ by sending $x_0$ to $y_0$. Choose $z_0$ realising $\tilde{k}^{-1}\cdot tp(x_0/y_0B_0)$ and extend $\tilde{k}$ by sending $z_0$ to $x_0$. Extend $A_0$,$B_0$ to include $x_0,z_0$ and $y_0,x_0$ respectively.

Now suppose at the $s$-th step, we have a partial isomorphism of $\M_n$, $\tilde{k}: A_{s-1} \rightarrow B_{s-1}$ satisfying
\begin{enumerate}[(i)]
\item $A_{s-1}= \{a_{-s},k_2a_{-s+1},...,c_{s-1},a_{s-1},x_0,...,x_{s-1},z_0,...,z_{s-1}\}$, \\ $B_{s-1} = \{k_2^{-1}b_{-s},b_{-s},....,k_2b_{s-1},b_{s-1},x_0,...,x_{s-1},y_0,...,y_{s-1}\}$ 
\item $\tilde{k}$ maps $a_i$ to $b_i$, $c_i$ to $k_2b_i$, $x_i$ to $y_i$ and $z_i$ to $x_i$ for all $i=0,...,s-1$ and $\tilde{k}$ maps $a_{-i}$ to $k_2^{-1}b_{-i}$ and $k_2a_{-i+1}$ to $b_{-i}$ for all $i=1,...,s$,
\item $a_i=k_2^{-1}c_{i-1}$ for $i=1,...,s$ and $[k_2,\tilde{k}] a_{i}=a_{i+1}>_ja_{i}$ for all $i=-s,...,s-1$ and $j=1,...,m$,
\item on order $<_j$, where $j=1,...,m-1$, we have 
\begin{enumerate}[(a)]
\item $a_{i}>_j k_2^{i-1}a_0$, $a_{-i}<_j k_2^{-i+1}a_0$, $b_i>_j k_2^{\lfloor\frac{i}{2}\rfloor-1}b_0$, $b_{-i}<_j k_2^{-\lfloor\frac{i}{2}\rfloor+1}b_0$ for $i=0,...,s-1$,
\item $\min_j A_{s-1}=a_{-s}<_j k_2a_{-s}<_j a_{-s+1} <_j k_2 a_{-s+1}<_j \cdots <_j a_{s-1}<_j c_{s-2}<_j a_s <_j c_{s-1} =\max_j A_{s-1}$, 
\item $\min_j B_{s-1}=k_2^{-1}b_{-s}<_j k_2^{-1} b_{-s+1}<_j b_{-s} \cdots <_j b_{s-1}<_j k_2b_{s-2}<_j k_2b_{s-1}=\max_j B_{s-1}=k_2b_{s-1}$.
\end{enumerate}
\item on $<_m$, we have 
\begin{enumerate}[(a)]
\item $a_i>_m k_2^{-\lfloor\frac{i}{2}\rfloor+1}a_0$, $a_{-i-1}<_m k_2^{-i+1}a_{-1}$, $b_{i}>_mk_2^{-i+1}b_0$, $b_{-i-1}<_mk_2^{-\lfloor\frac{i}{2}\rfloor+1}b_{-1}$ for $i=0,...,s-1$, 
\item $\min_j A_{s-1}=a_{-s}<_m k_2a_{-s}<_m a_{-s+1} <_m k_2 a_{-s+1}<_m \cdots <_m a_{s-2} <_m c_{s-1}<_m a_{s-1}=\max_m A_{s-1}$, and 
\item $\min_j B_{s-1}=k_2^{-1}b_{-s}<_m k_2^{-1} b_{-s+1}<_m b_{-s} <_m\cdots <_m k_2b_{s-1}<_m b_{s-1}=\max_j B_{s-1}$,
\end{enumerate}
\end{enumerate}

We can choose $b_s$ such that $b_s >_m k_2^{-s+1}b_0, k_2^{-1}b_{s-1}$ and $b_s$ realises $\tilde{k}\cdot tp(a_s/A_{s-1})$ since the type $\tilde{k}\cdot tp(a_s/A_{s-1}) \cup \{x>_m k_2^{-s+1}b_0, k_2^{-1}b_{s-1}\}$ is consistent. 
Extend $\tilde{k}$ by sending $a_s$ to $b_s$. 
Similarly, we can find $c_{s}$ such that $c_s>_j k_2c_{s-1}, k_2^{s+1} a_0$ for all $j=1,...,m-1$ and $c_s$ realises $\tilde{k}^{-1}\cdot tp(k_2b_s/b_sB_{s-1})$. Extend $\tilde{k}$ by sending $c_{s}$ to $k_2b_s$. Let $a_{s+1}:=k_2^{-1}c_{s}$. Then we have the following:
\begin{enumerate}[(i)]
\item for all $j=1,...,m$, we have $a_{s+1}=[k_2,\tilde{k}] a_s>_j [k_2,\tilde{k}] a_{s-1}=a_s$ since $a_s>_ja_{s-1}$.
\item Since for any $j=1,...,m-1$, $c_{s}>_j  k_2^{s+1} a_0,k_2c_{s-1}$, we have $a_{s+1}=k^{-1}_2 c_s>_j k_2^{s} a_0, c_{s-1} $. 
\item For $j=1,...,m-1$, since by the inductive hypothesis that $a_s>_j c_{s-2}$ and $\tilde{k}$ maps $a_s$ to $b_s$ and $c_{s-2}$ to $k_2b_{s-2}$, we have $b_s>_j k_2b_{s-2}$. By the inductive hypothesis that $b_{s-2}>_m k_2^{\lfloor\frac{s-2}{2}\rfloor-1}b_0$, we have $b_s>_j k_2^{\lfloor\frac{s}{2}\rfloor-1}b_0$.
\item Since $b_s>_m k_2^{-1}b_{s-1}$, we have $k_2 b_s>_m b_{s-1}$. Since $\tilde{k}$ maps $c_s$ to $k_2b_s$ and $a_{s-1}$ to $b_{s-1}$, we have $c_s>_m a_{s-1}$. By the inductive hypothesis that $a_{s-1}>_m k_2^{-\lfloor\frac{s-1}{2}\rfloor+1}a_0$, we have $a_{s+1}=k^{-1}_2 c_s >_mk^{-1}_2a_{s-1}>_mk_2^{-\lfloor\frac{s-1}{2}\rfloor}a_0$.
\end{enumerate}

Similarly, we can choose $b_{-s-1}$ such that $b_{-s-1}<_j k_2 b_{-s}, k_2^{s}b_0$ for all $j=1,...,m-1$ and $b_{-s-1}$ realises $\tilde{k}\cdot tp(k_2a_{-s}/a_s c_{s}A_{s-1})$. Extend $\tilde{k}$ by sending $k_2a_{-s}$ to $b_{-s-1}$. Choose $a_{-s-1}<_m k^{-1}_2 a_{-s}, k_2^{-s}a_0$ realising $\tilde{k} ^{-1}\cdot tp(k_2^{-1}b_{-s-1}/b_sb_{-s-1}k_2b_sB_{s-1})$. 

Choose $b_{-s-1}<_m k_2 b_{-s}$ realising $\tilde{k}\cdot tp(k_2a_{-s}/a_sc_sA_{s-1})$. Extend $\tilde{k}$ by sending $k_2a_{-s}$ to $b_{-s-1}$. Choose $a_{-s-1}$ such that $a_{-s-1}<_j k^{-1}_2 a_{-s}, k_2^{-s}a_0$ for all $j=1,...,m$ and $a_{-s-1}$ realises $\tilde{k}^{-1}\cdot  tp(k_2^{-1}b_{-s-1}/b_sb_{-s-1}k_2b_sB_{s-1})$. Extend $\tilde{k}$ by sending $a_{-s-1}$ to $k_2^{-1}b_{-s-1}$. Then we have the following for all $j=1,...,m$:
\begin{enumerate}[(i)]
\item $a_{-s-1}=[k_2,\tilde{k}] ^{-1} a_{-s}<_j [k_2,\tilde{k}] ^{-1}a_{-s+1}=a_{-s}$ since $a_{-s}<_j a_{-s+1}$.
\item By the inductive hypothesis that $k_2a_{-s}<_j a_{-s+1}$ and since $\tilde{k}$ maps $k_2a_{-s}$ to $b_{-s-1}$ and $a_{-s+1}$ to $k^{-1}_2b_{-s+1}$, we have $b_{-s-1}<_j k^{-1}_2b_{-s+1}$. By the inductive hypothesis that $b_{-s+1}<_j k_2^{-\lfloor\frac{s-1}{2}\rfloor+1}b_0$, we have $b_{-s-1}<_j k_2^{-\lfloor\frac{s+1}{2}\rfloor+1}b_0$. 
\item $k_2a_{-s-1}<_j a_{-s}$ since $a_{-s-1}<_j k^{-1}_2 a_{-s}$.
\end{enumerate}

Let $A_s=A_{s-1}\cup \{a_s,c_s,k_2a_{-s},a_{-s-1}\}$ and $B_s=B_{s-1}\cup\{k_2^{-1}b_{-s-1}, \\b_{-s-1},k_2b_s,b_s\}$. Let $x_s$ be the first element in the list $w_0,w_1,....$ such that $x_s \notin \{x_0,...,x_{s-1}\}$ and $x_s$ is in $\bigcap _{i=1}^m ( \min_iA_s, \max_iA_s)_i \cap (\min_iB_s,\max_iB_s)_i$. Find $y_s$ realising $\tilde{k}\cdot tp(x_s/A_s)$ and extend $\tilde{k}$ by sending $x_s$ to $y_s$. Find $z_s$ realising $\tilde{k}^{-1}\cdot tp(x_s/y_sB_s)$. Extend $A_s$,$B_s$ to include $x_s,z_s$ and $y_s,x_s$ respectively. Then, $\tilde{k}:A_s \rightarrow B_s$ satisfies the inductive hypothesis (i)-(v).

Let $k_3$ be the union of $\tilde{k}$ over all steps. 
Since $k_2$ has a singe $+$-orbital on $<_j$ for all $j=1,...,m-1$ and a $-$-orbital unbounded above and a $+$-orbital unbounded below on $<_m$. 
By Remark \ref{orbitalrmk}, $(k_2^ia_0)_{i\in \mathbb{Z}}, (k_2^ib_0)_{i\in \mathbb{Z}}$ are unbounded above and below on $<_j$ for all $j=1,...,m-1$. $(k_2^ia_0)_{i\in \mathbb{Z}}, (k_2^ib_0)_{i\in \mathbb{Z}}$ are unbounded above on $<_m$ and $(k_2^ia_{-1})_{i\in \mathbb{Z}}, (k_2^ib_{-1})_{i\in \mathbb{Z}}$ are unbounded below on $<_m$ 
Then by the inductive hypothesis (iv.a) and (v.a), we know that 
\[ \bigcup_{n \in \mathbb{N}} \big( \cap_{j=1}^m ( \min_{<_j} A_n, \max_{<_j} A_n)_j \cap(\min_{<_j} B_n,\max_{<_j} B_n)_j \big)=\Mlin.\] 
Thus, for any $w_i$ in the list $w_0, w_1,...$, $w_i=x_j$ for some $j \in \mathbb{N}$, i.e. $w_i$ is in the domain and image of $k_3$. Hence, $k_3$ is bijective and $k_3 \in Aut(\M_n^{I_{m}})$. Since $([k_2,k_3]^ia_0)_{i\in \mathbb{Z}}$ is unbounded above and below on $<_j$ for all $j=1,...,m$, $[k_2,k_3]$ has a single $+$-orbital on $<_j$ for all $j=1,...,m$ on $\M_n^{I_{m}}$.

Case IV.  For any $x\in \M_n^{I_{m-1}}$, there exists $y>_mx$ and $z<_mx$ such that $hy>_my$ and $hz<_mz$. 

Let $I_m=I_{m-1}\cup \{m\}$. Then for any $x\in \M_n^{I_{m}}$, there exists $y>_mx$ and $z<_mx$ such that $hy<_my$ and $hz>_mz$. By Proposition \ref{2orbitals}, we can find $k_1 \in Aut(\M_n^{I_{m}})$ such that $h^{-1}k_1^{-1}h^{-1}k_1$ has a $+$-orbital unbounded above and a $-$-orbital unbounded below with respect to $<_m$. Let $k_2=k_1^{-1}hk_1h$. Then by the same argument as in case III, there exists $k_3 \in Aut(\M_n^{I_{m}})$ such that $[k_2,k_3]$ has a single $+$-orbital on $<_j$ for all $j=1,...,m$ on $\M_n^{I_{m}}$.

Therefore, by induction, we can find $f \in Aut(\M_n^I)$, as a product of conjugate of $g$ and $g^{-1}$, such that $f$ has a single $+$-orbital on $<_j$ for all $j=1,...,n$ in $\M_n^I$ for some $I\subseteq \{1,...,n\}$. 
\end{proof}

We now show that for an automorphism $g \in \M_n$, if $ga <_{i} a$ for all $a\in \M_n$ and $i=1,...,n$, then $g$ moves almost $R$-maximally $g^{-1}$ moves almost $L$-maximally. In order to make the notions simpler in the proof of the next theorem, we can define $\bar{a}\ind^{<_k}_X \bar{b}$ if for any $a_i \in \bar{a} \setminus X$, $b_j\in\bar{b}\setminus X$ where $a_i <_{k} ga_j$, there exists $x \in X$ such that $a_i<_kx<_kb_j$. Then by definition, we can prove the following lemma.

\begin{lem}
Given $\bar{a},\bar{b} \in \M_n$, define $\bar{a}\ind^{<_k}_X \bar{b}$ if for some $a_i \in \bar{a} \setminus X$, $b_j\in\bar{b}\setminus X$, $a_i <_{k} ga_j$. Then,
\begin{enumerate}
\item $\bar{a} \ind_X \bar{b}$ if and only if $\bar{a}\ind^{<_k}_X \bar{b}$ for all $k=1,...,n$.
\item if $\bar{a} \ind^{<_k}_X \bar{b}$ and $\bar{a}' \ind^{<_k}_X \bar{b}$, then $\bar{a}\bar{a}' \ind^{<_k}_X \bar{b}$.
\item if $\bar{a} \ind^{<_k}_X \bar{b}$ and $\bar{a} \ind^{<_k}_X \bar{b}'$, then $\bar{a} \ind^{<_k}_X \bar{b}\bar{b}'$.
\end{enumerate}
\end{lem}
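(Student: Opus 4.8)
The plan is essentially trivial: all three clauses are immediate unwindings of the definitions, so I would invoke no amalgamation, no Extension Property and no back-and-forth construction — the whole lemma is a matter of quantifier bookkeeping. Before starting I would fix the standing conventions that $\bar a,\bar b$ are treated as finite subsets of $\M_n$, that concatenation $\bar a\bar a'$ means union, and — the one point worth stating explicitly — that $(\bar a\bar a')\setminus X=(\bar a\setminus X)\cup(\bar a'\setminus X)$ and likewise $(\bar b\bar b')\setminus X=(\bar b\setminus X)\cup(\bar b'\setminus X)$, so that parameters lying in $X$ are simply invisible to every relation involved. With this in hand, $\bar a\ind^{<_k}_X\bar b$ is read as the coordinate-$k$ restriction of $\ind_X$: it asserts that for every $a_i\in\bar a\setminus X$ and every $b_j\in\bar b\setminus X$ with $a_i<_kb_j$ there is a parameter $x\in X$ with $a_i<_kx<_kb_j$ (which in the case $\bar b=g\bar a$ is exactly the ``$a_i<_kga_j$'' formulation used in the theorem).

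For part (1) I would write out both sides literally. The right-hand side, ``$\bar a\ind^{<_k}_X\bar b$ for every $k\in\{1,\dots,n\}$'', says: for all $k$, for all $a_i\in\bar a\setminus X$ and $b_j\in\bar b\setminus X$ with $a_i<_kb_j$, there is $x\in X$ with $a_i<_kx<_kb_j$. The left-hand side, $\bar a\ind_X\bar b$, is by the definition of the $\mathbf{SWIR}$ on $\M_n$ precisely the same statement with the quantifier over the pairs $(a_i,b_j)$ pulled outside the quantifier over $k$. Since both quantifiers are universal, the two formulas are logically equivalent, which is the claim.

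For parts (2) and (3) I would argue by a single case distinction. To prove $\bar a\bar a'\ind^{<_k}_X\bar b$ in part (2), take any $a_i\in(\bar a\bar a')\setminus X$ and $b_j\in\bar b\setminus X$ with $a_i<_kb_j$; by the displayed set identity $a_i\in\bar a\setminus X$ or $a_i\in\bar a'\setminus X$, and in the first case the hypothesis $\bar a\ind^{<_k}_X\bar b$ supplies the separating $x\in X$, while in the second case $\bar a'\ind^{<_k}_X\bar b$ does. Part (3) is the mirror image: to prove $\bar a\ind^{<_k}_X\bar b\bar b'$ one splits on whether the witnessing right-hand element lies in $\bar b\setminus X$ or in $\bar b'\setminus X$ and applies the corresponding hypothesis.

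I do not expect any genuine obstacle. If anything, the only subtle point is notational — making sure that ``$\ind^{<_k}$'' is consistently read as the single-order restriction of $\ind$ and that adjoining or deleting parameters already in $X$ changes none of the relations — so I would spell that convention out once at the outset and then present parts (1)--(3) as the three short paragraphs above.
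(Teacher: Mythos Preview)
Your proposal is correct and matches the paper's approach: the paper does not give a proof at all, merely stating ``by definition, we can prove the following lemma,'' and your unwinding of the definitions is exactly what that sentence leaves to the reader. Your explicit remark that the lemma's restatement of $\ind^{<_k}$ should be read as the single-order separation condition (as in the paragraph preceding the lemma) is a useful clarification of what is evidently a typographical slip in the statement.
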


\begin{thm}
Let $g \in Aut(\M_n)$. If $ga <_{i} a$ for any $ a \in \M_n$ and $i=1,...,n$, $g$ moves almost $R$-maximally. Similarly if $ga >_{i} a$ for any $a \in \M_n$ and $i=1,...,n$, then $g$ moves almost $L$-maximally. 
\end{thm}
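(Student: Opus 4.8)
The plan is to establish the $R$-maximal statement — that $ga<_i a$ for all $i$ forces $g$ to move almost $R$-maximally — by a coordinate-by-coordinate construction in the spirit of Lemma~\ref{move}, and then to deduce the $L$-maximal statement by reversing all $n$ orders. Fix a finite $X\subseteq\M_n$ and a type $p=tp(x_1,\dots,x_r/X)$ over $X$; since $acl$ is trivial in $\M_n$ we may assume $x_i\notin X$ for all $i$, so it suffices to produce a realisation $\bar a=(a_1,\dots,a_r)$ of $p$ with $\bar a\cap g\bar a=\emptyset$ and $\bar a\ind_X g\bar a$. By the preceding lemma, $\bar a\ind_X g\bar a$ holds once $\bar a\ind^{<_\ell}_X g\bar a$ holds for every $\ell\in\{1,\dots,n\}$, and parts (2) and (3) of that lemma let us verify each such relation coordinate by coordinate as we build $\bar a$.

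First I would build $\bar a$ by induction on $r$. Suppose $\bar a^{k-1}=(a_1,\dots,a_{k-1})$ realises $tp(x_1,\dots,x_{k-1}/X)$, is disjoint from $g\bar a^{k-1}$, and satisfies $\bar a^{k-1}\ind^{<_\ell}_X g\bar a^{k-1}$ for all $\ell$. Using Existence I pick $a_k$ realising the forced type $tp(x_k/\bar a^{k-1}X)$ with $a_k\ind_{\bar a^{k-1}X}g\bar a^{k-1}$; then, just as in the proof of Lemma~\ref{move}, Monotonicity and Transitivity together with the inductive hypothesis separate, in every order $<_\ell$, all new pairs $(a_k,ga_j)$ with $j<k$ — a witness lying in $\bar a^{k-1}$ reduces to the inductive hypothesis — while the pair $(a_k,ga_k)$ needs nothing since $ga_k<_\ell a_k$. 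The genuinely new point, absent in the single-order Lemma~\ref{move}, is the pairs $(a_i,ga_k)$ with $i<k$: I must also choose $a_k$ so that in each order $<_\ell$ the point $ga_k$ lies below every $a_i\in\bar a^{k-1}$ lying in the same $X$-block of $<_\ell$ as $ga_k$ — equivalently, so that $a_k$ lies below $g^{-1}$ of the relevant lower bound in each order. Using the Extension Property in the form of Lemma~\ref{EPfornorder} one then checks that this extra requirement is consistent with the forced type and with $a_k\ind_{\bar a^{k-1}X}g\bar a^{k-1}$ in all $n$ orders simultaneously — the inductive invariant $\bar a^{k-1}\ind^{<_\ell}_X g\bar a^{k-1}$ being exactly what provides the needed room. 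After $r$ steps $\bar a$ realises $p$ and $\bar a\ind_X g\bar a$; since $X$ and $p$ were arbitrary, $g$ moves almost $R$-maximally.

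For the second statement I would pass to $\M_n^{\{1,\dots,n\}}\cong\M_n$: there the hypothesis $ga>_i a$ becomes $ga<_i a$ for all $i$, and for finite sets $\bar a\,\ind'_X g\bar a$ in $\M_n^{\{1,\dots,n\}}$ if and only if $g\bar a\ind_X\bar a$ in $\M_n$, where $\ind'$ is the SWIR of $\M_n^{\{1,\dots,n\}}$ built in the same way. Applying the first statement to $\M_n^{\{1,\dots,n\}}$ with $\ind'$ gives, for every finite $X$ and type $p$, a realisation $\bar a$ of $p$ with $\bar a\,\ind'_X g\bar a$, i.e.\ $g\bar a\ind_X\bar a$; hence $g$ moves almost $L$-maximally.

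The step I expect to be the main obstacle is the consistency claim in the second paragraph. In Lemma~\ref{move} one reordering of the tuple in the unique order puts $a_k$ at the bottom, so $ga_k$ falls below every earlier coordinate automatically; with $n\ge 2$ orders no reordering can make $a_k$ $<_\ell$-minimal in all orders at once, so the position of $ga_k$ must be forced order by order through Existence and the Extension Property, and one has to check that these $n$ positional constraints never clash with one another, with the type prescribed by $p$, or with $a_k\ind_{\bar a^{k-1}X}g\bar a^{k-1}$. This is precisely where the hypothesis that $g$ decreases in every order simultaneously is essential, and where the inductive invariant carried along the construction must be strong enough to propagate itself.
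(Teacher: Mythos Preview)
Your approach diverges from the paper's in a structurally important way, and the point you yourself flag as the main obstacle is where it remains incomplete.

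The paper does not try to build the realisation in a single induction over all $n$ orders at once. Instead it runs a \emph{double} induction: an outer induction on $m$, assuming a realisation $\bar b$ with $\bar b\ind^{<_i}_X g\bar b$ for all $i<m$ and producing $\bar c$ with this property for all $i\le m$; and an inner induction on the coordinates of $\bar b$, \emph{sorted by the current order $<_m$} as $b_1>_m\cdots>_m b_l$. The inner invariant is much stronger than independence: it preserves the full $<_i$-type of the tuple together with its $g$-image (and implicitly its $g^{-1}$-image) over $X$ for every $i<m$. When $b_{k+1}$ fails the $<_m$-condition, one replaces it by $c_{k+1}$ realising a type $q$ that pins down the $<_i$-position ($i<m$) of $c_{k+1}$ relative to $\bar c^k\hat b^{k+1}X$ \emph{and} to $g(\bar c^k\hat b^{k+1})$ \emph{and} to $g^{-1}(\bar c^k\hat b^{k+1}X)$, while moving it in $<_m$ just above the offending $gc_j$. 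Consistency of $q$ is immediate because its constraints live in separate orders. The pairs $(c_i,gc_{k+1})$ in order $<_m$ then come for free from the sorting ($gc_{k+1}<_m c_{k+1}<_m c_k<_m\cdots$), exactly as in Lemma~\ref{move}.

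Your single induction cannot sort by all orders simultaneously, so you must impose conditions on $ga_k$ directly. The circularity you gloss over is real: ``the $a_i$ lying in the same $X$-block of $<_\ell$ as $ga_k$'' depends on which block $ga_k$ lands in, which depends on the very choice of $a_k$ you are trying to make. Writing the requirement as ``$a_k<_\ell g^{-1}(\text{relevant lower bound})$'' does not resolve this, and you have not shown that the resulting system of constraints (one per order, interacting through the single point $a_k$ and its $g$-image) is consistent with the forced type and with $a_k\ind_{\bar a^{k-1}X}g\bar a^{k-1}$. The inductive hypothesis $\bar a^{k-1}\ind^{<_\ell}_X g\bar a^{k-1}$ does rule out some bad configurations (for instance it prevents any $ga_j$ from lying strictly above an $a_i$ inside the $\bar a^{k-1}X$-interval of $a_k$), but you still need to control what happens when $ga_k$ drops into a lower $X$-block containing other $a_i$'s, and nothing in your invariant speaks to that. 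The paper's order-by-order scheme sidesteps all of this: at each stage only one order is ``live'', and in that order the sorting trick from Lemma~\ref{move} handles the $(a_i,ga_k)$ pairs automatically, while the previously secured orders are frozen by carrying the full type relative to $g^{\pm 1}$-translates.

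Your reduction of the $L$-maximal statement to the $R$-maximal one via $\M_n^{\{1,\dots,n\}}$ is correct and is essentially what the paper means by ``a symmetric argument''.
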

\begin{proof}
Let $p=tp(\bar{x}/X)$ be an $l$-type over $X$. Without loss of generality, we may assume $x_i \notin X$ for any $x_i\in \bar{x}$. Firstly, by Lemma \ref{move}, we can find $\bar{a}$ realising $p$ such that $\bar{a} \ind^{<_1}_X g\bar{a}$. Suppose we can find $\bar{b}$ realising $p$ such that $\bar{b} \ind^{<_i}_X g\bar{b}$ for all $i=1,...,m-1$. We will show that we can find $\bar{c}$ realising $p$ such that $\bar{c} \ind^{<_i}_X g\bar{c}$ for $i=1,...,m$. Therefore, by induction on $m$, we can find $\bar{d}$ realising $p$ such that $\bar{d} \ind^{<_i} _X g\bar{d}$ for all $i=1,...,n$. Hence, by the previous lemma, we have $\bar{d} \ind _X g\bar{d}$. 

Now fix $m$ and assume $\bar{b}$ realises $p$ $\bar{b} \ind^{<_i}_X g\bar{b}$ for all $i=1,...,m-1$. List $\bar{b}$ as $b_1>_m b_2>_m ... >_m b_l$. Inductively on $k$, we change $b_k$ wherever necessary. For the inductive base, we let $c_1 =b_1$. Let $\bar{b}^{k}=(b_1,...,b_{k})$ and $\hat{b}^k=(b_{k+1},...,b_l)$. For the inductive hypothesis, suppose we now have $\bar{c}^{k}=(c_1,...,c_{k})$ satisfying the followings:
\begin{enumerate}[(i)]
\item $tp(\bar{c}^k \hat{b}^k /X)=tp(\bar{b}/X)=p$,
\item $\bar{c}^k \ind^{<_m}_X g\bar{c}^k$, and
\item $tp^{<_i}(\bar{c}^k \hat{b}^kg(\bar{c}^k\hat{b}^k)/X )=tp^{<_i}(\bar{b} g\bar{b}/ X)$ for all $i=1,...,m-1$
\end{enumerate}
We want to choose $c_{k+1}$ so that $\bar{c}^{k+1}=(\bar{c}^{k},c_{k+1})$ satisfies (i)-(iii) for $k+1$. We divide into the following two cases.

Case I. If $b_{k+1} \ind^{<_m} _X g\bar{c}^{k}$, let $c_{k+1}=b_{k+1}$. Then we automatically have (i) and (iii) for $\bar{c}^{k+1}=(\bar{c}^k,c_{k+1})$. For (ii), since $b_{k+1} \ind^{<_m} _X g\bar{c}^{k}$, $c_{k+1}=b_{k+1}$ and $\bar{c}^k \ind^{<_m}_X g\bar{c}^k$, by the previous lemma, we have $\bar{c}^k c_{k+1}\ind^{<_m}_X g\bar{c}^k$.  Since $gc_{k+1} <_m c_{k+1}<_m c_k<_m ....<_m c_1$, we also have $\bar{c}^k c_{k+1}\ind^{<_m}_X gc_{k+1}$. Therefore, by the previous lemma, we have $\bar{c}^k c_{k+1}\ind^{<_m}_X g(\bar{c}^kc_{k+1})$.

Case II. $b_{k+1} \ind^{<_m} _X g\bar{c}^{k}$ does not hold, i.e. there exists $c_i \in \bar{c}^k$ such that $b_{k+1} <_m  gc_i$ and there does not exist $x \in X$ such that $b_{k+1} <_m x<_m  gc_i$. Let $x_1,x_2$ be the lower and upper bound of $b_{k+1}$ with respect to $X$ on $<_m$. This means that there exist $c_{j-1},c_j\in\bar{c}$ such that $x_1<_mb_{k+1}<_mgc_j<_mx_2<_mgc_{j-1}$. We also have $gc_j <_mc_k$ as otherwise, as $\bar{c}^k \ind^{<_m}_X g\bar{c}^k$, there would exist $x'\in X$ such that $c_k <_mx<_m gc_j$. Since $b_{k+1}<_m c_k$ from $tp(\bar{c}^k \hat{b}^k /X)=tp(\bar{b}/X)$, we have $x_1 <_m b_{k+1}<_m c_k <_m x' <_m gc_j <_m x_2$. This contradicts that $x_1,x_2$ are the lower and upper bound of $b_{k+1}$ with respect to $X$ on $<_m$. So, the type
\begin{align*}
 q:=&\cup_{i=1}^{m-1} tp^{<_i}(b_{k+1}/\bar{c}^k\hat{b}^{k+1}Xg(\bar{c}^k\hat{b}^{k+1})g^{-1}(\bar{c}^k\hat{b}^{k+1}X )) \\
&\cup \{ gc_j<_m x<_m \min \{x_2,c_k\} _m\} \\
&\cup _{i=m+1}^n tp(b_{k+1} /\bar{c}^k\hat{b}^{k+1} X).\\
\end{align*}
is consistent since it is consistent on each $<_i$. Hence, we can choose $c_{k+1}$ realising $q$.
\begin{figure}[h]
\centering
\begin{tikzpicture}
  \draw (0,0) node (1){}

++(1.2,0) node[draw,circle,fill=black,minimum size=4pt,inner sep=0pt]  {}
 ++(0,0) node[above] {$x_1$}
     ++(1.2,0) node[draw,circle,fill=black,minimum size=4pt,inner sep=0pt] {} 
   ++(0,0) node[above] {$b_{k+1}$}
   ++(1.2,0) node[draw,circle,fill=black,minimum size=4pt,inner sep=0pt] {}
++(0,0.2) node[above] {$...$}
   ++(1.2,-0.2) node[draw,circle,fill=black,minimum size=4pt,inner sep=0pt] {}
++(0,0) node[above]  {$gc_{j}$}
     ++(1.2,0) node[draw,circle,fill=black,minimum size=4pt,inner sep=0pt] {} 
   ++(0,0) node[above] {$c_{k+1}$}
   ++(1.2,0) node[draw,circle,fill=black,minimum size=4pt,inner sep=0pt]  {}
++(0,0) node[above] {$x_2$}
   ++(1.2,0) node[draw,circle,fill=black,minimum size=4pt,inner sep=0pt] {}
++(0,0) node[above] {$gc_{j-1}$}
++(0.8,0) node (2){}
++(1,-0.3) node[above] {$<_m$};
\draw[-] (1) -- (2) node[draw=none,fill=none]{};
\end{tikzpicture}
\end{figure}

We show that $\bar{c}^{k+1}=(\bar{c}^{k},c_{k+1})$ satisfies (i)-(iii) in the inductive hypothesis:
\begin{enumerate}[(i)]
\item By construction, we have $b_l<_m...<_m   b_{k+1} <_m c_{k+1}<_m \min_m\{x_2,c_k\}\\ \leq _m c_k<_m ...<_m c_1$ and $x_1,x_2$ are the lower and upper constraint of $b_{k+1},c_{k+1}$ with respect to $X$ on $<_m$. So, we have 
\[tp^{<_m}(\bar{c}^{k}c_{k+1} \hat{b}^{k+1} /X)=tp^{<_m}(\bar{c}^{k}b_{k+1}\hat{b}^{k+1} /X).\] 
By the inductive hypothesis (i), we have 
\[tp^{<_m}(\bar{c}^{k+1} \hat{b}^{k+1} /X)=tp^{<_m}(\bar{c}^{k}b_{k+1} \hat{b}^{k+1} /X)=tp^{<_m}(\bar{b}/X).\]

\item Since $gc_j<_m c_{k+1} <_m \min\{x_2,c_k\}\leq _m x_2 <_m gc_{j-1}$, we have that for all $gc_i >_mc_{k+1}$, there exists $x_2 \in X$ such that $gc_i >_m x_2>_mc_{k+1}$. Hence, $c_{k+1} \ind^{<_m}_X g\bar{c}^k$ and thus, by the previous lemma and the inductive hypothesis that $\bar{c}^k \ind^{<_m}_X g\bar{c}^k$, we have $\bar{c}^k c_{k+1}\ind^{<_m}_X g\bar{c}^k$. Since $gc_{k+1} <_m c_{k+1}<_m c_k<_m ....<_m c_1$, we also have $\bar{c}^k c_{k+1}\ind^{<_m}_X gc_{k+1}$. Therefore, by the previous lemma, we have $\bar{c}^k c_{k+1}\ind^{<_m}_X g(\bar{c}^kc_{k+1})$, which is the same as $\bar{c}^{k+1}\ind^{<_m}_X g\bar{c}^{k+1}$

\item  For any $i=1,...,m-1$, since $c_{k+1}$ realises $q$, we have
\[ tp^{<_i}(c_{k+1}/\bar{c}^k \hat{b}^{k+1}g(\bar{c}^k\hat{b}^{k+1}) X  )=tp^{<_i}(b_{k+1}/\bar{c}^k\hat{b}^{k+1} g(\bar{c}^k\hat{b}^{k+1}) X).\]
By the inductive hypothesis, we also have that
\[ tp^{<_i}(b_{k+1}/\bar{c}^k\hat{b}^{k+1} g(\bar{c}^k\hat{b}^{k+1}) X)=tp^{<_i}(b_{k+1}/\bar{b}^k \hat{b}^{k+1}g(\bar{b}^k\hat{b}^{k+1}) X).\]
Hence, we have 
\[ tp^{<_i}(c_{k+1}/\bar{c}^k\hat{b}^{k+1} g(\bar{c}^k\hat{b}^{k+1}) X)=tp^{<_i}(b_{k+1}/\bar{b}^k\hat{b}^{k+1} g(\bar{b}^k\hat{b}^{k+1}) X).\]

We also have that
\begin{align*}
&tp^{<_i}(gc_{k+1}/\bar{c}^k \hat{b}^{k+1}g(\bar{c}^k\hat{b}^{k+1}) X ) &\\
=&g\cdot tp^{<_i}(c_{k+1}/ g^{-1}(\bar{c}^k \hat{b}^{k+1}X)\bar{c}^k \hat{b}^{k+1}))&\\
=&g\cdot tp^{<_i}(b_{k+1}/g^{-1}(\bar{c}^k \hat{b}^{k+1}X)\bar{c}^k \hat{b}^{k+1}))& \text{since $c_{k+1}$ realises $q$} \\
=& tp^{<_i}(gb_{k+1}/\bar{c}^k\hat{b}^{k+1} g(\bar{c}^k\hat{b}^{k+1}) X )&\\
=&tp^{<_i}(gb_{k+1}/\bar{b}^k\hat{b}^{k+1} g(\bar{b}^k\hat{b}^{k+1}) X)& \text{by the inductive hypothesis (iii)}
\end{align*}
Since $gc_{k+1}<_ic_{k+1}$ and $gb_{k+1}<_ib_{k+1}$, we have
\[  tp^{<_i}(c_{k+1}gc_{k+1}/\bar{c}^k\hat{b}^{k+1} g(\bar{c}^k\hat{b}^{k+1}) X)=tp^{<_i}(b_{k+1}gb_{k+1}/\bar{b}^k \hat{b}^{k+1}g(\bar{b}^k\hat{b}^{k+1}) X).\]
By rearranging the type, we have shown 
\[tp^{<_i}(\bar{c}^{k+1} \hat{b}^{k+1}g(\bar{c}^{k+1}\hat{b}^{k+1})/X)=tp^{<_i}(\bar{b} g\bar{b}/ X).\]
\end{enumerate}

Hence by induction, we can find $\bar{c}$ realising $tp(\bar{b} /X)=p$ such that $tp^{<_i}(\bar{c} g\bar{c}/X)=tp^{<_i}(\bar{b} g\bar{b}/ X)$ for $i=1,...,m-1$ and $\bar{c} \ind^{<_n}_X g\bar{c}$. For all $i=1,...,m-1$, since $\bar{b} \ind^{<_i}_X g\bar{b}$, we also have $\bar{c} \ind^{<_i}_X g\bar{c}$ by Invariance. Hence, inductively, we can find $\bar{d}$ realising $p$ such that $\bar{d} \ind^{<_i} _X g\bar{d}$ for all $i=1,...,n$. Therefore, by the previous lemma, we have $\bar{d} \ind _X g\bar{d}$. Hence, $g$ moves almost $R$-maximally.

By a symmetric argument, we can show that if $ga >_{i} a$ for any $a \in \M_n$ and $i=1,...,n$, then $g$ moves almost $L$-maximally. 

\end{proof}

Note that the results we have shown about $\M_n$ also holds for $\M_n^I$ since $\M_n \cong \M_n^I$.

\begin{proof}[Proof of part (ii) of Theorem \ref{main}]
Let $\M_n$ be the universal $n$-linear order for $n\geq 2$. Let $g$ be a non-trivial automorphism of $\M_n$. Then, by Theorem \ref{universallinearorder}, there exists $I\subseteq \{1,...,n\}$ and $h\in Aut(\M_n^I)$ such that $h$ can be written as a product of conjugates of $g$ and $g^{-1}$ and $h$ has a single $+$-orbital on $<_j$ for any $j=1,...,n$ in $\M_n^I$. Then $h^{-1}$ has a single $-$-orbital on $<_j$ for any $j=1,...,n$ in $\M_n^I$. By the previous theorem, $h$ moves almost $R$-maximally and $h^{-1}$ moves almost $L$-maximally. Therefore, by Theorem \ref{swirthm}, any element of $Aut(\M_n^I)$ can be written as a product of conjugates of $g$ and $g^{-1}$. Since $Aut(\M_n)=Aut(\M_n^I)$, any element of $Aut(\M_n^I)$ can be written as a product of conjugates of $g$ and $g^{-1}$. Thus, $Aut(\M_n)$ is simple.
\end{proof}

\printbibliography

\Addresses

\end{document}